\pgfplotsset{compat=1.16} 
\theoremstyle{plain}
\newtheorem{theorem}{Theorem}[section]
\newtheorem*{theorem*}{Theorem}
\newtheorem*{satz*}{Satz}
\newtheorem{proposition}[theorem]{Proposition}
\newtheorem{lemma}[theorem]{Lemma}
\newtheorem{corollary}[theorem]{Corollary}
\theoremstyle{definition}
\newtheorem{definition}[theorem]{Definition}
\theoremstyle{remark}
\newtheorem{remark}[theorem]{Remark}
\numberwithin{equation}{section}
\DeclareMathOperator*{\esssup}{ess\,sup}
\newcommand{\ev}{\mathbb{E}}
\newcommand{\pr}{\mathbb{P}}
\newcommand{\R}{\mathbb{R}}
\renewcommand{\P}{\mathcal{P}}
\newcommand{\F}{\mathcal{F}}
\renewcommand{\L}{\mathcal{L}}
\renewcommand{\d}{\mathrm{d}}
\newcommand{\define}{\mathpunct{:}}
\newcommand{\bb}[1]{\mathbb{#1}}
\renewcommand{\bf}[1]{\mathbf{#1}}
\renewcommand{\cal}[1]{\mathcal{#1}}
\begin{document}

\noindent
\begin{center}
    \Large
    \textbf{A Moving Boundary Problem for Brownian Particles\\ with Singular Forward-Backward Interactions}

    \vspace{1em}

    \normalsize
    Philipp Jettkant\footnote[1]{Department of Mathematics, Imperial College London, United Kingdom, \href{mailto:p.jettkant@imperial.ac.uk}{p.jettkant@imperial.ac.uk}.} \& Andreas S\o jmark\footnote[2]{Department of Statistics, London School of Economics, United Kingdom, \href{mailto:a.sojmark@lse.ac.uk}{a.sojmark@lse.ac.uk}.}

\end{center}

\vspace{1em}

\begin{abstract}
We introduce a system of Brownian particles, each absorbed upon hitting an associated moving boundary. The boundaries are determined by the conditional probabilities of the particles being absorbed before some final time horizon, given the current knowledge of the system. While the particles evolve forward in time, the conditional probabilities are computed backwards in time, leading to a specification of the particle system as a system of singular forward-backward SDEs coupled through hitting times. Its analysis leads to a novel type of tiered moving boundary problem. Each level of this PDE corresponds to a different configuration of unabsorbed particles, with the boundary and the boundary condition of a given level being determined by the solution of the preceding one. We establish classical well-posedness for this moving boundary problem and use its solution to solve the original forward-backward system and prove its uniqueness.
\end{abstract}



\section{Introduction}

We study a system of $N$ Brownian particles that evolve together with $N$ moving boundaries for a given interval of time $[0,T]$. On their own, the particles perform Brownian motion independently of each other. However, each particle is absorbed as soon as it collides with its associated boundary. The possibility of this occurrence will be what determines the evolution of the boundaries of all particles, thereby coupling the system. Thus, the interesting part of the problem lies in the specification of the moving boundaries. At the final time $T$, the positions of the boundaries are characterised by exactly the set $\mathcal{J}$ of particles that have been absorbed. The precise specification depends on the connections between the particles, expressed by a weighted adjacency matrix $\mathbf{D}=(D_{ij})_{ij}$. Given this, the moving boundary of the $i$th particle equals $\sum_{j\in \mathcal{J}} D_{ij}$ at time $T$.

If we were to let the $i$th boundary advance by the amount $D_{ij}$ upon the absorption of the $j$th particle (if it occurs), consistent with the above terminal value, we would obtain a version of the particle systems studied by Hambly, Ledger \& S{\o}jmark \cite{HamblyLedgerSojmark2019} and Nadtochiy \& Shkolnikov \cite{NadtochiyShkolnikov2020} (see also the related problems in \cite{ChayesSwindle1996, dembo2019criticality}). In this work, we instead consider an element of anticipation so that the boundaries not only reflect the realised effect of past absorptions, but also the expected effect of potential future absorptions given the current configuration of the system. At any time $t \in [0, T)$, the boundary of the $i$th particle therefore equals the weighted sum of the conditional probabilities $\mathbb{P}(\tau_j \leq T| \mathcal{F}_t)$, weighted according to $\mathbf{D}$, where $\tau_j$ is the time at which particle $j$ is absorbed and $(\mathcal{F}_t)_{t\in[0,T]}$ is the filtration generated by the $N$ Brownian motions. In particular, rather than growing monotonically upon absorptions, the boundaries advance or recede dynamically depending on the changing probabilities of these events.

\begin{figure}[htbp]
	\centering
	\begin{tikzpicture}
		\begin{axis}[
			width=9cm,
			height=8cm,
			xmin=0, xmax=9.5,
			ymin=0, ymax=2.2,
			grid=major,
			grid style={gray!30},
			xtick={0,1,2,3,4,5,6,7,8,9},
			xticklabels={0,,,,{$\tau_2$},,,,,{$T$}},
			ytick={0,1,2},
			axis lines=left,
			tick label style={font=\small}
			]
			
			\definecolor{warmorange}{rgb}{1.0, 0.6, 0.2}
			
			\addplot[
			color=warmorange,
			mark=o,
			mark size=2pt,
			line width=1pt,
			dotted,
			mark options={solid}
			] coordinates {
				(0, 1.0800)
				(1, 0.8981)
				(2, 0.6788)
				(3, 0.6187)
				(4, 0.5663)
			};
			
			\addplot[
			color=warmorange,
			mark=o,
			mark size=2pt,
			line width=1pt,
			solid,
			mark options={solid}
			] coordinates {
				(0, 0.3934)
				(1, 0.2055)
				(2, 0.4726)
				(3, 0.3173)
				(4, 0.8162)
				(5, 0.4486)
				(6, 0.6914)
				(7, 0.2963)
				(8, 0.0400)
				(9, 0.0000)
			};
			
			\definecolor{darkpurple}{rgb}{0.4, 0.2, 0.6}
			
			\addplot[
			color=darkpurple,
			mark=star,
			mark size=2pt,
			line width=1pt,
			dotted,
			mark options={solid}
			] coordinates {
				(0, 1.0800)
				(1, 1.3485)
				(2, 1.0801)
				(3, 1.3999)
				(4, 1.1019)
				(5, 1.4634)
				(6, 1.1555)
				(7, 1.5291)
				(8, 1.9870)
				(9, 1.5969)
			};
			
			\addplot[
			color=darkpurple,
			mark=star,
			mark size=2pt,
			line width=1pt,
			solid,
			mark options={solid}
			] coordinates {
				(0, 0.3934)
				(1, 0.2524)
				(2, 0.5246)
				(3, 0.4134)
				(4, 1.0000)
				(5, 1.0000)
				(6, 1.0000)
				(7, 1.0000)
				(8, 1.0000)
				(9, 1.0000)
			};
			
		\end{axis}
	\end{tikzpicture}
	\caption{An idealised picture of two particles (dotted lines) and their moving boundaries (full lines) with $D_{12}=D_{21}=1$ and $D_{11}=D_{22}=0$. At $\tau_2$, the $2$nd particle (orange, circle markers) crosses the boundary and is absorbed, so the boundary of the $1$st particle (purple, star markers) settles at $1$ from then on. The $1$st particle is not absorbed, so the boundary of the $2$nd particle instead ends up at $0$ (as opposed to $1$).}
	\label{fig:two_particles}
\end{figure}
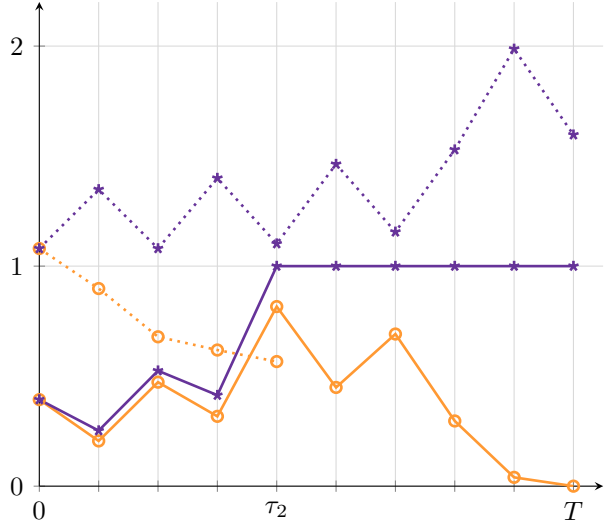

 Figure \ref{fig:two_particles} provides some intuition in a simple (idealised) setting of two particles in discrete time without self-interaction, i.e.\@ $D_{ii} = 0$ for $i = 1$, $2$. The first time step sees both boundaries recede, despite the $2$nd particle moving down: the $1$st particle going up pushes the $2$nd boundary down by enough to align with the $2$nd particle also having a lower probability of absorption, and so the 1st boundary recedes as well. At the next time step, both particles go down, so the boundaries inevitably increase. At time $\tau_2$, a large advance of the $2$nd boundary causes the $2$nd particle to be absorbed, and in turn the $1$st boundary now equals $D_{21}=1$. The $2$nd boundary continues to adjust to the evolution of the $1$st particle, but ultimately retracts towards $0$ (instead of $D_{12}=1$) as the absorption of the $1$st particle becomes improbable.

Let us now return to the general setting. Note that the conditional killing probabilities $\mathbb{P}(\tau_j \leq T| \mathcal{F}_t)$ are martingales. Hence, an equivalent way of expressing the problem is to search for $N$ moving boundaries that match the terminal conditions described above and are martingales. Clearly, their values must be resolved backwards in time, while the particles evolve forwards in time. In view of the martingale representation theorem, we can approach this as a system of forward-backward stochastic differential equations (FBSDEs) of the form
\begin{equation} \label{eq:fbsde_hetero}
	\d X^i_t = \sigma \, \d W^i_t, \qquad \d Y^i_t = Z^i_t \cdot \d \bf{W}_t,
\end{equation}
for $i = 1,~\ldots, N$, with initial conditions $X_0^i=\xi_i$, terminal conditions $Y^i_T = \bf{1}_{\{\tau_i \leq T\}}$, and a Brownian motion $\bf{W} = (W^1, \dots, W^N)$. The terminal conditions are determined by the hitting times
\begin{equation} \label{eq:hitting_times}
	\tau_i = \inf\biggl\{t \in [0, T]\define X^i_t \leq \sum_{j = 1}^N D_{ij} Y^j_t\biggr\}.
\end{equation}
with $\inf \emptyset = \infty$. The trajectory of the $i$th particle is then given by $X^{i}$ with absorption at $\tau_i$, and its associated moving boundary is the martingale $\sum_{j=1}^ND_{ij}Y^j$.

Let us emphasise that the coupling through the hitting times \eqref{eq:hitting_times} appearing in FBSDE \eqref{eq:fbsde_hetero} leads to a threefold singularity. Firstly, even if two realisations of a particle's trajectory and its associated moving boundary are uniformly close, the corresponding hitting times may be arbitrarily far apart. This reflects the singular nature of the absorption mechanism. Secondly, the terminal conditions involve indicator functions, which are discontinuous and, hence, singular in their own right. Finally, note that the diffusivity of the $i$th backward process degenerates upon the absorption of the $i$th particle (as illustrated in Figure \ref{fig:two_particles}). The focus of this work is the well-posedness of this singular FBSDE \eqref{eq:fbsde_hetero}. Existence and uniqueness of FBSDEs have been studied extensively, based on three general approaches: a contraction mapping in small time, first explored by Antonelli \cite{Antonelli1993}, the decoupling of the forward and backward equations proposed by Ma, Protter \& Yong \cite{MaProtterYong1994}, akin to the original treatment of BSDEs by Pardoux \& Peng \cite{PardouxPeng1990, PardouxPeng1992}, and finally the method of continuation by Hu \& Peng \cite{HuPeng1995} and Yong \cite{Yong1997}. Furthermore, Delarue \cite{delarue2002existence} later introduced a refinement of the contraction mapping approach, inductively extending a small time result to a global one based on a careful analysis of the decoupling field.

The contraction mapping approach places regularity assumptions on the coefficients and terminal condition, while the method of continuation requires monotonicity properties, neither of which apply due to the singular nature of FBSDE \eqref{eq:fbsde_hetero}. This suggests an analysis based on a decoupling field that separates the forward and backward equations. Decoupling exploits the Markovianity inherent to many FBSDEs, which allows one to express the backward state as a function of the forward state and time. This function, called the decoupling field, typically solves a partial differential equation (PDE). Thus, the FBSDE in question may be solved by taking a solution to the PDE describing the decoupling field, should it exist and be sufficiently regular, and using it to construct a solution to the FBSDE. After deriving some initial results for FBSDE \eqref{eq:fbsde_hetero} based on probabilistic techniques, this is also the main route that we shall take here.

Note, however, that the system \eqref{eq:fbsde_hetero} is obviously not Markovian in the forward states $(X^1_t, \dots, X^N_t)$ alone, as one must also keep track of absorptions. A possible solution is to append the absorption indicators $(\bf{1}_{\{\tau_1 \leq t\}}, \dots, \bf{1}_{\{\tau_N \leq t\}})$ to the forward state, but their digital nature does not harmonise with the PDE-based approach we are aiming for. Thus, we instead unravel what is usually a single decoupling field into a family of functions, each corresponding to a different configuration of the as yet unabsorbed particles. This family will satisfy a cascade of PDEs, with one level of the cascade determining the boundary and serving as a boundary condition for the next higher level. Since the decoupling fields vary in time, so do the boundaries they demarcate, leading to a free boundary problem. The techniques we develop to solve this cascade of PDEs and, thereby, FBSDE \eqref{eq:fbsde_hetero}, can be a starting point for the analysis of general classes of FBSDEs with coupling through absorption events. In particular, our methods extend to more general coefficients for both the forward and the backward equation. We stick to the minimalistic setting \eqref{eq:fbsde_hetero} throughout, since the essence of the analysis remains the same and the notation is already heavy enough as it is.

\subsection{Related Literature}

Our article sits at the intersection of two strands of literature that so far have seen little interplay: on the one hand, particle systems and Brownian motion in domains with moving boundaries and, on the other hand, FBSDEs with singular data.

We already mentioned the particle systems in Hambly, Ledger \& S{\o}jmark \cite{HamblyLedgerSojmark2019} and Nadtochiy \& Shkolnikov \cite{NadtochiyShkolnikov2020}, where the boundaries increase upon particles being absorbed instead of following martingale trajectories. The focus of that literature is the passage to and analysis of the associated mean-field limit as $N \to \infty$. The limit yields a probabilistic representation of the supercooled Stefan problem, which opened the door to global well-posedness results \cite{delarue_stefan_2022}.

Krylov \cite{krylov_brownian_boundary_2003, krylov_bm_boundary_2003} treats one-dimensional Brownian motion absorbed upon meeting the trajectory of another Brownian motion. From an analytical point, this can be recast as the study of a PDE with Dirichlet boundary condition at the path of a Brownian motion. Regularity properties are established for this PDE, focusing on the behaviour at the boundary. The reflected case was analysed by Burdzy \& Nualart \cite{burdzy_bm_2002}. In multiple dimensions, Burdzy, Chen \& Sylvester \cite{burdzy2004heat} study reflected Brownian motion in a moving domain with regular boundary, while they consider rougher boundaries in the one-dimensional case \cite{burdzy_rbm_2003}. In the former setting, a smooth fundamental solution to the forward equation associated with the reflected Brownian motion can be constructed, while in the latter situation two different types of singularities, heat atoms and heat singularities, may appear at the boundary. Little work exists on nonsmooth boundaries in multiple dimensions.

Without the moving boundary aspect, a recent preprint by Cardaliaguet, Jackson \& Souganidis \cite{cardaliaguet2025mean} studies the control of particle systems with absorption (at a fixed exogenous boundary). Similarly to how we deal with the absorption events in FBSDE \eqref{eq:fbsde_hetero}, using a cascade structure, they derive a cascade of PDEs (in the symmetric setting) to characterise the control problem's value function. The emphasis is on the convergence to the mean-field control problem that emerges in the limit as $N \to \infty$. Moving boundaries, the focal point and main challenge of the present work, are not considered.

Turning to forward-backward problems, we stress that FBSDEs with singular terminal conditions have been studied by Carmona, Delarue, Espinosa \& Touzi \cite{carmona2013singular} and Carmona \& Delarue \cite{carmona2013emissions}. They consider indicator functions of the event that the terminal state $X_T$ is above or below some value, thus revealing a similarity with our problem. While they take a decoupling approach, the analysis is of a quite different nature and relies on fine estimates for the PDEs associated to smooth approximations of the terminal conditions. Also, a significant part of their work explores what happens if the diffusion coefficient degenerates near the discontinuity of the terminal condition. In contrast, our central contribution lies in dealing with the hitting times. Identifying the associated PDE problem in this setting is a nontrivial task in itself, and the analysis of the resulting class of moving boundary problems with singular boundaries is outside the scope of standard methods.

In a recent treatment of FBSDEs with path-dependent coefficients by Hu, Ren \& Touzi \cite{HuRenTouzi2022}, a notion of decoupling fields on path space is studied, building on the theory of decoupling random fields for non-Markovian settings by Ma, Wu, Zhang \& Zhang \cite{MaWuZhangZhang2015}. Due to the singular nature of the coupling through hitting times in \eqref{eq:hitting_times}, our problem cannot be treated within these frameworks. Instead, the path-dependence arising from the hitting times is addressed through the moving boundary problem's cascade structure. The techniques we develop can be a starting point for the analysis of general classes of FBSDEs with coupling through absorption events. In particular, our methods extend to more general coefficients in \eqref{eq:fbsde_hetero}, but we stick with  \eqref{eq:fbsde_hetero} throughout, since the essence of the analysis remains the same and the notation is already heavy enough as it is.

Finally, we note that the singular FBSDEs studied in \cite{carmona2013singular, carmona2013emissions} were motivated by the mathematical analysis of emission markets, specifically the pricing of carbon allowances in cap-and-trade schemes. Our problem arises naturally in the study of contagion in financial networks. We explain how this occurs in the next section.

\subsection{Contagion in Financial Networks}\label{subsect:intro_contagion}

An active area of research in the finance and economics literature is the role that financial networks may play in propagating and amplifying shocks \cite{Acemoglu, elliott2014financial, jackson2024credit}. Following Acemoglu, Ozdaglar \& Tahbaz-Salehi \cite{Acemoglu}, we take as given a weighted directed network of $N$ banks represented by the liabilities $D_{ij}$ that bank $i$ is owed by bank $j$. Moreover, at a given point in time where these liabilities are due, we let $A^i$ denote the value of bank $i$'s external assets\footnote{In the notation of \cite{Acemoglu}, this is the sum $c_j+z_j+\zeta A$, where $c_j$ is a cash amount, $z_j$ is a return, and $A$ is the value of a `long-term' project which is realised today at a fraction $\zeta \in[0,1)$ of its value if this can allow the bank to avoid default by paying its liabilities in full.} and let $D^{i}$ denote bank $i$'s external liabilities (i.e., external to the network). We shall refer to the value of assets minus liabilities as the capital and denote this by $K^i$ for bank $i$. Whilst the original formulation looks quite different (see \cite[Definition 2]{Acemoglu}), the equilibrium model of contagion in \cite{Acemoglu} may be expressed as the fixed-point problem
\begin{align}
	K^i &= A^i + \sum_{j =1}^N \phi_{ij}(K^j) - D^i -  \sum_{j=1}^N D_{ji},\label{eq:acemoglu_fp} \\
	\phi_{ij}(K) &=D_{ij}\mathbf{1}_{\{ K > 0\} } + \frac{\bigl(K + \sum_{\ell=1}^N D_{\ell j} \bigr)_+}{\sum_{\ell=1}^N D_{\ell j}} D_{ij} \mathbf{1}_{\{ K \leq 0 \}} \label{eq:proportional}
\end{align}
for $i, j\in \{1, \dots, N\}$. The functions $\phi_{ij}$ specify the payments bank $i$ receives from bank $j$, with negative capital meaning that a bank is in default. Based on what bank $j$ has available to pay, after settling the external liabilities, \eqref{eq:proportional} enforces that each bank $i\neq j$ is paid an equal proportion of what it is owed. Another common rule, which we shall focus on below, is that a given proportion $R \in [0,1)$ is recovered upon default. That is, $\phi_{ij}(K)=D_{ij}\mathbf{1}_{K > 0} + R  D_{ij}\mathbf{1}_{K \leq 0} $ with $R$ encoding how costly defaults are.

The survey paper \cite{Glasserman} by Glasserman \& Young discusses how general problems of the form \eqref{eq:acemoglu_fp} can also model \emph{`situations where contagion is triggered by changes in market perceptions about the creditworthiness of particular institutions'}. This relies on a suitable choice of the functions $\phi_{ij}$ returning a \emph{`current mark-to-market value'} of the obligation $D_{ij}$. If one shocks $K^j$, then $\phi_{ij}(K^j)$ may be taken to decrease even if $K^j >0$ (to reflect lower creditworthiness), but then $K^i$ in turn decreases, and ultimately \emph{`these declines can lead to the outright default of some institutions, even though no one defaulted to begin with'}.

In the above, any shock is exogenous to the model and the implied timeline of events only refers to iterations towards a fixed point. Moreover, how to choose functions $\phi_{ij}$ that reflect perceptions about creditworthiness is not explored. To address this, consider a dynamic framework where obligations are due at a future time $T$, and let the external assets of bank $i$ evolve as $\mathrm{d}A^i_t=\sigma \mathrm{d}W^i_t$. Assume for simplicity that the risk-free interest rate is zero. If the banks fail as soon as their capital is negative, then the equations for the capital processes $K^i_t$ and the current mark-to-market values $\Phi_{ij}(t)$ become
\begin{align}
	K^i_t &= A^i_0 + \sigma W_t^i + \sum_{j = 1}^N \Phi_{ij}(t) - D^i - \sum_{j = 1}^N D_{ji},\label{eq:cond_dyn_hetero_0} \\
	\Phi_{ij}(t) &= D_{ij}\mathbb{P}(\tau_j > T \vert \mathcal{F}_t) + R D_{ij}\mathbb{P}(\tau_j\leq T \vert \mathcal{F}_t) \label{eq:valuation_functions_0}
\end{align}
with $\tau_j = \inf\{ t\in[0,T] \define K^j_t \leq 0\}$, for a given recovery rate $R\in[0,1)$. Thus, we obtain a dynamic counterpart of \eqref{eq:acemoglu_fp}--\eqref{eq:proportional} that directly models contagion through changing perceptions about the creditworthiness of the banks within the system. This is closely related to the work of Allen, Babus \& Carletti \cite{Allen} which highlighted that the updating of conditional default probabilities can be a key transmission channel for information contagion.

A version of \eqref{eq:cond_dyn_hetero_0}--\eqref{eq:valuation_functions_0} in discrete time and with discrete state space was recently studied by Feinstein \& S{\o}jmark \cite{feinstein_sojmark}. It was shown that there exist minimal and maximal solutions, and examples of nonuniqueness were given. Proposition \ref{prop:equivalence} below confirms that the problem \eqref{eq:cond_dyn_hetero_0}--\eqref{eq:valuation_functions_0} is equivalent to the FBSDE \eqref{eq:fbsde_hetero}. Remarkably, our analysis of \eqref{eq:fbsde_hetero} will allow us to recover uniqueness.


\subsection{Main Contributions and Structure of the Paper} \label{subsect:intro_heuristics}

We end the introduction with a brief outline of the paper and a heuristic explanation of our contributions. In Section \ref{sec:probabilistic}, we undertake a preliminary probabilistic analysis. Exploiting a monotonicity structure inherent in the problem (which should be distinguished from the monotonicity conditions formulated by Hu \& Peng \cite{HuPeng1995}), we are able to derive existence for FBSDE \eqref{eq:fbsde_hetero} through Tarski's fixed-point theorem. This result is complemented by some basic structural properties of solutions to FBSDE \eqref{eq:fbsde_hetero}. However, the analysis leaves several key questions unanswered, such as uniqueness, Markovianity, stability of the system with respect to initial conditions, and more. To address this, the remainder of the paper is concerned with an analytical approach based on decoupling FBSDE \eqref{eq:fbsde_hetero}.

In Section \ref{sec:moving_boundary_pde}, we introduce the cascade of moving boundary problems whose solution is intended to serve as a decoupling field. In a naive formulation of this, the boundary at a given level depends on the solution of that level itself. However, this can be untangled so that the moving boundary for a given level is determined by the solution of the preceding level. Establishing existence and uniqueness of classical solutions to this PDE problem is a delicate issue, since the moving boundary for each level has spatial kinks where its time regularity is challenging to ascertain. In addition, the temporal gradient of the boundary explodes at the discontinuity points of the terminal condition. These difficulties are resolved through a tailored analysis that exploits various structural properties of the boundary. 

Having established classical well-posedness of the moving boundary problem, we rigorously link it to FBSDE \eqref{eq:fbsde_hetero} in Section \ref{sec:verification}. That is, we construct a solution to FBSDE \eqref{eq:fbsde_hetero} based on the unique classical solution of the moving boundary PDE. Note that this does not imply uniqueness for the former, since there could in principle be solutions to FBSDE \eqref{eq:fbsde_hetero} that do not arise from the decoupling field. In fact, the discrete version of FBSDE \eqref{eq:fbsde_hetero} analysed by Feinstein \& S{\o}jmark \cite{feinstein_sojmark} exhibits nonuniqueness, so one may suspect the same to be true in our setting.

This supposition is refuted in Section \ref{sec:uniqueness_fbsde}, where we show that the solution stemming from the decoupling field is indeed the only one to FBSDE \eqref{eq:fbsde_hetero}. Due to the singular behaviour of the FBSDE, we cannot rely on any existing techniques such as contraction or monotonicity arguments. Instead, we exploit the following insight: if there are two distinct solutions to FBSDE \eqref{eq:fbsde_hetero}, then the absorption time of at least one of the particles will be different for the two solutions with positive probability. However, between these two absorption times, the Brownian motion driving the particle could drop to such a low level that a premature absorption of the particle is guaranteed, contradicting the definition of the later absorption time. A rigorous implementation of this proof strategy requires careful handling of the decoupling field.  

We conclude the paper with Section \ref{sec:mfl}, where we make some preliminary observations regarding a possible mean-field limit of the finite particle system as the number $N$ of particles is taken to infinity. We present a potential candidate, but we are not able to verify this as a limit at this stage. Surprisingly, the conjectured limit has an extremely simple structure compared to the finite system. Moreover, it again exhibits nonuniqueness similarly to the discrete setting discussed above.

\section{Probabilistic Analysis of FBSDE \texorpdfstring{\eqref{eq:fbsde_hetero}}{(EQ)}} \label{sec:probabilistic}

We begin our analysis with a brief and relatively simple probabilistic treatment of FBSDE \eqref{eq:fbsde_hetero}. This serves mainly to familiarise ourselves with the problem and establish some initial properties. In the subsequent sections, we shall then pursue an analytical approach to address the more subtle aspects of the problem. First, we fix the relevant notation and give a precise formulation of FBSDE \eqref{eq:fbsde_hetero}.

\subsection{Preliminaries and Precise Problem Formulation}

To simplify notation, we shall write $[N] = \{1, \dots, N\}$ throughout. We will be using the conventions $[\infty, T] = \varnothing$ and $\inf \varnothing = \infty$. Lastly, for $x$, $y \in \R^N$, we write $x \leq y$ if $x_i \leq y_i$ for $i \in [N]$. Fix a probability space $(\Omega, \F, \pr)$ which we take to support the $N$ independent Brownian motions $W^1,~\ldots , W^N$. For a given $\sigma$-algebra $\F_0 \subset \F$ which is independent of the Brownian motions, consider the filtration $\bb{F} = (\F_t)_{t \in[0,T]}$ defined by $\F_t = \F_0 \lor \sigma(W^i_s \define s \in [0, t],\, i \in [N])$.

Next, for $\bb{F}$-stopping times $\tau_1$, $\tau_2$ with values in $[0, T]$ such that $\tau_1 \leq \tau_2$, we let $\cal{T}_{\tau_1, \tau_2}$ denote the space of $\bb{F}$-stopping times $\varrho$ such that $\varrho_1 \leq \varrho \leq \varrho_2$. Furthermore, we let $\bb{H}^{2, d}_{\tau_1, \tau_2}$ be the space of $\R^d$-valued and $\bb{F}$-progressively measurable processes $Z = (Z_t)_{t \in [\tau_1, \tau_2]}$ with $\ev \int_{\tau_1}^{\tau_2} \lvert Z_t\rvert^2 \, \d t < \infty$. Likewise, we then let $\bb{S}^{2, d}_{\tau_1, \tau_2}$ be the space of $\R^d$-valued, $\bb{F}$-adapted, and continuous processes $X= (X_t)_{t \in [\tau_1, \tau_2]}$ with $\ev \sup_{t \in [\tau_1, \tau_2]} \lvert X_t\rvert^2 < \infty$. Throughout, we write $\bb{H}^{2, d} = \bb{H}^{2, d}_{0, T}$, $\bb{S}^{2, d} = \bb{S}^{2, d}_{0, T}$, $\bb{S}^2_{\tau_1, \tau_2} = \bb{S}^{2, 1}_{\tau_1, \tau_2}$, and $\bb{S}^2 = \bb{S}^{2, 1}$.

\begin{remark}
Given two stopping times $\varrho_1$, $\varrho_2 \in \cal{T}_{[0, T]}$ such that $\varrho_1 \leq \varrho_2$ and an $\F_{\varrho_2}$-measurable integrable random variable $\zeta$, we will often want to define a continuous process $\cal{E} = (\cal{E}_t)_{t \in [\varrho_1, \varrho_2]}$ such that $\cal{E}_{\tau}$ coincides a.s.\@ with $\ev[\zeta \vert \F_{\tau}]$ for all $\tau \in \cal{T}_{\varrho_1, \varrho_2}$. This is achieved by letting $\tilde{\cal{E}} = (\tilde{\cal{E}}_t)_{t \in [0, T]}$ be a continuous modification of $(\ev[\zeta \vert \F_t])_{t \in [0, T]}$ and setting
\begin{equation*}
	\cal{E}_t = \tilde{\cal{E}}_t \quad \text{for } t \in [\varrho_1, \varrho_2].
\end{equation*}
That is, for each $\omega  \in \Omega$, we set $\cal{E}_t(\omega) = \tilde{\cal{E}}_t(\omega)$ for $t \in [\varrho_1(\omega), \varrho_2(\omega)]$. As a shorthand for this construction, we write $\cal{E}_t = \ev[\zeta \vert \F_s] \vert_{s = t}$ for $t \in [\varrho_1, \varrho_2]$. What is meant is that the expression $\ev[\zeta \vert \F_s](\omega)$ for a deterministic time $s \in [0, T]$ is evaluated at $s = t$, where $t \in [\varrho_1(\omega), \varrho_2(\omega)]$. Plugging $t$ immediately into the conditional expectation does not make sense, since it depends on $\omega$. We circumvent this by the evaluation operation.
\end{remark}

Now, fix an arbitrary $N \times N$ weighted adjacency matrix $\mathbf{D}$ with entries $D_{ij} \geq 0$ for $i,j\in [N]$. Note that we do not necessarily impose $D_{ii} = 0$, meaning that there can be self-interaction. We shall introduce a generalisation of the system \eqref{eq:fbsde_hetero}, started at any given $\bb{F}$-stopping time $\varrho$ with values in $[0, T]$, any given $\F_{\varrho}$-measurable initial states $\xi_1$,~\ldots, $\xi_N$, and any given $\F_{\varrho}$-measurable subset $\chi \subset [N]$ of initially alive particles. We shall refer to $(\varrho, (\xi_i)_{i \in [N]}, \chi)$ as the \textit{initial data} of the problem. A solution to FBSDE \eqref{eq:fbsde_hetero} started from initial data $(\varrho, (\xi_j)_{j \in [N]}, \chi)$ is a tuple $(X^i, Y^i, Z^i)_{i \in [N]}$ with $(X^i, Y^i, Z^i) \in \bb{S}^2_{\varrho, T} \times \bb{S}^2_{\varrho, T} \times \bb{H}^{2, N}_{\varrho, T}$ such that $(X^i, Y^i, Z^i)_{i \in [N]}$ satisfies 
\begin{equation}
    X^i_t = \xi_i + \sigma(W^i_t - W^i_{\varrho}), \qquad Y^i_t = \bf{1}_{\{\tau_i \leq T\}} - \int_t^T Z^i_s \cdot \d \bf{W}_s
\end{equation}
for all $t \in [\varrho, T]$ and $i \in [N]$, where
\begin{equation*}
	\tau_i = \inf\biggl\{t \in [\varrho, T] \define X^i_t \leq \sum_{j = 1}^N D_{ij} Y^j_t\biggr\}
\end{equation*}
if $i \in \chi$ and $\tau_i = \varrho$ otherwise. The original problem amounts to $\chi=[N]$ and $\rho \equiv 0$, meaning that $ (X^i, Y^i, Z^i)_{i \in [N]}\in (\bb{S}^2 \times \bb{S}^2 \times \bb{H}^{2, N})^N$ satisfies \eqref{eq:fbsde_hetero} and \eqref{eq:hitting_times}.

Just as we write $\mathbf{W}$ for $(W^1,\ldots, W^N)$ in \eqref{eq:fbsde_hetero}, we shall use the boldface symbols $\bf{X}$ and $\bf{Y}$ to denote the vectors $(X^1, \dots, X^N)$ and $(Y^1, \dots, Y^N)$ when $(X^i, Y^i, Z^i)_{i \in [N]}$ is a solution to \eqref{eq:fbsde_hetero}. 

Recall the equilibrium model of contagion \eqref{eq:cond_dyn_hetero_0}--\eqref{eq:valuation_functions_0} derived in Section \ref{subsect:intro_contagion}. Setting $\xi_i := A^i_0 - D_i + \sum_{j = 1}^N (D_{ij} - D_{ji})$, the problem simplifies to
\begin{equation}\label{eq:cond_dyn_hetero}
K^i_t = \xi_i + \sigma W_t^i - (1-R) \sum_{j=1}^N D_{ij} \pr(\tau_j \leq T \vert \mathcal{F}_t)
\end{equation}
with $\tau_i = \inf\{ t\in[0,T] \define K^i_t \leq 0\}$ for $i\in [N]$. The following result makes precise that this is equivalent to our FBSDE problem \eqref{eq:fbsde_hetero}. Without loss of generality, we set $R=0$.

\begin{proposition} \label{prop:equivalence}
A family of processes $K^1$,~\ldots, $K^N \in \bb{S}^2$ satisfies \eqref{eq:cond_dyn_hetero} if and only if there exists a solution $(X^i, Y^i, Z^i)_{i \in [N]} \in (\bb{S}^2 \times \bb{S}^2 \times \bb{H}^{2, N})^N$ to FBSDE \eqref{eq:fbsde_hetero} such that $K^i_t = X^i_t - \sum_{j = 1}^N D_{ij} Y^j_t$ for all $i \in [N]$.
\end{proposition}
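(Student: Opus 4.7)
The statement is a clean equivalence via the substitution $K^i_t = X^i_t - \sum_j D_{ij} Y^j_t$, so the plan is to verify each direction by direct construction. The proof is essentially bookkeeping, but one needs to be careful to invoke the martingale representation theorem and to pick a continuous modification of the relevant conditional-probability martingale.

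For the ``only if'' direction, suppose $(X^i, Y^i, Z^i)_{i \in [N]}$ solves FBSDE \eqref{eq:fbsde_hetero}. Since the terminal condition $Y^i_T = \ind{1}_{\{\tau_i \leq T\}}$ is bounded and $(Y^i_t)$ is a square-integrable continuous martingale in a Brownian filtration, we have the identification $Y^j_t = \pr(\tau_j \leq T \mid \F_t)$ a.s.\ for each $t$. Setting $K^i_t := X^i_t - \sum_j D_{ij} Y^j_t$ and using $X^i_t = \xi_i + \sigma W^i_t$, equation \eqref{eq:cond_dyn_hetero} follows immediately. It remains to check that the hitting time $\tau_i$ appearing in the FBSDE coincides with the first zero of $K^i$: this is precisely the rewriting of the condition $X^i_t \leq \sum_j D_{ij} Y^j_t$ as $K^i_t \leq 0$. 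Square-integrability of $K^i$ is inherited from that of $X^i$ and the boundedness of $Y^j$.

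For the ``if'' direction, suppose $K^1, \ldots, K^N \in \bb{S}^2$ satisfy \eqref{eq:cond_dyn_hetero}. Since each $K^i$ is continuous and $\bb{F}$-adapted, the first hitting times $\tau_i = \inf\{t \in [0, T] \define K^i_t \leq 0\}$ are genuine $\bb{F}$-stopping times. Define $X^i_t := \xi_i + \sigma W^i_t$, so that the forward equation in \eqref{eq:fbsde_hetero} holds trivially. Next, fix a continuous modification of the martingale $t \mapsto \ev[\ind{1}_{\{\tau_j \leq T\}} \mid \F_t]$ and call it $Y^j$; by the martingale representation theorem in a Brownian filtration there exists $Z^j \in \bb{H}^{2, N}$ with $Y^j_t = \ind{1}_{\{\tau_j \leq T\}} - \int_t^T Z^j_s \cdot \d \bf{W}_s$, and $Y^j \in \bb{S}^2$ by Doob's inequality and boundedness of the terminal value. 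By construction, $Y^j_t = \pr(\tau_j \leq T \mid \F_t)$, so \eqref{eq:cond_dyn_hetero} yields $K^i_t = X^i_t - \sum_j D_{ij} Y^j_t$. Finally, the hitting time $\tau_i$ defined through $K^i$ agrees with $\inf\{t \in [0, T] \define X^i_t \leq \sum_j D_{ij} Y^j_t\}$, which closes the FBSDE.

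The only subtle point is the interplay between the hitting times and the conditional-probability processes: in \eqref{eq:cond_dyn_hetero} the $\tau_j$ feeding into $\pr(\tau_j \leq T \mid \F_t)$ are themselves determined by the $K^j$, so the circular coupling must be carried through faithfully in both directions. This is handled automatically once one notes that the substitution $K^i_t \leftrightarrow X^i_t - \sum_j D_{ij} Y^j_t$ preserves the event $\{K^i_t \leq 0\} = \{X^i_t \leq \sum_j D_{ij} Y^j_t\}$, so the two definitions of $\tau_i$ coincide a.s. No contraction or fixed-point argument is required; the proposition is ultimately a rewriting of \eqref{eq:fbsde_hetero} in terms of the capital processes.
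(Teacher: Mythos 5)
Your proposal is correct and follows essentially the same route as the paper: both directions reduce to the substitution $K^i_t \leftrightarrow X^i_t - \sum_j D_{ij} Y^j_t$, with the martingale representation theorem supplying $Z^j$ and the identification $Y^j_t = \pr(\tau_j \leq T \mid \F_t)$ (via $Y^j_t = \ev[Y^j_T \mid \F_t]$) closing the loop. Your explicit remark that $\{K^i_t \leq 0\} = \{X^i_t \leq \sum_j D_{ij} Y^j_t\}$ (so the two definitions of $\tau_i$ coincide) is left implicit in the paper's proof, but it is the same observation.
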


\begin{proof} Suppose first that $K^1$,~\ldots, $K^N \in \bb{S}^2$ form a solution to \eqref{eq:cond_dyn_hetero}. By the martingale representation theorem (see e.g.\@ \cite[Theorem 2.5.2]{zhang_bsde_2017}), there exists an $N$-dimensional process
	$Z^i = (Z^{ij})_{j \in [N]} \in \bb{H}^{2, N}$ such that
	\begin{equation*}
		\bf{1}_{\tau_i \leq T}  = \pr(\tau_i \leq T \vert \F_t) + \int_t^T Z^i_s \cdot \d \bf{W}_s
	\end{equation*}
	for $t \in [0, T]$. Hence, if we set $Y^i_t = \pr(\tau_i \leq T \vert \F_t)$, it follows that $(X^i, Y^i, Z^i)_{i \in [N]}$ satisfies the FBSDE \eqref{eq:fbsde_hetero}. Conversely, if we have a solution $(X^i, Y^i, Z^i)_{i \in [N]}$ to \eqref{eq:fbsde_hetero}, then defining $K^1$,~\ldots, $K^N \in \bb{S}^2$ by
	\begin{equation*}
		K^i_t = \xi_i + \sigma W^i_t - \sum_{j = 1}^N D_{ij} \biggl(Y^j_0 + \int_0^t Z^j_s \cdot \d \bf{W}_s\biggr) = \xi_i + \sigma W^i_t - \sum_{j = 1}^N D_{ij} Y^j_t
	\end{equation*}
	for $i \in [N]$ and using that $Y^j_t = \ev[Y^j_T \vert \F_t] = \pr(\tau_j \leq T \vert \F_t)$, we find that $K^1$,~\ldots, $K^N$ follow the dynamics from \eqref{eq:cond_dyn_hetero}. This completes the proof.
\end{proof}

\subsection{Existence of FBSDE \texorpdfstring{\eqref{eq:fbsde_hetero}}{(EQ)} and Basic Properties}

We begin with a definition of minimal and maximal solutions for FBSDE \eqref{eq:fbsde_hetero}.

\begin{definition}
We call a solution $(X^i, Y^i, Z^i)_{i \in [N]}$ to FBSDE \eqref{eq:fbsde_hetero} started from initial data $(\varrho, (\xi_i)_{i \in [N]}, \chi)$ \textit{minimal} (\textit{maximal}) if for any other solution $(\tilde{X}^i, \tilde{Y}^i, \tilde{Z}^i)_{i \in [N]}$ started from $(\varrho, (\xi_i)_{i \in [N]}, \chi)$ it holds that a.s.\@ $Y^i_t \leq \tilde{Y}^i_t$ ($Y^i_t \geq \tilde{Y}^i_t$) for all $t \in [\varrho, T]$ and $i \in [N]$.
\end{definition}

Clearly, if they exist, minimal and maximal solutions are by definition unique. 

\begin{remark} \label{rem:min_max}
Note that for any two solutions $(X^i, Y^i, Z^i)_{i \in [N]}$ and $(\tilde{X}^i, \tilde{Y}^i, \tilde{Z}^i)_{i \in [N]}$ of FBSDE \eqref{eq:fbsde_hetero}, $Y^i_T \leq \tilde{Y}^i_T$ a.s.\@ implies that for all $\tau \in \cal{T}_{\varrho, T}$, we have a.s.\@ that
\begin{equation*}
    Y^i_{\tau} = \ev[Y^i_T \vert \F_{\tau}] \leq \ev[\tilde{Y}^i_T \vert \F_{\tau}] \leq \tilde{Y}^i_{\tau}.
\end{equation*}
Since both $Y^i$ and $\tilde{Y}^i$ have continuous trajectories, we obtain that a.s.\@ $Y^i_t \leq \tilde{Y}^i_t$ for all $t \in [\varrho, T]$. In other words, a solution $(X^i, Y^i, Z^i)_{i \in [N]}$ to FBSDE \eqref{eq:fbsde_hetero} is already minimal (maximal) if $Y^i_T \leq \tilde{Y}^i_T$ a.s.\@ ($Y^i_T \geq \tilde{Y}^i_T$ a.s.\@) for any other solution $(\tilde{X}^i, \tilde{Y}^i, \tilde{Z}^i)_{i \in [N]}$.
\end{remark}

We have the following existence and comparison result for minimal and maximal solutions. Its proof exploits the monotonicity structure inherent in FBSDE \eqref{eq:fbsde_hetero}, which allows for the application of Tarski's fixed-point theorem. The uniqueness question is not addressed by this approach, but it turns out that we shall be able to tackle this via the analytical investigations in the next section (see\@ Theorem \ref{thm:uniqueness})

\begin{theorem} \label{thm:existence}
For any initial data, there exists a minimal and a maximal solution to FBSDE \eqref{eq:fbsde_hetero}.  Furthermore, if $(X^{k, i}, Y^{k, i}, Z^{k, i})_{i \in [N]}$ is the minimal (maximal) solution to FBSDE \eqref{eq:fbsde_hetero} with initial data $(\varrho_k, (\xi^k_i)_{i \in [N]}, \chi_k)$, $k = 1$, $2$, such that a.s.\@ $\varrho_1 \leq \varrho_2$, $X^{1, i}_{\varrho_2} \leq \xi^2_i$ for $i \in [N]$, and $\chi_1 \subset \chi_2$, then a.s.\@ $Y^{1, i}_t \geq Y^{2, i}_t$ for $t \in [\varrho_2, T]$.
\end{theorem}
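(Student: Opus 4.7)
The plan is to apply Tarski's fixed-point theorem on a complete lattice of terminal random variables, exploiting the monotonicity inherent in FBSDE \eqref{eq:fbsde_hetero}: raising $\bf{Y}$ raises each moving boundary, which brings forward the absorption times $\tau_i$, which in turn raises $\bf{1}_{\{\tau_i\leq T\}}$. Concretely, for fixed initial data $(\varrho,(\xi_i)_{i\in[N]},\chi)$, I would take $L = L^\infty(\F_T;[0,1]^N)$, a complete lattice under the a.s.\@ partial order, and define $\Phi\colon L\to L$ as follows: given $\eta\in L$, let $Y^i$ be a continuous modification of $(\ev[\eta^i\vert\F_t])_{t\in[\varrho,T]}$, set $\tau_i=\inf\{t\in[\varrho,T] : X^i_t\leq\sum_{j} D_{ij}Y^j_t\}$ for $i\in\chi$ and $\tau_i=\varrho$ otherwise, and let $\Phi_i(\eta)=\bf{1}_{\{\tau_i\leq T\}}$. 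Since $D_{ij}\geq 0$, conditional expectations preserve the order, and lower trajectories hit a common boundary later, the map $\Phi$ is monotone.

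Tarski's theorem then yields a least fixed point $\eta^{\min}$ and a greatest fixed point $\eta^{\max}$. For each, defining $Y^i$ as above and $Z^i$ via the martingale representation theorem produces a tuple $(X^i,Y^i,Z^i)_{i\in[N]}$ solving FBSDE \eqref{eq:fbsde_hetero}, as the fixed-point identity $\eta^i=\bf{1}_{\{\tau_i\leq T\}}$ is precisely the required terminal condition. Minimality (resp.\@ maximality) follows because for any other solution $(\tilde X^i,\tilde Y^i,\tilde Z^i)_{i\in[N]}$, the vector $(\tilde Y^i_T)_{i\in[N]}$ is another fixed point of $\Phi$, so $\eta^{\min}\leq(\tilde Y^i_T)_{i\in[N]}\leq\eta^{\max}$ a.s., and Remark \ref{rem:min_max} upgrades this to the pathwise comparison of the $Y$-processes.

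For the comparison statement, I would introduce, for $k=1,2$, the analogous maps $\Phi^k$ on the common lattice $L$ corresponding to the initial data $(\varrho_k,(\xi^k_i)_{i\in[N]},\chi_k)$, each acting on its natural horizon $[\varrho_k,T]$. The key monotonicity claim is $\Phi^1(\eta)\geq\Phi^2(\eta)$ a.s.\@ for every $\eta\in L$. For $i\notin\chi_1$ this is trivial since $\Phi^1_i(\eta)=1$. For $i\in\chi_1\subset\chi_2$, the hypotheses $\varrho_1\leq\varrho_2$ and $X^{1,i}_{\varrho_2}\leq\xi^2_i$, combined with the identical Brownian increments, yield $X^{1,i}_t\leq X^{2,i}_t$ on $[\varrho_2,T]$; hence if $X^{2,i}$ crosses the common boundary $B^i_t=\sum_{j}D_{ij}Y^j_t$ at some $\tau^{2,i}\leq T$ then $X^{1,i}$ crosses it no later on $[\varrho_1,T]$, giving $\Phi^1_i(\eta)\geq\Phi^2_i(\eta)$. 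The residual case $i\in\chi_2\setminus\chi_1$ is again trivial. The standard Tarski comparison—that if two monotone maps on a complete lattice satisfy $f\geq g$, then both the least and the greatest fixed points of $f$ dominate those of $g$—then delivers $\eta^{1,\min}\geq\eta^{2,\min}$ and $\eta^{1,\max}\geq\eta^{2,\max}$, and taking conditional expectations gives $Y^{1,i}_t\geq Y^{2,i}_t$ for $t\in[\varrho_2,T]$.

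The main obstacles are technical rather than conceptual: one must verify that $\Phi$ is well-defined on a.s.\@ equivalence classes (which reduces to checking that the hitting time is a.s.\@ independent of the particular continuous modification of $Y^i$) and keep track of measurability and integrability throughout. The remaining ingredients—completeness of $L^\infty(\F_T;[0,1]^N)$ under essential sup/inf, the martingale representation for $Z^i$, and the algebraic comparison of Tarski fixed points under $\Phi^1\geq\Phi^2$—are standard. Deeper questions such as uniqueness, Markovianity, and stability in the initial data lie beyond this probabilistic setup and are deferred to the PDE-based analysis developed later in the paper.
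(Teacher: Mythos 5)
Your proposal is correct and takes essentially the same route as the paper: Tarski's fixed-point theorem applied to the map sending a candidate terminal vector $\eta$ to the indicators of the resulting hitting times, with minimality/maximality read off from the least/greatest fixed points and the comparison statement deduced from the pointwise domination $\Phi^1\geq\Phi^2$ via the standard Tarski comparison of least/greatest fixed points. The only (cosmetic) difference is your choice of lattice $L^\infty(\F_T;[0,1]^N)$ rather than the paper's $L^0(\F_T;\{0,1\}^N)$; since $\Phi$ maps into indicator-valued variables anyway, the fixed points coincide and both lattices are complete under essential sup/inf, so nothing changes.
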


\begin{proof}
\textit{Existence}: As discussed above the statement of the theorem, we intend to apply Tarski's fixed-point theorem. This result guarantees the existence of a least and greatest fixed point for monotonic maps on complete lattices. We shall first introduce a complete lattice that is suitable for our purposes. Denote by $L^0(\mathcal{F}_T;\{0,1\}^N)$ the set of all $\mathcal{F}_T$-measurable random vectors $\zeta$ with values in $\{0, 1\}$. Next, we introduce a partial ordering ``$\leq$'' on $L^0(\mathcal{F}_T;\{0,1\}^N)$ given by almost sure component-wise domination. That is, $\zeta \leq \eta$ for $\zeta$, $\eta \in L^0(\mathcal{F}_T; \{0,1\}^N)$ if $\zeta(\omega) \leq \eta(\omega)$ for a.e.\@ $\omega \in \Omega$. Note that, for any collection $(\zeta^i)_{i \in I}$ in $ L^0(\mathcal{F}_T; \{0,1\}^N)$ for an arbitrary index set $I$, we have that $\zeta = \esssup_{i \in I} \zeta^i $ is again in $L^0(\mathcal{F}_T; \{0,1\}^N)$, so $\zeta$ yields a least upper bound for the set $(\zeta^i)_{i \in I}$ under the partial order defined above. Analogously, the essential infimum yields a greatest lower bound. Consequently, $L^0(\mathcal{F}_T;\{0,1\}^N)$ is a complete lattice under this partial order.

Next, let us fix initial data $(\varrho, (\xi_i)_{i \in [N]}, \chi)$ and define a mapping $\Psi \define L^0(\mathcal{F}_T; \{0,1\}^N) \rightarrow L^0(\mathcal{F}_T; \{0,1\}^N)$ for $\zeta \in L^0(\mathcal{F}_T; \{0,1\}^N)$ by
\begin{equation} \label{eq:fp_map}
    \Psi(\zeta)= \bigl(\Psi_1(\zeta),\ldots, \Psi_N(\zeta)\bigr) = \bigl(\mathbf{1}_{\{\tau_1(\zeta) \leq T\}},\ldots, \mathbf{1}_{\{\tau_N(\zeta)\leq T\}}\bigr),
\end{equation}
where
\begin{equation*}
    \tau_i(\zeta) = \inf\biggl\{t \in [\varrho, T] \define X^i_t \leq \sum_{j = 1}^N D_{ij} Y^j_t(\zeta) \biggr\}
\end{equation*}
if in $i \in \chi$ and $\tau_i(\zeta) = \varrho$ otherwise. The processes $X^i = (X_t)_{t \in [\varrho, T]}$ and $Y^i(\zeta) = (Y^i_t(\zeta)_{t \in [\varrho, T]})$ are given by $X^i_t = \xi_i + \sigma(W^i_t - W^i_{\varrho})$ for $t \in [\varrho, T]$ and $Y^j_t(\zeta) = \ev[\zeta_j \vert \F_s]\vert_{s = t}$ for $t \in [\varrho, T]$. Observe that if $\zeta \leq \eta$ for $\zeta$, $\eta \in L^0(\mathcal{F}_T, \{0,1\}^N)$, then a.s.\@ $Y^j_t(\zeta) \leq Y^j_t(\eta)$ for all $t \in [\varrho, T]$. Since $D_{ij} \geq 0$, it follows that $\sum_{j = 1}^N D_{ij} Y^j_t(\zeta) \leq \sum_{j = 1}^N D_{ij} Y^j_t(\eta)$. Consequently, $\tau_i(\zeta) \leq  \tau_i(\eta)$, which yields that $\Psi(\zeta) \leq \Psi(\eta)$. That is, $\Psi$ is a monotonic mapping for the partial order on $L^0(\mathcal{F}_T; \{0,1\}^N)$ and, hence, Tarski's fixed-point theorem provides a least and greatest fixed point of $\Psi$ in $L^0(\mathcal{F}_T; \{0,1\}^N)$. For any such fixed point $\zeta \in L^0(\mathcal{F}_T; \{0,1\}^N)$, it holds that $\zeta_i = \bf{1}_{\{\tau_i(\zeta) \leq T\}}$. Hence, setting $\tilde{Y}^i_t = \ev[\zeta_i \vert \F_s]\vert_{s = t} = \pr(\tau_i(\zeta) \leq T \vert \F_s)\vert_{s = t}$ for $t \in [\varrho, T]$ and obtaining $Z^i \in \bb{H}^{2, N}_{\varrho, T}$ from the martingale representation theorem such that
\begin{equation*}
    Y^i_t = \bf{1}_{\{\tau_i(\zeta) \leq T\}} - \int_t^T Z^i_s \cdot \d \bf{W}_s
\end{equation*}
for $t \in [\varrho, T]$, it follows that $(X^i, Y^i, Z^i)_{i \in [N]}$ is a solution to FBSDE \eqref{eq:fbsde_hetero}. Clearly, the least and greatest fixed point of $\Psi$ correspond to the minimal and maximal solution of FBSDE \eqref{eq:fbsde_hetero}, respectively.

\textit{Comparison}: Let the initial data $(\varrho_k, (\xi^k_i)_{i \in [N]}, \chi_k)$, $k = 1$, $2$, be as in the statement of the theorem and denote by $\Psi^k$ and $\tau^k_i(\zeta)$, $i \in [N]$, $\zeta \in L^0(\mathcal{F}_T; \{0,1\}^N)$, the corresponding maps $L^0(\mathcal{F}_T; \{0,1\}^N) \to L^0(\mathcal{F}_T; \{0,1\}^N)$ and stopping times constructed in \eqref{eq:fp_map} and below. Owing to the assumed relationship between the two initial data, it holds for $\zeta \in L^0(\mathcal{F}_T; \{0,1\}^N)$ that
\begin{equation*}
    \inf\biggl\{t \in [\varrho_1, T] \define X^{1, i}_t \leq \sum_{j = 1}^N D_{ij} Y^j_t(\zeta)\biggr\} \leq \inf\biggl\{t \in [\varrho_2, T] \define X^{2, i}_t \leq \sum_{j = 1}^N D_{ij} Y^j_t(\zeta)\biggr\},
\end{equation*}
where $X^{k, i}_t = \xi^k_i + \sigma(W^i_t - W^i_{\varrho_k})$ for $t \in [\varrho_k, T]$. Since, furthermore, $\chi_1 \subset \chi_2$, we obtain that $\tau^1_i(\zeta) \leq \tau^2_i(\zeta)$. From this, we conclude that $\Psi^1(\zeta) \geq \Psi^2(\zeta)$ for all $\zeta \in L^0(\mathcal{F}_T; \{0,1\}^N)$. Now, the least and greatest fixed point $\zeta^{k, -}$ and $\zeta^{k, +}$ of $\Psi^k$ provided by Tarski's fixed-point theorem are simply the greatest lower bound and least upper bound of the set
\begin{equation*}
    \bigl\{\zeta \in L^0(\mathcal{F}_T; \{0,1\}^N) \define \zeta \leq \Psi^k(\zeta)\bigr\} \quad \text{and} \quad \bigl\{\zeta \in L^0(\mathcal{F}_T; \{0,1\}^N) \define \zeta \geq \Psi^k(\zeta)\bigr\},
\end{equation*}
respectively. However, it follows from the inequality $\Psi^1(\zeta) \geq \Psi^2(\zeta)$ that
\begin{equation*}
    \bigl\{\zeta \in L^0(\mathcal{F}_T; \{0,1\}^N) \define \zeta \leq \Psi^2(\zeta)\bigr\} \subset \bigl\{\zeta \in L^0(\mathcal{F}_T; \{0,1\}^N) \define \zeta \leq \Psi^1(\zeta)\bigr\}
\end{equation*}
and 
\begin{equation*}
    \bigl\{\zeta \in L^0(\mathcal{F}_T; \{0,1\}^N) \define \zeta \geq \Psi^1(\zeta)\bigr\} \subset \bigl\{\zeta \in L^0(\mathcal{F}_T; \{0,1\}^N) \define \zeta \geq \Psi^2(\zeta)\bigr\}.
\end{equation*}
Consequently, the least and greatest fixed points satisfy $\zeta^{1, -} \geq \zeta^{2, -}$ and $\zeta^{1, +} \geq \zeta^{2, +}$. Let $(X^{k, \pm, i}, Y^{k, \pm, i}, Z^{k, \pm, i})_{i \in [N]}$ denote the solutions to FBSDE \eqref{eq:fbsde_hetero} associated with the fixed point $\zeta^{k, \pm}$. As mentioned earlier, $(X^{k, -, i}, Y^{k, -, i}, Z^{k, -, i})_{i \in [N]}$ is the minimal solution with initial data $(\varrho_k, (\xi^k_i)_{i \in [N]}, \chi_k)$, while $(X^{k, +, i}, Y^{k, +, i}, Z^{k, +, i})_{i \in [N]}$ is maximal. From the relationship between the least and greatest fixed points, we can conclude that for every $\tau \in \cal{T}_{\varrho_2, T}$, we have a.s.\@ that
\begin{equation*}
    Y^{1, \pm, i}_{\tau} = \ev\bigl[\zeta^{1, \pm}_i \big\vert \F_{\tau}\bigr] \geq \ev\bigl[\zeta^{2, \pm}_i \big\vert \F_{\tau}\bigr] = Y^{2, \pm, i}_{\tau}.
\end{equation*}
Since $Y^{k, \pm, i}$ has continuous trajectories, we get that a.s.\@ $Y^{1, \pm, i}_t \geq Y^{2, \pm, i}_t$ for $t \in [\varrho_2, T]$.
\end{proof}

By applying the comparison result from Theorem \ref{thm:existence}, we can deduce a flow property for maximal solutions. The particular argument used in its proof exploits the defining characteristic of maximal solutions and, as such, does not apply to minimal solutions. 

\begin{corollary} \label{cor:flow_property}
Let $(X^{k, i}, Y^{k, i}, Z^{k, i})_{i \in [N]}$ be the maximal solution to FBSDE \eqref{eq:fbsde_hetero} with initial data $(\varrho_k, (\xi^k_i)_{i \in [N]}, \chi_k)$, $k = 1$, $2$, such that a.s.\@ $\varrho_1 \leq \varrho_2$, $\xi^2_i = X^{1, i}_{\varrho_2}$ for $i \in [N]$, and $\chi_2 = \{i \in [N] \define \tau^1_i > \varrho_2\}$, where $\tau^1_i$ are the killing times of the system $(X^{1, i}, Y^{1, i}, Z^{1, i})_{i \in [N]}$. Then a.s.\@
\begin{equation*}
    (X^{1, i}_t, Y^{1, i}_t, Z^{1, i}_t)_{i \in [N]} = (X^{2, i}_t, Y^{2, i}_t, Z^{2, i}_t)_{i \in [N]}
\end{equation*}
for all $t \in [\varrho_2, T]$.
\end{corollary}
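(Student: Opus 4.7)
The strategy is to work at the level of the indicator-valued fixed-point map $\Psi$ from the proof of Theorem \ref{thm:existence}, exploiting that the maximal solution corresponds to the \emph{greatest} fixed point. Let $\Psi^{(k)}$ denote the map associated with initial data $(\varrho_k, (\xi^k_i)_{i \in [N]}, \chi_k)$ and let $\zeta^{(k)}$ be its greatest fixed point, so that $Y^{k, i}_t = \ev[\zeta^{(k)}_i \vert \F_s]\vert_{s = t}$ on $[\varrho_k, T]$. By the Knaster--Tarski theorem, $\zeta^{(k)}$ also coincides with the supremum of the post-fixed point set $\{\zeta \in L^0(\F_T; \{0,1\}^N) \define \zeta \leq \Psi^{(k)}(\zeta)\}$, a characterisation that will be essential. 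A direct application of the comparison in Theorem \ref{thm:existence} is blocked because in our setup $\chi_2 \subset \chi_1$, which runs the wrong way.

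For the upper bound $\zeta^{(1)} \leq \zeta^{(2)}$, first observe that $X^{1, i}_t = X^{2, i}_t$ on $[\varrho_2, T]$, since $\xi^2_i = X^{1, i}_{\varrho_2}$. Then verify that the restriction of $(X^{1, i}, Y^{1, i}, Z^{1, i})_{i \in [N]}$ to $[\varrho_2, T]$ is itself a solution to FBSDE \eqref{eq:fbsde_hetero} with initial data $(\varrho_2, \xi^2, \chi_2)$: for $i \in \chi_2 = \{j \define \tau^1_j > \varrho_2\}$ the first crossing time of $X^{1, i}$ above its boundary lies in $(\varrho_2, T] \cup \{\infty\}$, so it coincides with the infimum taken over $[\varrho_2, T]$; for $i \notin \chi_2$, $\tau^1_i \leq \varrho_2$ forces $Y^{1, i}_T = 1$, matching the terminal value imposed by the convention $\tau_i = \varrho_2$ of the restarted problem. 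Maximality of $(X^{2, i}, Y^{2, i}, Z^{2, i})$ then yields $Y^{1, i}_t \leq Y^{2, i}_t$ on $[\varrho_2, T]$, which at $t = T$ gives $\zeta^{(1)} \leq \zeta^{(2)}$.

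For the reverse direction, we show $\zeta^{(2)} \leq \Psi^{(1)}(\zeta^{(2)})$; since $\zeta^{(1)}$ dominates every post-fixed point of $\Psi^{(1)}$, this forces $\zeta^{(2)} \leq \zeta^{(1)}$. The indices split into three cases. For $i \notin \chi_1$---and hence $i \notin \chi_2$ since $\chi_2 \subset \chi_1$---both $\zeta^{(2)}_i$ and $\Psi^{(1)}(\zeta^{(2)})_i$ equal $1$ by construction. For $i \in \chi_1 \setminus \chi_2$, we still have $\zeta^{(2)}_i = 1$ (as $\tau^2_i = \varrho_2$), and the bound $\ev[\zeta^{(2)}_j \vert \F_s]\vert_{s = t} \geq Y^{1, j}_t$ on $[\varrho_1, T]$---which follows from $\zeta^{(1)} \leq \zeta^{(2)}$ and propagates backwards from $\varrho_2$ via the martingale property---gives $\tau^{(1)}_i(\zeta^{(2)}) \leq \tau^1_i \leq \varrho_2 \leq T$, so $\Psi^{(1)}(\zeta^{(2)})_i = 1$ as required. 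For $i \in \chi_2$ with $\zeta^{(2)}_i = 1$, there is a crossing time $t_0 \in [\varrho_2, T]$ at which $X^{2, i}_{t_0} \leq \sum_j D_{ij} Y^{2, j}_{t_0}$; the identities $X^{1, i}_{t_0} = X^{2, i}_{t_0}$ and $\ev[\zeta^{(2)}_j \vert \F_s]\vert_{s = t_0} = Y^{2, j}_{t_0}$ transfer this crossing to $\Psi^{(1)}$, giving $\tau^{(1)}_i(\zeta^{(2)}) \leq t_0 \leq T$. The case $\zeta^{(2)}_i = 0$ is trivial.

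Combining both inequalities, $\zeta^{(1)} = \zeta^{(2)}$ almost surely, whence $Y^{1, i} = Y^{2, i}$ on $[\varrho_2, T]$ by the martingale property, the forward processes agree as already noted, and uniqueness of the martingale representation identifies $Z^{1, i}$ with $Z^{2, i}$ on $[\varrho_2, T]$. The main obstacle is the reverse direction, which genuinely requires the greatest-fixed-point characterisation and has no counterpart for least fixed points---consistent with the remark that the argument only works for maximal solutions.
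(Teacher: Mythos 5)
Your proof is correct, and for the direction $Y^{2,i}_t \leq Y^{1,i}_t$ it genuinely departs from the paper's. The paper's proof simply cites Theorem~\ref{thm:existence} to obtain $Y^{1,i}_t \geq Y^{2,i}_t$, but you are right to flag that the hypothesis $\chi_1 \subset \chi_2$ of that theorem is not satisfied here: one has $\chi_2 = \{i \define \tau^1_i > \varrho_2\} \subset \chi_1$, and the inclusion is strict whenever some particle dies in $[\varrho_1, \varrho_2]$. The proof of the comparison in Theorem~\ref{thm:existence} hinges on the \emph{pointwise} inequality $\Psi^{(1)}(\zeta) \geq \Psi^{(2)}(\zeta)$ for all $\zeta$, and this fails for $i \in \chi_1 \setminus \chi_2$: when $\zeta$ is small the boundary $\sum_j D_{ij} Y^j(\zeta)$ is low, so the genuine hitting time $\tau^{(1)}_i(\zeta)$ can exceed $\varrho_2 = \tau^{(2)}_i(\zeta)$. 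Your workaround is a localised version of the Tarski argument: establish $\zeta^{(1)} \leq \zeta^{(2)}$ first by the restriction-plus-maximality argument (this half is identical to the paper's), then feed that inequality back in to verify that the specific element $\zeta^{(2)}$ is a post-fixed point of $\Psi^{(1)}$, so $\zeta^{(2)} \leq \zeta^{(1)}$ by the greatest-post-fixed-point characterisation. This avoids the need for a uniform comparison of the two maps, correctly handles all three index cases, and in fact supplies the missing justification for the inequality the paper attributes to Theorem~\ref{thm:existence}. The remainder (martingale representation gives $Z^{1,i} = Z^{2,i}$, forward parts agree by $\xi^2_i = X^{1,i}_{\varrho_2}$) matches the paper.
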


\begin{proof}
Let $(X^{k, i}, Y^{k, i}, Z^{k, i})_{i \in [N]}$, $k = 1$, $2$, be as in the statement of the corollary. By Theorem \ref{thm:existence}, we have that a.s.\@ $Y^{1, i}_t \geq Y^{2, i}_t$ for $t \in [\varrho_2, T]$ and $i \in [N]$. However, it is easy to verify that $(X^{1, i}, Y^{1, i}, Z^{1, i})_{i \in [N]}$ when restricted to the interval $[\varrho_2, T]$ is a solution to FBSDE \eqref{eq:fbsde_hetero} with the same initial data $(\varrho_2, (\xi^2_i)_{i \in [N]}, \chi_2)$ as $(X^{2, i}, Y^{2, i}, Z^{2, i})_{i \in [N]}$. Since the latter solution is the maximal one, we obtain reverse inequality $Y^{1, i}_t \leq Y^{2, i}_t$ as well. Hence, $Y^{1, i}$ and $Y^{2, i}$ coincide for each $i \in [N]$. From this and the uniqueness part of the martingale representation theorem, we get that $(X^{1, i}, Y^{1, i}, Z^{1, i})_{i \in [N]}$ and $(X^{2, i}, Y^{2, i}, Z^{2, i})_{i \in [N]}$ agree on $[\varrho_2, T]$.
\end{proof}

It turns out that the maximal solution can be obtained as the limit of a monotonic sequence obtained by repeated application of the mapping $\Psi$ defined in and below \eqref{eq:fp_map}. By carefully examining this procedure, we can see that the maximal solution can be constructed solely from the initial data $D = (\varrho, (\xi_i)_{i \in [N]}, \chi)$ and the increments $(\bf{W}_t - \bf{W}_{\varrho})_{t \in [\varrho, T]}$ of the Brownian motion after the initial time $\varrho$. That is, the maximal solution is strong in a probabilistic sense. 

Define the filtration $\bb{F}^D = (\cal{F}^D_t)_{t \in [0, T]}$ by
\begin{equation*}
    \cal{F}^D_t = \sigma\Bigl(\bigl\{\varrho \leq s,\, (\xi_i)_{i \in [N]} \in A, \, \chi = B\bigr\}, \bf{W}_{s \lor \varrho} - 
    \bf{W}_{\varrho} \define s \in [0, t],\, A \in \cal{B}(\R^N),\, B \in \P_N\Bigr)
\end{equation*}
for $t \in [0, T]$, where $\P_N$ is the power set of $[N]$. We construct a sequence $(Y^{n, i}, Z^{n, i})_{i \in [N]}$, $n \geq 0$, with $Y^{n, i} \in \bb{S}^2_{\varrho, T}$ and $Z^{n, i} \in \bb{H}^{2, N}_{\varrho, T}$ as follows: first, we $Y^{0, i}_t = 1$ and $Z^{0, i}_t = 0$ for $t \in [\varrho, T]$. Now, assuming that $(Y^{n, i}, Z^{n, i})_{i \in [N]}$ is given for some $n \geq 0$, set
\begin{equation*}
    \tau^n_i = \inf\biggl\{t \in [\varrho, T] \define X^i_t \leq \sum_{j = 1}^N D_{ij} Y^{n, j}_t\biggr\}
\end{equation*}
if $i \in \chi$ and $\tau^n_i = \varrho$ otherwise. Then, we define $(Y^{n + 1, i})_{i \in [N]}$ by
\begin{equation*}
    Y^{n + 1, i}_t = \pr(\tau^n_i \leq T \vert \cal{F}^D_s)\vert_{s = t}
\end{equation*}
and obtain the $\bb{F}^D$-progressively measurable processes $(Z^{n + 1, i})_{i \in [N]}$ with $Z^{n + 1, i} \in \bb{H}^{2, N}_{\varrho, T}$ from the martingale representation theorem, such that
\begin{equation*}
    Y^{n + 1, i}_t = \bf{1}_{\{\tau^n_i \leq T\}} - \int_t^T Z^{n + 1, i}_s \cdot \d \bf{W}_s
\end{equation*}
for $t \in [\varrho, T]$. To see that the martingale representation theorem applies to the filtration $\bb{F}^D$ in the desired way, it is instructive to slightly change the point of view. Let us define the $\sigma$-algebra $\tilde{\cal{F}}^D_0 = \sigma(\varrho, (\xi_i)_{i \in [N]}, \chi)$ and the Brownian motions $\tilde{W}^i = (\tilde{W}^i_t)_{t \geq 0}$, $i \in [N]$, by $\tilde{W}^i_t = W^i_{\varrho + t} - W^i_{\varrho}$ for $t \geq 0$. Clearly, $\tilde{\bf{W}} = (\tilde{W}^1, \dots, \tilde{W}^N)$ is independent of $\tilde{\cal{F}}^D_0$. Next, we let the filtration $\tilde{\bb{F}}^D = (\tilde{\cal{F}}^D)_{t \geq 0}$ be given by
\begin{equation*}
    \tilde{\cal{F}}^D_t = \tilde{\cal{F}}^D_0 \lor \sigma\bigl(\tilde{W}^i_s \define s \in [0, t],\, i \in [N]\bigr)
\end{equation*}
for $t \geq 0$. Then, we have $\tilde{\cal{F}}^D_t = \cal{F}^D_{\varrho + t}$, so that $(Y^{n + 1, i}_{(\varrho + t) \land T})_{t \in [0, T]}$ is an $\tilde{\bb{F}}^D$-martingale. Since the martingale representation theorem applies to the filtration $\tilde{\bb{F}}^D$, we can find $\R^N$-valued $\tilde{\bb{F}}^D$-progressively measurable processes $\tilde{Z}^i = (\tilde{Z}^i_t)_{t \in [0, T]}$ such that $\ev\int_0^T \lvert \tilde{Z}^i_t\rvert^2 \, \d t < \infty$ and
\begin{equation*}
    Y^{n + 1, i}_{\varrho + t} = Y^{n + 1, i}_T - \int_t^T \tilde{Z}^i_s \cdot \d \tilde{\bf{W}}_s = \bf{1}_{\{\tau^n_i \leq T\}} - \int_t^{T - \varrho} \tilde{Z}^i_s \cdot \d \bf{W}_s
\end{equation*}
for $t \in [0, T - \varrho]$. Hence, $Z^{n + 1, i} \in \bb{H}^{2, N}_{\varrho, T}$ defined by $Z^{n + 1, i}_t = \tilde{Z}^i_{t - \varrho}$ for $t \in [\varrho, T]$ is the desired $\bb{F}^D$-progressively measurable process.

We say that a sequence $(x^n)_{n \geq 1}$ in $\R^N$ is \textit{nonincreasing} (\textit{nondecreasing}) if $x^{n + 1} \leq x^n$ ($x^n \geq x^{n + 1}$) for all $n \geq 1$.

\begin{proposition} \label{prop:maximal}
The sequence $(Y^{n, i}_t)_{i \in [N]}$, $n \geq 1$, is a.s.\@ nonincreasing for all $t \in [\varrho, T]$ and $(X^i, Y^{n, i}, Z^{n, i})_{i \in [N]}$ converges to the maximal solution $(X^i, Y^i, Z^i)_{i \in [N]}$ of FBSDE \eqref{eq:fbsde_hetero} in $(\bb{S}^2_{\varrho, T} \times \bb{S}^2_{\varrho, T} \times \bb{H}^{2, N}_{\varrho, T})^N$ as $n \to \infty$. In particular, $(X^i, Y^i, Z^i)_{i \in [N]}$ is $\bb{F}^D$-progressively measurable.
\end{proposition}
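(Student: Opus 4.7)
The plan runs on the monotonicity of the iteration and on identifying its almost sure limit with the maximal solution. I would first prove that $(Y^{n, i}_t)_n$ is a.s.\ nonincreasing for each $i$ and $t$ by induction on $n$: the base case is trivial since $Y^{0, i} \equiv 1$, while the inductive hypothesis $Y^{n, j} \leq Y^{n - 1, j}$ for each $j$ implies that the boundary process $\sum_j D_{ij} Y^{n, j}_t$ decreases pointwise in $n$, so $\tau^n_i \geq \tau^{n - 1}_i$, and hence $Y^{n + 1, i}_t = \ev[\bf{1}_{\{\tau^n_i \leq T\}} \vert \F^D_t] \leq \ev[\bf{1}_{\{\tau^{n - 1}_i \leq T\}} \vert \F^D_t] = Y^{n, i}_t$.

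With monotonicity and boundedness in $[0, 1]$ in place, the pointwise limit $Y^{\infty, i}_t = \lim_n Y^{n, i}_t$ exists a.s., and the nondecreasing hitting times converge to $\tau^\infty_i \in [\varrho, T] \cup \{\infty\}$ with $\bf{1}_{\{\tau^n_i \leq T\}} \to \bf{1}_{\{\tau^\infty_i \leq T\}}$ a.s. Bounded convergence then gives $Y^{\infty, i}_t = \ev[\bf{1}_{\{\tau^\infty_i \leq T\}} \vert \F^D_t]$, and the martingale representation theorem (applied in $\bb{F}^D$) provides $Z^{\infty, i} \in \bb{H}^{2, N}_{\varrho, T}$.

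The main obstacle is to show that $\tau^\infty_i$ coincides with the hitting time $\tilde{\tau}_i = \inf\{t \in [\varrho, T] \define X^i_t \leq \sum_j D_{ij} Y^{\infty, j}_t\}$ of the limit boundary, which is exactly where the singular (non-Lipschitz) dependence of hitting times on boundaries is felt. The inequality $\tau^\infty_i \leq \tilde{\tau}_i$ is immediate since the boundaries decrease in $n$. For the converse, I would apply Doob's $L^2$ maximal inequality to the bounded martingales $Y^{n, j} - Y^{\infty, j}$, whose terminal differences go to zero in $L^2(\F^D_T)$ by dominated convergence, to extract a subsequence $(n_k)$ along which $\sup_{t} \lvert Y^{n_k, j}_t - Y^{\infty, j}_t\rvert \to 0$ a.s.; on this subsequence, continuity of $X^i$ together with uniform convergence of the boundaries lets us pass to the limit in $X^i_{\tau^{n_k}_i} \leq \sum_j D_{ij} Y^{n_k, j}_{\tau^{n_k}_i}$ on $\{\tau^\infty_i < \infty\}$, yielding $X^i_{\tau^\infty_i} \leq \sum_j D_{ij} Y^{\infty, j}_{\tau^\infty_i}$, hence $\tilde{\tau}_i \leq \tau^\infty_i$. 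At this stage, $(X^i, Y^{\infty, i}, Z^{\infty, i})_{i \in [N]}$ is manifestly an $\bb{F}^D$-adapted solution to \eqref{eq:fbsde_hetero}.

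To see that the limit is maximal, I would show by induction on $n$ that $Y^{n, j} \geq \tilde{Y}^j$ for any solution $\tilde{Y}$. The base case is trivial. For the inductive step, I would first note that although $Y^{n, i}$ is defined through $\bb{F}^D$-conditional expectations, it is also an $\bb{F}$-martingale, since the $\F^D_T$-measurable indicator $\bf{1}_{\{\tau^{n - 1}_i \leq T\}}$ is conditionally independent of $\F_\varrho$ given $\F^D_\varrho$ by the strong Markov property at $\varrho$. Then $Y^{n - 1, j} \geq \tilde{Y}^j$ gives $\tau^{n - 1}_i \leq \tilde{\tau}_i$ and hence $Y^{n, i}_T \geq \tilde{Y}^i_T$, which upgrades to $Y^{n, i}_t \geq \tilde{Y}^i_t$ by taking $\F_t$-conditional expectations as in Remark \ref{rem:min_max}. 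Passing to the limit yields $Y^{\infty} \geq \tilde{Y}$, confirming maximality. Convergence in $(\bb{S}^2_{\varrho, T} \times \bb{S}^2_{\varrho, T} \times \bb{H}^{2, N}_{\varrho, T})^N$ then follows from Doob's maximal inequality for the martingales $Y^{n, i} - Y^{\infty, i}$ and the It\^o isometry for $Z^{n, i} - Z^{\infty, i}$, while $\bb{F}^D$-progressive measurability is inherited from the iteration.
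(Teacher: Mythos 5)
Your proof is correct and follows essentially the same route as the paper's: monotone induction for $(Y^{n,i})_n$, convergence of the nondecreasing hitting times $\tau^n_i$, Doob's $L^2$ maximal inequality to extract a subsequence along which the $Y^{n,j}$ converge uniformly a.s., and identification of the limiting hitting time with the hitting time of the limit boundary. The only minor structural variation is in the maximality step: you induct that $Y^{n,j} \geq \tilde{Y}^j$ for an \emph{arbitrary} solution $\tilde{Y}$ and pass to the limit, whereas the paper inducts against the already-constructed maximal solution from Theorem~\ref{thm:existence} and invokes its defining property at the end --- both are valid and roughly equivalent in effort, with yours slightly more self-contained. Your explicit remark that $Y^{n,i}$ is also an $\bb{F}$-martingale (not merely an $\bb{F}^D$-one), justified via the strong Markov property at $\varrho$ and needed to compare with the $\bb{F}$-martingales $\tilde{Y}^i$, is a useful clarification of a point the paper leaves implicit.
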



\begin{proof} 
We show by induction that $(Y^{n, i})_{i \in [N]}$, $n \geq 0$, is nonincreasing and bounded from below by the maximal solution $(Y^i)_{i \in [N]}$ of FBSDE \eqref{eq:fbsde_hetero}. Since
\begin{equation*}
    Y^{1, i}_t = \pr(\tau^0_i \leq T \vert \cal{F}^D_s)\vert_{s = t} \leq 1 = Y^{0, 1}_t
\end{equation*}
and $Y^{0, 1}_t = 1 \geq Y^i_t$ for all $t \in [\varrho, T]$, nonincreasingness and boundedness from below by the maximal solution is clear for $n = 0$. Now assume the claim holds for $n - 1$ for some $n \geq 1$. Then by the induction hypothesis,
\begin{align*}
    \tau^{n + 1}_i = \inf\biggl\{t \in [\varrho, T] \define X^i_t \leq \sum_{j = 1}^N D_{ij} Y^{n, j}_t \biggr\} \leq \inf\biggl\{t \in [\varrho, T] \define X^i_t \leq \sum_{j = 1}^N D_{ij} Y^{n - 1, j}_t \biggr\} = \tau^n_i
\end{align*}
if $i \in \chi$ and $\tau^{n + 1}_i = \varrho = \tau^n_i$ otherwise. Hence, $Y^{n + 1, i}_t = \pr(\tau^{n + 1}_i \leq T \vert \cal{F}^D_s)\vert_{s = t} \leq \pr(\tau^n_i \leq T \vert \cal{F}^D_s)\vert_{s = t} = Y^{n, i}_t$ for $t \in [\varrho, T]$. Similarly, using that $Y^{n, i}_t \geq Y^i_t$, we deduce that $\tau^{n + 1}_i \leq \tau_i$, whereby $Y^{n + 1, i}_t \geq Y^i_t$ for $t \in [\varrho, T]$. This concludes the induction. 

Next, we want to pass to the limit in the sequence $(Y^{n, i})_{i \in [N]}$. First, since the sequence of stopping time $(\tau^n_i)_{n \geq 1}$ is nondecreasing and bounded from above by $\tau_i$, it converges to a limit $\tilde{\tau}_i \leq \tau_i$ almost surely. Let us define the process $(\tilde{Y}^i_t)_{t \in [\varrho, T]}$ for $i \in [N]$, by $\tilde{Y}^i_t = \pr(\tilde{\tau}_i \leq T \vert \cal{F}^D_s)\vert_{s = t}$. It follows from the monotone convergence theorem that
\begin{equation*}
    \tilde{Y}^i_{\tau} = \pr(\tilde{\tau}_i \leq T \vert \F^D_{\tau}) = \lim_{n \to \infty} \pr(\tau^n_i \leq T \vert \F^D_{\tau}) = \lim_{n \to \infty} Y^{n, i}_{\tau} \geq Y^i_{\tau}
\end{equation*}
a.s.\@ for all $\tau \in \cal{T}_{\varrho, T}$. Moreover, by Doob's martingale inequality and the convergence $Y_T^{n,i} \rightarrow \tilde{Y}^i_T$, we have 
\begin{equation*}
    \ev \sup_{t \in [\varrho, T]} \lvert Y^{n, i}_t - \tilde{Y}^i_t\rvert^2 \leq 4 \ev \lvert Y^{n, i}_T - \tilde{Y}^i_T\rvert^2 \to 0.
\end{equation*}
Along a subsequence, which for simplicity we denote by the same index, this gives uniform convergence almost surely. Consequently, 
\begin{equation*}
    0 \geq \lim_{n \to \infty} \biggl(X^i_{\tau^n_i} - \sum_{j = 1}^N D_{ij} Y^{n, j}_{\tau^n_i}\biggr) = X^i_{\tilde{\tau}_i} - \sum_{j = 1}^N D_{ij} \tilde{Y}^j_{\tilde{\tau}_i}
\end{equation*}
so that $\tilde{\tau}_i$ is lower bounded by the first hitting time of $X^i_t - \sum_{j = 1}^N D_{ij} \tilde{Y}^j_t$ of $(-\infty, 0]$ if $i \in \chi$. The reverse inequality is obvious since $\tilde{Y}^j$ lies below $Y^{n, j}$ for $j \in [N]$. Therefore, if $i \in \chi$, then $\tilde{\tau}_i$ is the first time in $[\varrho, T]$ that $X^i_t - \alpha \sum_{j = 1}^N D_{ij} \tilde{Y}^j_t$ visits $(-\infty, 0]$. If $i \notin \chi$, then we trivially have $\tilde{\tau}_i = \varrho$. Hence, $(X^i, \tilde{Y}^i, \tilde{Z}^i)_{i \in [N]}$, with $\bb{F}^D$-progressively measurable $\tilde{Z}^i \in \bb{H}^{2, N}_{\varrho, T}$ such that
\begin{equation*}
    \tilde{Y}^i_t = \bf{1}_{\{\tilde{\tau}_i \leq T\}} - \int_t^T \tilde{Z}^i_s \cdot \d W_s
\end{equation*}
for $t \in [\varrho, T]$, provided by the martingale representation theorem, is a solution to FBSDE \eqref{eq:fbsde_hetero}. Note that $\tilde{Y}^i_t \geq Y^i_t$ for $t \in [\varrho, T]$ by construction, so the maximality of $(X^i, Y^i, Z^i)_{i \in [N]}$ immediately implies that $(X^i, \tilde{Y}^i, \tilde{Z}^i)_{i\ in [N]}$ equals the maximal solution $(X^i, Y^i, Z^i)_{i \in [N]}$, as desired.
\end{proof}

Note that the proof of Proposition \ref{prop:maximal} cannot be straightforwardly adjusted to obtain an approximation of the minimal solution from below. Indeed, initiating the sequence $(Y^{n, i}, Z^{n, i})_{i \in [N]}$, $n \geq 0$, with $Y^{0, i}_t = 0$ and $Z^{0, i}_t = 0$ for $t \in [\varrho, T]$, the corresponding sequence of stopping times $\tau^n_i$, $n \geq 1$, would be nonincreasing instead of nondecreasing, so that one cannot readily deduce the a.s.\@ convergence $Y^{n, i}_T = \bf{1}_{\tau^n_i \leq T} \to \bf{1}_{\tilde{\tau}_i \leq T} = \tilde{Y}^i_T$. Furthermore, even if this convergence were to hold, it still would not immediately follow that $\tilde{\tau}_i$ is the first hitting time of $X^i_t - \sum_{j = 1}^N D_{ij} \tilde{Y}^j_t$ on $(-\infty, 0]$.

As mentioned above, we will later see that FBSDE \eqref{eq:fbsde_hetero} in fact admits a unique solution, so that the minimal and maximal solutions from Theorem \ref{thm:existence} coincide. Hence, the present statement should be understood as saying that the unique solution of FBSDE \eqref{eq:fbsde_hetero} can be straightforwardly obtained as the monotonic limit of a decreasing sequence, while an approximation from below is a more subtle question.

For the remainder of the paper, we shall now switch gears and consider FBSDE \eqref{eq:fbsde_hetero} from an analytic point of view. This turns out to be a much more fruitful avenue for a detailed understanding of the problem.

\section{Moving Boundary PDE for FBSDE \texorpdfstring{\eqref{eq:fbsde_hetero}}{(EQ)}} \label{sec:moving_boundary_pde}

The goal of this section is to identify and analyse a system of PDEs with moving boundaries whose solution will serve as a decoupling field for FBSDE \eqref{eq:fbsde_hetero}. The system of PDEs describes the conditional killing probabilities $Y^i_t = \pr(\tau_i \leq T \vert \F_t)$ for a solution to FBSDE \eqref{eq:fbsde_hetero} for different configurations of the particle system. Each configuration $I \subset [N]$ corresponds to a different subset of particles initially assumed to be alive. For any configuration $I$ and any particle $i \in [N]$, we are given a function $v^{I, i} \define [0, T] \times \R^I \to \R$ such that if the system is started from initial states $x = (x_j)_{j \in I}$ at time $t \in [0, T]$, then $Y^i_t = v^{I, i}_t(x)$. Note that the decoupling field $v^{I, i}$ only depends on the states of the initially alive particle $j \in I$, since killed particles do not contribute to the evolution of $Y^i$. The domain of the PDE for $v^{I, i}$ for a given configuration $I \subset [N]$ consists of the set of states $x \in \R^I$ for which all particles remain alive. The boundary of this region changes over time, resulting in a moving boundary problem. On the boundary of the domain, at least one of the particles $j \in I$ is killed, yielding a configuration $I \setminus \{j\}$ with one fewer particle. This establishes a connection between the decoupling fields amongst different configurations. 

To rigorously formulate this PDE, we require some notation. We recall that $[N] = \{1, \dots, N\}$ and denote by $\cal{P}_N$ the power set of $[N]$. For $n \in [N]$, we let $\P_N^n$ consist of all elements of $\P_N$ with cardinality at most $n$. For sets $J \subset I \subset [N]$ and $x \in \R^I$, we introduce the notation $x^J = (x_i)_{i \in J}$. In particular, for $i \in I$, we set $x^{-i} = x^{I \setminus i}$. Here $I \setminus i$ is shorthand for $I \setminus \{i\}$ and we use the convention $\R^{\varnothing} = \{0\}$. Next, for a family $v = (v^{J, j})_{j \in [N], J \in \P_N^n}$ of functions $v^{J, j} \define [0, T] \times \R^J \to \R$, with $n \in \{0, \dots, N\}$, and nonempty $I \in \P_N$ with $\lvert I \rvert \leq (n + 1) \land N$, define the \textit{domains}
\begin{equation} \label{eq:domain}
    \cal{D}^I_Tv = \biggl\{(t, x) \in [0, T] \times \R^I \define x_i > \sum_{j = 1}^N D_{ij} v^{I \setminus i, j}_t(x^{-i}) \text{ for } i \in I\biggr\}
\end{equation}
and $\cal{D}^Iv = \cal{D}^I_Tv \setminus (\{T\} \times \R^I)$, the \textit{index set} of killed particles
\begin{equation} \label{eq:index_set}
    \cal{I}^Iv(t, x) = \biggl\{i \in I \define x_i \leq \sum_{j = 1}^N D_{ij} v^{I \setminus i, j}_t(x^{-i})\Biggr\}
\end{equation}
for $(t, x) \in [0, T] \times \R^I$, as well as the \textit{boundary functions} $\cal{F}^I_iv \define [0, T] \times \R^I \to \R$, $i \in I$, by
\begin{equation} \label{eq:bc}
    \cal{F}^I_iv(t, x) = v^{I \setminus i_0, i}_t(x^{-i_0})
\end{equation}
for $(t, x) \in [0, T] \times \R^I$, where $i_0$ is the minimal index in $\cal{I}^Iv(t, x)$. Note that the family $v = (v^{I, i})_{i \in [N], I \in \P_N^n}$ is only defined for elements in $\P_N^n$ but determines the domains, index set, and boundary functions for all subsets $I$ in $\P_N$ with cardinality at most $(n + 1) \land N$. Note further that the definitions \eqref{eq:domain}, \eqref{eq:index_set}, and \eqref{eq:bc} for this $I$ only draw on $v^{I \setminus i, j}$ for $j \in [N]$. That is, only levels of the PDE below $I$ are required.

Let us remark here that the definition of the domains $\cal{D}^I_T v$ and $\cal{D}^I v$, demarcating states $x \in \R^I$ where all particles are alive, is perhaps surprising at first glance. Indeed, in the probabilistic formulation, FBSDE \eqref{eq:fbsde_hetero}, an alive particle $i \in I$ is killed the first time $t \in [0, T]$ that its state $X^i_t$ crosses below $\sum_{j = 1}^N D_{ij} Y^j_t$. Thus, the anticipated relationship $Y^j_t = v^{I, j}_t(\bf{X}^I_t)$ suggests that the domain should be given by
\begin{equation*}
    \cal{D}^I_Tv = \biggl\{(t, x) \in [0, T] \times \R^I \define x_i > \sum_{j = 1}^N D_{ij} v^{I, j}_t(x^{-i}) \text{ for } i \in I\biggr\}.
\end{equation*}
Note that in the above, the occurrence of $v^{I \setminus i, j}$ in \eqref{eq:domain} is replaced by $v^{I, j}$. Since, as we shall see below, $\cal{D}^I_Tv$ serves as a domain for the PDE satisfied by $v^{I, j}$, $j \in [N]$, this would mean that the domain depends on the solution $v^{I, j}$ itself. What we are exploiting in order to circumvent this circularity, is that at the time $t \in [0, T]$ at which one of the particles $i \in I$ is actually killed, we will have that $Y^j_t = v^{I \setminus i, j}_t(\bf{X}^{I \setminus i}_t)$ for $j \in [N]$. Thus, as long as
\begin{equation*}
    X^i_t > \sum_{j = 1}^N D_{ij} v^{I \setminus i, j}_t(\bf{X}^{I \setminus i}_t),
\end{equation*}
no killing event can have occurred. This motivates the replacement of $v^{I, j}$ by $v^{I \setminus i, j}$ in the definition of the domain in \eqref{eq:domain}.

Let us finally move to the statement of the PDEs for the decoupling field. For a family $v = (v^{I, i})_{i \in [N], I \in \P_N}$ of functions $v^{I, i} \define [0, T] \times \R^I \to \R$, we consider the cascade of PDEs, indexed by $I \in \P_N$:
\begin{equation} \label{eq:pde_moving_boundary}
    \begin{cases}
        \partial_t v^{I, i}_t(x) + \frac{\sigma^2}{2} \Delta v^{I, i}_t(x) = 0 & \text{for } (t, x) \in \cal{D}^I v , \\
        v^{I, i}_t(x) = \cal{F}^I_iv(t, x) & \text{for } (t, x) \in ([0, T] \times \R^I) \setminus \cal{D}^I_Tv, \\
        v^{I, i}_T(x) = 0 &  \text{for } (T, x) \in \cal{D}^I_Tv
    \end{cases}
\end{equation}
for all $i \in I$ and $v^{I, i}_t(x) = 1$ for $(t, x) \in [0, T] \times \R^I$ for all $i \in [N] \setminus I$. Let us explicate PDE \eqref{eq:pde_moving_boundary} in words. If the particle is already killed, i.e.\@ $i \notin I$, its conditional killing probability is one, so $v^{I, i}_t(x) = 1$. Next, if $i \in I$, then in the domain $\cal{D}^I v$, the decoupling field $v^{I, i}$ solves a simple heat equation, corresponding to the fact that the state is driven by a Brownian motion (with volatility $\sigma$). Outside the domain $\cal{D}^I_T v$, at least one of the particles in $I$ is dead, so $v^{I, i}_t(x)$ coincides with the value
\begin{equation*}
    \F^I_i v(t, x) = v^{I \setminus i_0, i}_t(x^{-i_0})
\end{equation*}
of the reduced system $I \setminus i_0$, where $i_0$ is the dead particle with the smallest minimal index. This is a rather arbitrary choice and as we prove in Lemma \ref{lem:mon_well} below, that the value $v^{I \setminus j, i}_t(x^{-j})$ agrees among all indices $j \in I$ of dead particles. Finally, if all particles are alive at the final time $T$, no risk of being killed remains, so that $v^{I, i}_T(x) = 0$.

We are interested in classical solutions of PDE \eqref{eq:pde_moving_boundary}. Let us carefully define this concept.

\begin{definition} \label{def:classical_solution}
We say that a family $v = (v^{I, i})_{i \in [N], I \in \P_N^n}$ of functions $v^{I, i} \define [0, T] \times \R^I \to \R$ is a \textit{classical solution} of the system of PDEs \eqref{eq:pde_moving_boundary} up to level $n \in [N]$ if for all $I \in \P_N^n$, we have
\begin{enumerate}[noitemsep, label = (\roman*)]
    \item \label{it:regularity} $\cal{D}^Iv$ is an open subset of $[0, T) \times \R^I$, $v^{I, i} \in C^{1, 2}(\cal{D}^Iv)$, and $v^{I, i} \in C_b([0, T) \times \R^I)$;
    \item \label{it:pde} $v^{I, i}$ satisfies the equations in \eqref{eq:pde_moving_boundary};
    \item \label{it:terminal} for all compact sets $K \subset \R^I$ such that $\{T\} \times K \subset \cal{D}^I_Tv$,
    \begin{equation*}
        \lim_{t \to T} \sup_{x \in K} \bigl\lvert v^{I, i}_t(x) - v^{I, i}_T(x)\bigr\rvert = 0
    \end{equation*}
\end{enumerate}
if $i \in I$ and $v^{I, i}_t(x) = 1$ for $(t, x) \in [0, T] \times \R^I$ if $i \in [N] \setminus I$.

A \textit{classical solution} of the system of PDEs \eqref{eq:pde_moving_boundary} is a classical solution up to level $N$.
\end{definition}

The differentiability assumption in \ref{it:regularity} is required for heat equation in \eqref{eq:pde_moving_boundary} to be classically well-posed. The continuity of the solution is used to establish uniqueness for PDE \eqref{eq:pde_moving_boundary} and is crucial for the verification result, Theorem \ref{thm:verification}, which connects PDE \eqref{eq:pde_moving_boundary} with FBSDE \eqref{eq:fbsde_hetero}. Lastly, Property \ref{it:terminal} is another ingredient employed in the uniqueness proof for PDE \eqref{eq:pde_moving_boundary}.

Next, let us introduce the concept of a nonincreasing solution. This turns out to be a quite useful property for the analysis of PDE \eqref{eq:pde_moving_boundary}, which is is naturally satisfied by the unique classical solution. From the perspective of the particle system, it states that the further away from the final time one starts the system and the lower the states of the individual particles are, the higher the probability of being killed until the terminal horizon.

\begin{definition}
We say that a classical solution $(v^{I, i})_{i \in [N], I \in \P_N^n}$ of PDE \eqref{eq:pde_moving_boundary} (up to level $n \in [N]$) is \textit{nonincreasing} if for all $I \in \P_N^n$ and $i \in I$, it holds that
\begin{equation*}
    v^{I, i}_t(x) \leq v^{I, i}_s(y)
\end{equation*}
whenever $(t, x)$, $(s, y) \in [0, T] \times \R^I$ with $s \leq t$ and $y \leq x$.
\end{definition}

Our first objective will be to prove the existence and uniqueness of a classical solution to the system of PDEs \eqref{eq:pde_moving_boundary}. Subsequently, we establish a rigorous connection between PDE \eqref{eq:pde_moving_boundary} and FBSDE \eqref{eq:fbsde_hetero} by constructing a solution $(X^i, Y^i, Z^i)_{i \in [N]}$ to the latter from a solution $(v^{I, i})_{i \in [N], I \in \P_N}$ of the former. As hinted to earlier, $v^{I, i}_t(x)$ will provide the conditional killing probability $Y^i_t$ if at time $t$ the set of living particles is $I$ and the state of particle $j \in I$ is given by $X^j_t = x_j$.

Let us conclude these introductory remarks regarding PDE \eqref{eq:pde_moving_boundary} with a numerical illustration of its solution in the symmetric setting $D_{ij} = \alpha$ for $i \in [N]$ and some $\alpha > 0$. From the specification of the PDE, we immediately obtain that $v^{\emptyset, i}_t(0) = 1$. Plugging this into the equation for the first level, we see that the boundary stays fixed in time, which allows us to solve the first level explicitly. Indeed, for $I \in \P^1_N$, we have $v^{I, j}_t(x) = 1$ if $j \notin I$, while for $i \in I$, the function $v^{I, i}$ satisfies
\begin{equation*}
    \partial_t v^{I, i}_t(x) + \frac{\sigma^2}{2} \partial_x^2 v^{I, i}_t(x) = 0
\end{equation*}
for $(t, x) \in [0, T) \times (\alpha N, \infty)$, with the boundary condition $v^{I, i}_t(x) = 1$ for $(t, x) \in [0, T] \times (-\infty, \alpha N]$ and the terminal condition $v^{I, i}_T(x) = 0$ for $x \in (\alpha N, \infty)$. One may verify that the unique solution to this PDE is given by
\begin{equation*}
    v^{I, i}_t(x) = 2\Phi\biggl(\frac{(\alpha N - x) \land 0}{\sqrt{T - t}}\biggr).
\end{equation*}
In particular, the solution does not depend on which singleton $I \in \P^1_N$ we select.

\begin{figure}[tb] 
    \makebox[\linewidth][c]{
    \begin{subfigure}[b]{0.5\columnwidth}
        \centering
        \includegraphics[width=\columnwidth]{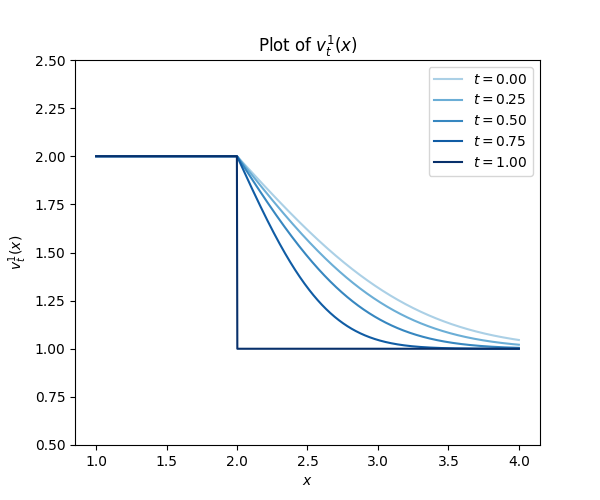}
    \end{subfigure}

    \hspace{-0.5cm}

    \begin{subfigure}[b]{0.555\columnwidth}
        \centering
        \includegraphics[width=\columnwidth]{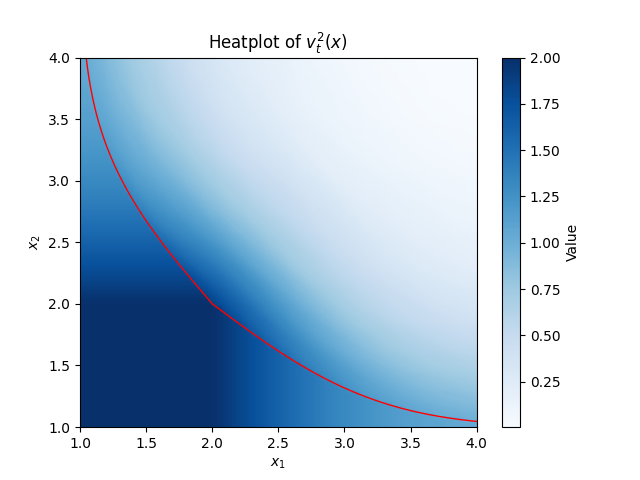}
    \end{subfigure}
    }
    \caption{The plots show the conditional killing probability for a system with two particles. The left-hand side shows the probability if one particle was already removed, while on the right-hand side both particles are initially alive.}
    \label{fig:solution_plot}
 \end{figure}

The higher levels cannot be solved explicitly, so we use a numerical scheme based on the construction of the solution used in the proof of Theorem \ref{thm:pde_exist_unique} below. We simulate a system with two particles and parameters $\sigma = 1$, $\alpha = 1$, and $T = 1$. On the left hand side of Figure \ref{fig:solution_plot}, we plot the total conditional killing probability in the case that one particle has already been killed. That is, the plot shows $x \mapsto v^1_t(x) = v^{I, i}_t(x) + 1$ for arbitrary $i \in I \in \P^1_N$ at different times $t \in [0, 1]$. We can see that if the state $x$ lies below the killing threshold $\alpha N = 2\alpha$, both particle are killed, so the total conditional killing probability equals two. As the state moves further away from the boundary, the probability is reduced. 

The plot on the right-hand side shows a heatmap of total conditional killing probability
\begin{equation*}
    x \mapsto v^2_t(x) = v^{[2], 1}_t(x) + v^{[2], 2}_t(x)
\end{equation*}
for $t = 0$, when none of the particles was initially removed. Dark blue colouration corresponds to a high killing probability, while a low probability is indicated in light blue. The red line demarcates the domain's boundary. Again the probability of being killed falls with the distance of the states to the boundary. Below the boundary, at least one of the particle is dead, while in the dark blue square in the lower left, both particles are removed. Note that the boundary has a kink at $x = (2\alpha, 2\alpha)$. In higher dimensions, the time-regularity of the boundary at these kinks is not straightforwardly verified. If the boundary were to recede too rapidly at such kinks as time unfolds, the solution constructed in the proof of Theorem \ref{thm:pde_exist_unique} could fail to be continuous up to the boundary at the kink. Guaranteeing that such a situation cannot arise is the main challenge in the proof of Theorem \ref{thm:pde_exist_unique}.

\subsection{Existence and Uniqueness for PDE  \texorpdfstring{\eqref{eq:pde_moving_boundary}}{(EQ)}}

We begin by stating the existence and uniqueness result for PDE \eqref{eq:pde_moving_boundary}.

\begin{theorem} \label{thm:pde_exist_unique}
There exists a unique classical solution $(v^{I, i})_{i \in [N], I \in \P_N}$ to the system of PDEs \eqref{eq:pde_moving_boundary}. Moreover, the solution is nonincreasing and for all $I \in \P_N$ and $i \in I$, it holds that $0 \leq v^{I, i} \leq 1$.
\end{theorem}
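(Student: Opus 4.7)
The plan is to proceed by induction on the level $n = |I|$, building the family one cardinality at a time via a stochastic representation. The base case $n = 0$ is immediate: for $I = \emptyset$, every $i \in [N]$ lies outside $I$, so $v^{\emptyset, i} \equiv 1$ and all the required properties hold vacuously.

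For the inductive step, suppose the theorem has been established on $\P_N^n$, so that for every $J \in \P_N^n$ and every $j \in [N]$ the function $v^{J, j}$ is continuous on $[0, T) \times \R^J$, bounded in $[0, 1]$, and nonincreasing jointly in $(t, x)$. Fix $I \in \P_N$ with $|I| = n + 1$ and $i \in I$, and set
\[
    b^{I, i}(t, y) = \sum_{j = 1}^N D_{ij} v^{I \setminus i, j}_t(y), \qquad (t, y) \in [0, T] \times \R^{I \setminus i},
\]
which is entirely determined by previously constructed levels. The inductive hypothesis together with $D_{ij} \geq 0$ implies that $b^{I, i}$ is jointly continuous on $[0, T) \times \R^{I \setminus i}$, bounded, and nonincreasing in each of its arguments, so $\cal{D}^I v$ is an open moving domain with nonincreasing boundary in time. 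I then propose to define
\[
    v^{I, i}_t(x) = \ev\bigl[\mathbf{1}_{\{\tau < T\}} \cal{F}^I_i v(\tau, \bf{X}_\tau)\bigr], \qquad (t, x) \in \cal{D}^I_T v,
\]
where $\bf{X}$ is a $|I|$-dimensional Brownian motion with independent components and volatility $\sigma$ started at $x$ at time $t$, and $\tau$ is the first time after $t$ at which $\bf{X}$ exits $\cal{D}^I_T v$. Outside $\cal{D}^I_T v$ I set $v^{I, i}_t(x) = \cal{F}^I_i v(t, x)$, and $v^{I, i}_T(x) = 0$ for $(T, x) \in \cal{D}^I_T v$. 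Well-posedness of the boundary datum $\cal{F}^I_i v$ at points where multiple coordinates are active reduces to the consistency statement that $v^{I \setminus j, i}_t(x^{-j})$ does not depend on the chosen active index $j$; this is Lemma \ref{lem:mon_well}, which follows from the inductive hypothesis together with the nonincreasing property. Interior $C^{1, 2}$ regularity and the heat equation in \eqref{eq:pde_moving_boundary} are then obtained from the strong Markov property combined with standard interior estimates.

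The main obstacle is to verify continuity of $v^{I, i}$ on $[0, T) \times \R^I$ and the terminal property \ref{it:terminal}. The moving boundary $\partial \cal{D}^I_T v$ has spatial kinks where two or more coordinates meet their thresholds simultaneously; the time regularity of the boundary at these kinks is not obvious from the construction, and the derivative in $t$ can blow up near the discontinuities of the terminal condition. Away from the singular set, continuity at a boundary point follows from a standard barrier argument: the monotonicity of $b^{I, i}$ in $t$ furnishes a one-sided cone condition, and Blumenthal's zero--one law applied to the exit time of Brownian motion from the cone pins down the limit. At a kink, Lemma \ref{lem:mon_well} ensures that the boundary values induced by the different active coordinates agree, and one sandwiches $v^{I, i}$ between two auxiliary heat-equation solutions on the half-spaces associated with the active coordinates, both of which are classical by the lower-level analysis. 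The terminal regularity \ref{it:terminal} is handled in the same spirit, using that compact subsets of $\{T\} \times \R^I$ lying in $\cal{D}^I_T v$ are bounded away from the boundary and that $b^{I, i}$ is nonincreasing in $t$, so that the process cannot exit the domain over a vanishingly short time window.

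It remains to propagate the structural properties and to establish uniqueness at level $n + 1$. The bounds $0 \leq v^{I, i} \leq 1$ are immediate from the representation, since the integrand is bounded in $[0, 1]$ by the inductive hypothesis. The nonincreasing property is obtained through a coupling of the driving Brownian motions: for $s \leq t$ and $y \leq x$, one couples the forward processes so that the $(s, y)$-process exits $\cal{D}^I_T v$ no later than the $(t, x)$-process, using that $b^{I, i}$ is nonincreasing in both arguments, and one compares the corresponding boundary values via the nonincreasing property of $\cal{F}^I_i v$ inherited from the lower levels. Finally, if $(\tilde{v}^{I, i})$ is another classical solution, then by the inductive hypothesis $\tilde{v}^{J, j} = v^{J, j}$ for all $J \in \P_N^n$, so $(\tilde{v}^{I, i})$ and $(v^{I, i})$ share the same domain, boundary function, and terminal condition; a Feynman--Kac argument on $\cal{D}^I v$, which also underpins the existence half, then forces $\tilde{v}^{I, i} = v^{I, i}$ and closes the induction.
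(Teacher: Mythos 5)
Your construction via the Feynman--Kac representation and induction on $|I|$ is the same as the paper's, and the treatment of the remaining items (bounds, monotonicity by coupling, uniqueness via It\^o/Feynman--Kac) is also aligned. The gap is in the boundary-continuity argument, which is exactly the step the paper flags as the central difficulty.

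First, the claim that ``the monotonicity of $b^{I,i}$ in $t$ furnishes a one-sided cone condition'' is not correct. Since $v^{I\setminus i,j}$ is nonincreasing in $t$, the threshold $b^{I,i}(\cdot,y)$ decreases and the domain $\cal{D}^I_T v$ \emph{grows} in forward time, so the complement of $\cal{D}^I_T v$ contains no forward cone at a boundary point. This is precisely the danger the paper highlights: the boundary could recede faster than the Brownian motion drops, in which case the exit time from a point converging to the boundary would stay bounded away from $t$. Monotonicity alone does not bound the recession rate, so it cannot deliver a cone and therefore Blumenthal's law is not applicable as you describe. What actually controls the recession rate is that $(t,x^J)\in\cal{D}^J_T v$ for $J = I\setminus\cal{I}^I v(t,x)$ (Lemma~\ref{lem:mult_default_down}), so the lower-level solutions $v^{J,j}$ are $C^{1,2}$, hence locally Lipschitz in $t$, near $(t,x^J)$. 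The boundedness of the time derivative comes from this interior regularity on the lower level, not from monotonicity.

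Second, the sandwiching argument at kinks is too vague to stand, and it is not the route the paper takes. The paper treats kinks and non-kink boundary points uniformly: it combines the locally bounded time-Lipschitz constant of $v^{J,j}$ with the LIL-type fact $\liminf_{s\searrow t}\max_{j\in I}\frac{W^j_s-W^j_t}{s-t}=-\infty$ a.s.\ (so that all driving coordinates simultaneously drop faster than any linear rate arbitrarily soon after $t$), and then appeals to the ``only if'' direction of Lemma~\ref{lem:mult_default} to conclude that if $X^k_s<\sum_j D_{kj}v^{J,j}_s(X^J_s)$ for all $k\in I\setminus J$, then $(s,X_s)\notin\cal{D}^I_T v$. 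That lemma is the decisive ingredient and your proposal never invokes it, nor the LIL-type estimate. Without a concrete replacement, I do not see how the sandwich between ``auxiliary heat-equation solutions on half-spaces'' forces the exit time $\tau_m$ of the approximating processes to converge to $t$; in particular it is unclear what comparison principle would sandwich $v^{I,i}$ between those auxiliary functions with matching boundary data, especially since the domain's time regularity is precisely what is at stake.
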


As a preparation for the proof of Theorem \ref{thm:pde_exist_unique}, we establish a sequence of lemmas, which are grouped by the features of the PDE they address. To avoid repetition, when stating the lemmas, it is assumed that
\begin{equation*}
    v = (v^{I, i})_{i \in [N], I \in \P_N^{n - 1}}
\end{equation*}
is a classical solution of the system of PDEs \eqref{eq:pde_moving_boundary} up to level $n - 1$ for some $n \in [N]$.

\subsubsection{Boundary Condition}

The first lemma shows that the definition of the boundary condition in \eqref{eq:bc} does not depend on the choice of the index $i_0 \in \cal{I}^Iv(t, x)$. In the course of this, we establish the monotonicity between different levels of nonincreasing solutions. 

\begin{lemma} \label{lem:mon_well}
Assume that $v$ is nonincreasing. Then for any nonempty $I \in \P_N^n$ and $(t, x) \in [0, T] \times \R^I$, we have
\begin{enumerate}[noitemsep, label = (\roman*)]
    \item \label{it:well_definedness} $v^{I \setminus \ell, i}_t(x^{-\ell})$ is constant over all $\ell \in \cal{I}^Iv(t, x)$;
    \item \label{it:monotonicity} $v^{J, i}_t(x^J) \geq v^{I, i}_t(x)$ for all $i \in I$ and $J \subset I$.
\end{enumerate}
\end{lemma}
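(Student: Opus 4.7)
The plan is to establish (i) and (ii) jointly by induction on $k = |I|$, alternating within each step: first (i) at level $k$ using (ii) at level $k-1$, then (ii) at level $k$ using (i) at level $k$ together with the nonincreasingness of $v$. The base case $|I| = 1$ is essentially trivial, since (i) is vacuous ($\cal{I}^I v$ contains at most one index) and (ii) reduces to the bound $v^{I, i} \leq 1 = v^{\emptyset, i}$, which follows from the maximum principle applied to the heat equation governing $v^{I, i}$, whose boundary and terminal data lie in $[0, 1]$ (propagated inductively).

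For the inductive step on (i), given distinct $\ell_1, \ell_2 \in \cal{I}^I v(t, x)$, I would aim to show that both $v^{I \setminus \ell_1, i}(t, x^{-\ell_1})$ and $v^{I \setminus \ell_2, i}(t, x^{-\ell_2})$ equal $v^{I \setminus \{\ell_1, \ell_2\}, i}(t, x^{-\{\ell_1, \ell_2\}})$. By (ii) at level $k-1$ applied between $I \setminus \{\ell_1, \ell_2\}$ and $I \setminus \ell_2$, we have $v^{I \setminus \{\ell_1, \ell_2\}, j} \geq v^{I \setminus \ell_2, j}$ pointwise. Combined with $D_{\ell_2 j} \geq 0$ and the defining inequality of $\ell_2 \in \cal{I}^I v(t, x)$, this places $\ell_2$ into $\cal{I}^{I \setminus \ell_1} v(t, x^{-\ell_1})$, so $(t, x^{-\ell_1}) \notin \cal{D}^{I \setminus \ell_1}_T v$ and the boundary condition \eqref{eq:bc} identifies $v^{I \setminus \ell_1, i}(t, x^{-\ell_1})$ with $v^{(I \setminus \ell_1) \setminus k_0, i}(t, x^{-\{\ell_1, k_0\}})$ for the minimal $k_0 \in \cal{I}^{I \setminus \ell_1} v(t, x^{-\ell_1})$; the inductive (i) at level $k-1$ then lets me swap $k_0$ for $\ell_2$, and a symmetric argument handles $v^{I \setminus \ell_2, i}(t, x^{-\ell_2})$.

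For (ii) at level $k$, it suffices to treat $J = I \setminus \ell$, as the general case $J \subsetneq I$ follows by chaining with the primary induction hypothesis along $I \supset I \setminus \ell_1 \supset \cdots \supset J$. Fixing $(t, x^{-\ell})$, the map $y \mapsto v^{I, i}(t, (x^{-\ell}, y))$ is nonincreasing in $y$ by the nonincreasingness of $v$. For $y$ sufficiently small that $\ell \in \cal{I}^I v(t, (x^{-\ell}, y))$---a condition depending only on $y$ and the fixed data---the boundary condition together with (i) at level $k$ just established yields $v^{I, i}(t, (x^{-\ell}, y)) = v^{I \setminus \ell, i}(t, x^{-\ell})$; monotonicity in $y$ then propagates this into the inequality $v^{I, i}(t, (x^{-\ell}, y)) \leq v^{I \setminus \ell, i}(t, x^{-\ell})$ for every $y \in \R$, as desired.

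The main delicacy lies in the careful interleaving of (i) and (ii) within the induction, since each relies on the other at adjacent levels; no circularity arises provided one respects the order. A subsidiary ingredient is the uniform bound $0 \leq v^{I, i} \leq 1$, which propagates through the induction via the maximum principle and is needed at the base case of (ii) as well as implicitly to control the boundary data entering the heat equation at higher levels.
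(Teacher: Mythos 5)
Your proof is correct and follows the same overall strategy as the paper: a joint induction on $|I|$ that interleaves (i) and (ii), using the lower-level monotonicity (ii) to show that a particle dead in $I$ is also dead in the reduced system---so that the boundary condition and (i) at the lower level identify the values---and nonincreasingness to pass from interior points to boundary points. There are two cosmetic differences. First, in the base case $|I|=1$ you invoke the maximum principle to obtain $v^{I,i} \leq 1$; the paper instead gets this bound directly from nonincreasingness by picking $x_0 \leq x$ outside the domain, where the boundary condition forces $v^{I,k_0}_t(x_0) = v^{\varnothing,k_0}_t(0) = 1$, thereby avoiding any Phragm\'en--Lindel\"of considerations on the unbounded moving domain and staying strictly within the lemma's hypotheses. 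Second, you organize the inductive step as (i)-then-(ii), while the paper proves (ii) first for the case $k_0 \in \cal{I}^Iv(t,x)$ and observes that it actually yields an \emph{equality} for arbitrary such $k_0$, obtaining (i) as a by-product before closing (ii) for $k_0 \notin \cal{I}^Iv(t,x)$ via nonincreasingness exactly as you do. Neither difference affects correctness.
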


\begin{proof}
It is clearly enough to show that the first result holds in the case that $I = J \cup \{k_0\}$ for some $k_0 \in I$. The general result then follows by successive application of this special case. So from now on, let us assume that $J$ is of this form. We establish both results by a single induction on the cardinality $m \in [n]$ of the set $I$. To start with, let $I$ be the singleton $\{k_0\}$, so that $J = \varnothing$. Then \ref{it:well_definedness} is trivially satisfied. For \ref{it:monotonicity} we note that if $(t, x) \in [0, T] \times \R^I$ lies outside of the domain $\cal{D}^I_T$, then it follows from the boundary condition that
\begin{equation*}
    v^{I, k_0}_t(x) = v^{I \setminus k_0, k_0}_t(x^{-k_0}) = v^{J, k_0}_t(x^J).
\end{equation*}
On the other hand, if $(t, x)$ is inside the domain, we have by nonincreasingness of the solution that
\begin{equation*}
    v^{I, k_0}_t(x) \leq v^{I, k_0}_t(x_0) = v^{I \setminus k_0, k_0}_t(x_0^{-k_0}) = v^{I \setminus k_0, k_0}_t(x^{-k_0}) = v^{J, k_0}_t(x^J),
\end{equation*}
where $x_0 \leq x$ is some point in $\R^I$ such that $(t, x_0)$ lies outside the domain. Here we used in the second equality that $x_0^{-k_0} = 0 = x^{-k_0}$ since $I$ is a singleton. This concludes the induction start. 

Now, suppose $\lvert I \rvert = m \in \{2, \dots, n\}$ and that the result holds for all sets with cardinality less than or equal to $m - 1$. We establish \ref{it:monotonicity} first. Consider the case that $(t, x) \in [0, T] \times \R^I$ such that $k_0 \in \cal{I}^Iv(t, x)$. Let $i_0$ be the minimal index in $\cal{I}^Iv(t, x)$, so that due to the boundary condition of PDE \eqref{eq:pde_moving_boundary}, we have
\begin{equation*}
    v^{I, i}_t(x) = v^{I \setminus i_0, i}_t(x^{-i_0})
\end{equation*}
for all $i \in I$. By the induction hypothesis for \ref{it:monotonicity}, setting $\tilde{I} = I \setminus i_0$ and $\tilde{x} = x^{-i_0}$, it holds that $v^{\tilde{I}, i}_t(\tilde{x}) \leq v^{\tilde{I} \setminus k_0, i}_t(\tilde{x}^{-k_0})$ for $i \in \tilde{I}$. Furthermore, $v^{\tilde{I}, i}_t(\tilde{x}) = 1 = v^{\tilde{I} \setminus k_0, i}_t(\tilde{x}^{-k_0})$ for $i \in [N] \setminus \tilde{I}$. Since $k_0 \in \cal{I}^Iv(t, x)$ by assumption, this implies
\begin{equation*}
    x_{k_0} \leq \sum_{j = 1}^N D_{k_0j} v^{I \setminus k_0, j}(x^{-k_0}) \leq \sum_{j = 1}^N D_{k_0j} v^{\tilde{I} \setminus k_0, j}(\tilde{x}^{-k_0}),
\end{equation*}
so the particle $k_0$ is dead in the system $\tilde{I}$, by which we mean $k_0 \in \cal{I}^{\tilde{I}}v(t, \tilde{x})$. Thus, it follows from the induction hypothesis for \ref{it:well_definedness} and the boundary condition of PDE \eqref{eq:pde_moving_boundary} that
\begin{equation*}
    v^{\tilde{I}, i}_t(\tilde{x}) = v^{\tilde{I}\setminus k_0, i}_t(\tilde{x}^{-k_0}).
\end{equation*}
From this we can deduce that
\begin{equation*}
    x_{i_0} \leq \sum_{j = 1}^N D_{i_0j} v^{\tilde{I}, j}_t(\tilde{x}) = \sum_{j = 1}^N D_{i_0j} v^{\tilde{I}\setminus k_0, j}_t(\tilde{x}^{-k_0}),
\end{equation*}
meaning that $i_0$ is dead in the system $I \setminus k_0$. Hence, applying the induction hypothesis for \ref{it:well_definedness} and the boundary condition of PDE \eqref{eq:pde_moving_boundary} once more, we find that
\begin{equation*}
    v^{J, i}_t(x^J) = v^{I \setminus k_0, i}_t(x^{-k_0}) = v^{\tilde{I} \setminus k_0, i}_t(\tilde{x}^{-k_0}) = v^{I \setminus i_0, i}_t(x^{-i_0}) = v^{I, i}_t(x)
\end{equation*}
for $i \in I$, giving \ref{it:monotonicity}. Note, moreover, that since the element $k_0 \in \cal{I}^Iv(t, x)$ can be chosen arbitrarily, this also proves \ref{it:well_definedness}. Thus it only remains to conclude the induction step for \ref{it:monotonicity} in the case that $k_0 \notin \cal{I}^Iv(t, x)$. If $k_0$ is not dead in $I$, then by choosing $x_0 \leq x_{k_0}$ small enough such that $k_0 \in \cal{I}^Iv(t, y)$, where $y_i = x_i$ if $i \neq k_0$ and $y_{k_0} = x_0$, we get from the nonincreasingness of the solution that
\begin{equation*}
    v^{I, i}_t(x) \leq v^{I, i}_t(y) = v^{I \setminus k_0, i}_t(y^{-k_0}) = v^{I \setminus k_0, i}_t(x^{-k_0}) = v^{J, i}_t(x^J),
\end{equation*}
where we use that $y^{-k_0} = x^{-k_0}$, since $y$ and $x$ only differ in the component $k_0$. This concludes the induction and the proof.
\end{proof}

From Lemma \ref{lem:mon_well} \ref{it:well_definedness}, it follows that the boundary condition $\F^I_iv$ is continuous.

\begin{lemma} \label{lem:boundary_cond_cont}
Assume that $v$ is nonincreasing. Then, for all nonempty $I \in \P_N^n$ and $i \in I$, the function $\F^I_iv$ is continuous on $([0, T) \times \R^I) \setminus \cal{D}^Iv$.
\end{lemma}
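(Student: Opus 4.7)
The plan is to exploit Lemma~\ref{lem:mon_well}~\ref{it:well_definedness}, which states that $v^{I \setminus \ell, i}_t(x^{-\ell})$ does not depend on the choice of $\ell \in \cal{I}^Iv(t,x)$. This is precisely what neutralises the jumps of the minimal dead-index $i_0 = i_0(t,x)$ appearing in the definition of $\cal{F}^I_iv$; once that feature is available, continuity reduces to a straightforward subsequence argument since $I$ is finite.

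Concretely, I would fix $(t,x) \in ([0,T) \times \R^I) \setminus \cal{D}^Iv$, so that $\cal{I}^Iv(t,x) \neq \varnothing$, and an arbitrary sequence $(t_n, x_n)$ in this same set converging to $(t,x)$. Let $i_n$ denote the minimal element of $\cal{I}^Iv(t_n, x_n)$ and $i_0$ the minimal element of $\cal{I}^Iv(t,x)$. Since $I$ is finite, I pass to a subsequence along which $i_n \equiv i^\ast$ is constant. The first thing to verify is that $i^\ast \in \cal{I}^Iv(t,x)$: the defining inequality
\begin{equation*}
	x_{n, i^\ast} \leq \sum_{j = 1}^N D_{i^\ast j} v^{I \setminus i^\ast, j}_{t_n}(x_n^{-i^\ast})
\end{equation*}
passes to the limit because each $v^{I \setminus i^\ast, j}$ lies in $C_b([0,T) \times \R^{I \setminus i^\ast})$ by Definition~\ref{def:classical_solution}~\ref{it:regularity}, and $t < T$ allows evaluation at the limit.

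Once $i^\ast \in \cal{I}^Iv(t,x)$ is secured, Lemma~\ref{lem:mon_well}~\ref{it:well_definedness} gives $v^{I \setminus i^\ast, i}_t(x^{-i^\ast}) = v^{I \setminus i_0, i}_t(x^{-i_0}) = \cal{F}^I_iv(t,x)$, while the definition $\cal{F}^I_iv(t_n, x_n) = v^{I \setminus i^\ast, i}_{t_n}(x_n^{-i^\ast})$ combined with continuity of $v^{I \setminus i^\ast, i}$ yields $\cal{F}^I_iv(t_n, x_n) \to \cal{F}^I_iv(t,x)$ along the subsequence. Since every subsequence of the original sequence admits a further subsequence with the same limit $\cal{F}^I_iv(t,x)$, the full sequence converges, establishing continuity of $\cal{F}^I_iv$ at $(t,x)$.

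The only genuine obstacle is the potential jumping of the map $(t,x) \mapsto i_0(t,x)$ when the set of dead indices changes; any naive attempt to argue continuity by tracking a single index would break down there. The key insight is that Lemma~\ref{lem:mon_well}~\ref{it:well_definedness} is tailor-made to absorb this, reducing the argument to the (essentially topological) fact that the weak inequalities defining $\cal{I}^Iv$ are preserved under limits of $(t_n,x_n)$ staying in $[0,T) \times \R^I$.
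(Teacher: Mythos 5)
Your proposal is correct and follows essentially the same route as the paper: fix a sequence in $([0,T)\times\R^I)\setminus\cal{D}^Iv$ converging to $(t,x)$, extract a subsequence along which the minimal dead index is constant, pass to the limit in the weak inequality defining that index to show it remains dead at $(t,x)$, and then invoke Lemma~\ref{lem:mon_well}~\ref{it:well_definedness} together with continuity of the lower-level decoupling fields and a standard subsequence argument. The only cosmetic difference is that the paper first passes to a subsequence along which the function values $\F^I_iv(t_m,x^m)$ converge and then stabilises the index, whereas you stabilise the index first; the content is identical.
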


\begin{proof} 
From Lemma \ref{lem:mon_well} \ref{it:well_definedness}, we know that 
\begin{equation} \label{eq:bc_all}
    \F^I_iv(t, x) = v^{I \setminus k, i}_t(x^{-k})
\end{equation}
for all $k \in \cal{I}^Iv(t, x)$. Now, suppose that $(t_m, x^m)_{m \geq 1}$ is a sequence in $([0, T) \times \R^I) \setminus \cal{D}^Iv$ which converges to $(t, x) \in ([0, T) \times \R^I) \setminus \cal{D}^Iv$. Let $i_m$ be the minimal element of $\cal{I}^Iv(t_m, x^m)$. We will show that any convergent subsequence of
\begin{equation*}
    \F^I_iv(t_m, x^m) = v^{I \setminus i_m, i}_{t_m}\bigl((x^m)^{-i_m}\bigr)
\end{equation*}
has the same limit $\F^I_iv(t, x)$. Since the latter is independent of the convergent subsequence, the desired continuity statement
\begin{equation*}
    \lim_{m \to \infty} \F^I_iv(t_m, x^m) = \F^I_iv(t, x)
\end{equation*}
follows. So, fix a convergence subsequence with corresponding subsequence of indices $(m_{\ell})_{\ell \geq 1}$. By selecting a further subsequences if necessary, we may assume that the indices $j_{\ell} = i_{m_{\ell}}$ converge to some $i_0 \in I$. In particular, the sequence $(j_{\ell})_{\ell \geq 1}$ becomes stationary for $\ell$ large enough. Hence, using the continuity of $v^{I \setminus i_0, j}$ on $[0, T) \times \R^{I_0}$, we find that
\begin{equation*}
    x_{i_0} = \lim_{\ell \to \infty} y^{\ell}_{j_{\ell}} \leq \lim_{\ell \to \infty} \sum_{j = 1}^N D_{j_{\ell}j} v^{I \setminus j_{\ell}, j}_{s_{\ell}}\bigl((y^{\ell})^{-j_{\ell}}\bigr) = \sum_{j = 1}^N D_{i_0j} v^{I \setminus i_0, j}_t(x^{-i_0}),
\end{equation*}
where $s_{\ell} = t_{m_{\ell}}$ and $y^{\ell} = x^{j_{\ell}}$. This means that $i_0 \in \cal{I}^Iv(t, x)$, so by \eqref{eq:bc_all}, we have
\begin{align*}
    \lim_{m \to \infty} \F^I_iv(t_m, x^m) &= \lim_{\ell \to \infty} \F^I_iv(s_{\ell}, y^{\ell}) \\
    &= \lim_{\ell \to \infty} v^{I \setminus j_{\ell}, i}_{s_{\ell}}\bigl((y^{\ell})^{-j_{\ell}}\bigr) \\
    &= v^{I \setminus i_0, i}_t(x^{-i_0}) \\
    &= \F^I_iv(t, x).
\end{align*}
This concludes the proof.
\end{proof}

Depending on the location of a state $x \in \R^I$ outside the domain $\cal{D}^I_T v$, more than one particle in $I$ can be dead. To address this situation, the following lemma provides a necessary and sufficient condition for a subset of particles in $I$ to be killed, which only draws on the decoupling field for the configuration $J \subset I$ of the remaining particles. 

\begin{lemma} \label{lem:mult_default}
Assume that $v$ is nonincreasing. Let $I \in \P_N^n$ be nonempty, $(t, x) \in [0, T] \times \R^I$, and $J \subset I$. Then $x_i \leq \sum_{j = 1}^N D_{ij} v^{J, j}_t(x^J)$ for all $i \in I \setminus J$ if and only if $I \setminus J \subset \cal{I}^Iv(t, x)$. In particular,
\begin{equation} \label{eq:bc_down}
    v^{J, i}_t(x) = v^{I, i}_t(x)
\end{equation}
for all $J \subset I$ with $I \setminus J \subset \cal{I}^I v(t, x)$ and $i \in I$.
\end{lemma}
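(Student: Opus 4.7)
The forward implication ($\Leftarrow$) will be an immediate consequence of the inter-level monotonicity in Lemma \ref{lem:mon_well}\ref{it:monotonicity}. Fixing $i \in I \setminus J$ I have $J \subset I \setminus i$, so $v^{I\setminus i, j}_t(x^{-i}) \leq v^{J, j}_t(x^J)$ for every $j \in I \setminus i$. Using the convention $v^{K, j} \equiv 1$ for $j \notin K$, the same comparison extends trivially to $j \notin I \setminus i$ (either $j \notin I$, so $j \notin J$, or $j = i$, and $i \notin J$ by hypothesis). Multiplying against $D_{ij} \geq 0$, summing, and combining with $i \in \cal{I}^I v(t,x)$, I obtain $x_i \leq \sum_j D_{ij} v^{I\setminus i, j}_t(x^{-i}) \leq \sum_j D_{ij} v^{J,j}_t(x^J)$.

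The backward implication ($\Rightarrow$) and the ``In particular'' identity I would prove simultaneously by induction on $k := |I \setminus J|$. The case $k = 0$ is vacuous, and $k = 1$ follows at once from the definition of $\cal{I}^I v(t,x)$ together with the PDE boundary condition in \eqref{eq:pde_moving_boundary} and the well-definedness statement in Lemma \ref{lem:mon_well}\ref{it:well_definedness}. For the inductive step with $k \geq 2$, I will fix an arbitrary $i^* \in I \setminus J$ and set $I' := I \setminus \{i^*\}$, so that $|I' \setminus J| = k - 1$. The hypothesis $x_i \leq \sum_j D_{ij} v^{J,j}_t(x^J)$ for $i \in I \setminus J$ persists on the subset $I' \setminus J$, so the inductive hypothesis, applied to the configuration $I'$ at the state $x^{-i^*}$, simultaneously delivers $I' \setminus J \subset \cal{I}^{I'} v(t, x^{-i^*})$ and the identity $v^{I', j}_t(x^{-i^*}) = v^{J, j}_t(x^J)$ for $j \in I'$. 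Using the convention $v^{K, j} \equiv 1$ off $K$ to extend this identity to every $j \in [N]$ (noting that $i^* \notin J$ handles $j = i^*$, and $J \subset I$ handles $j \notin I$), I substitute into the hypothesis at $i = i^*$ to obtain $x_{i^*} \leq \sum_j D_{i^* j} v^{I \setminus i^*, j}_t(x^{-i^*})$, that is, $i^* \in \cal{I}^I v(t, x)$. Since $i^* \in I \setminus J$ was arbitrary, the backward implication follows. For the ``In particular'' identity at level $k$, I would then invoke Lemma \ref{lem:mon_well}\ref{it:well_definedness} combined with the PDE boundary condition to write $v^{I, i}_t(x) = v^{I \setminus i^*, i}_t(x^{-i^*})$ for any $i^* \in I \setminus J \subset \cal{I}^I v(t,x)$, and apply the inductive ``In particular'' identity to equate this with $v^{J, i}_t(x^J)$ for $i \in I'$; the remaining case $i = i^*$ is immediate, since both $v^{I, i^*}_t(x) = v^{I \setminus i^*, i^*}_t(x^{-i^*})$ and $v^{J, i^*}_t(x^J)$ equal $1$ by the convention.

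The only real obstacle is notational bookkeeping: one has to keep track of the convention $v^{K, j} \equiv 1$ for $j \notin K$ when transferring sums over $j \in [N]$ between the three configurations $I$, $I'$, and $J$, and to check that all indices dropped or added in this process sit outside $J$ so that the convention kicks in consistently on both sides. Substantively, the argument draws on nothing beyond Lemma \ref{lem:mon_well} together with the structural content of the PDE's boundary condition.
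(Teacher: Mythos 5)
Your proof is correct, and it takes a genuinely different route from the paper's. For the direction $I \setminus J \subset \cal{I}^I v(t,x) \Rightarrow x_i \leq \sum_j D_{ij} v^{J,j}_t(x^J)$, you give a one-shot argument: $i \in \cal{I}^I v(t,x)$ bounds $x_i$ by $\sum_j D_{ij} v^{I \setminus i, j}_t(x^{-i})$, and Lemma \ref{lem:mon_well}\ref{it:monotonicity}, applied to the pair $J \subset I \setminus i$, dominates each term by $v^{J,j}_t(x^J)$. This bypasses the paper's chain induction $J = J_0 \subset J_1 \subset \cdots \subset J_m = I$, which the authors really need only to manufacture the identity \eqref{eq:bc_down}, and it cleanly exposes that the bare implication uses nothing beyond \ref{it:monotonicity}. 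For the opposite implication, the paper's induction ascends on $J$ within a fixed $I$: it absorbs a single index $j_0 \in I \setminus J$ into $\tilde{J} = J \cup \{j_0\}$, transfers the hypothesis from $J$ to $\tilde{J}$ via Lemma \ref{lem:mon_well}\ref{it:well_definedness} and the boundary condition, and then appeals to the inductive hypothesis for $(I, \tilde{J})$. Your induction instead descends on $I$: you pass to $I' = I \setminus i^*$, apply the inductive hypothesis to $(I', J)$ — using \emph{both} the implication and the identity at the previous level — and then transfer back via the sums over $j$. Because of this, you are forced to carry the identity \eqref{eq:bc_down} as a companion to the forward implication, whereas the paper pairs the identity with the other implication instead. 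Both logical organizations are complete, and each closes out the ``In particular'' claim once both directions of the equivalence are in hand; the trade-off is that your decomposition saves work on one side at the cost of a slightly more entangled induction on the other.
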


\begin{proof}
Fix $I \in \P_N^n$ and $(t, x) \in [0, T] \times \R^I$. Let us begin with the more involved only if statement. We establish the result by induction on the cardinality $m \in [\lvert I\rvert]$ of the set $I \setminus J$. The case $\lvert I \setminus J \rvert = 1$ is trivial. Next, let us assume the result holds for all subsets $J$ of $I$ such that $\lvert I \setminus J \rvert \leq m - 1$ for some $m \in \{2, \dots, \lvert I\rvert\}$. Our goal is to show it also holds for subsets whose complement has cardinality $m$. Let $J \subset I$ have this property and fix $i_0 \in I \setminus J$. Since $I \setminus J$ has at least two elements we can select another index $j_0 \in I \setminus J$, so that
\begin{equation*}
    x_{j_0} \leq \sum_{j = 1}^N D_{j_0j} v^{J, j}_t(x^J).
\end{equation*}
This means that the particle with index $j_0$ is killed when viewed as a member of the system with particles $\tilde{J} = J \cup \{j_0\}$. In particular, owing to Lemma \ref{lem:mon_well} \ref{it:well_definedness}, we have $v^{\tilde{J}, j}_t(x^{\tilde{J}}) = v^{J, j}_t(x^J)$ for $j \in \tilde{J}$, while $v^{\tilde{J}, j}_t(x^{\tilde{J}}) = 1 = v^{J, j}_t(x^J)$ for $j \in [N] \setminus \tilde{J}$. Consequently, for any $i \in I \setminus \tilde{J}$, it holds that
\begin{equation*}
    x_i \leq \sum_{j = 1}^N D_{ij} v^{J, j}_t(x^J) = \sum_{j = 1}^N D_{ij} v^{\tilde{J}, j}_t(x^{\tilde{J}}).
\end{equation*}
But then the set $\tilde{J}$ which contains $i_0$ and whose complement $I \setminus \tilde{J}$ has cardinality $m - 1$ satisfies the same property as $J$ in the statement of the lemma. Thus, by the induction hypothesis, it holds that $i_0 \in \cal{I}^Iv(t, x)$. Since $i_0$ was an arbitrary element of $I \setminus J$, it follows that $I \setminus J \subset \cal{I}^Iv(t, x)$. This concludes the induction.

To establish the if statement, let us fix $J \subset I$ with $I \setminus J \subset \cal{I}^Iv(t, x)$. Note that if $J = \varnothing$, the statement is trivial, so we will assume that $J$ has at least one element. Fix such an index $i_1 \in I \setminus J$ and choose an arbitrary enumeration $i_2$,~\ldots, $i_m$ of the remaining elements of $I \setminus J$. Now, by induction on $k = m$,~\ldots, $1$, we will show that $i_k \in \cal{I}^{J_k}v(t, x^{J_k})$ and $v^{J_{k - 1}, j}_t(x^{J_{k - 1}}) = v^{I, j}_t(x)$ for $j \in I$, where $J_k = J \cup \{i_1, \dots, i_k\}$. For $k = m$, the induction statement is clear since $i_m \in \cal{I}^Iv(t, x)$ by assumption. Next, suppose the result holds for $k + 1$ with $k \in \{1, \dots, m - 1\}$. Then by the induction hypothesis and Lemma \ref{lem:mon_well} \ref{it:monotonicity}, we have
\begin{equation*}
    x_{i_k} \leq \sum_{j = 1}^N D_{i_k j} v^{I, j}_t(x) = \sum_{j = 1}^N D_{i_k j} v^{J_k, j}_t\bigl(x^{J_k}\bigr) \leq \sum_{j = 1}^N D_{i_k j} v^{J_{k - 1}, j}_t\bigl(x^{J_{k - 1}}\bigr),
\end{equation*}
so that $i_k \in \cal{I}^{J_k}v(t, x^{J_k})$. Thus, from Lemma \ref{lem:mon_well} \ref{it:well_definedness} and another application of the induction hypothesis, we obtain
\begin{equation*}
    v^{J_{k - 1}, j}_t(x^{J_{k - 1}}) = v^{J_k, j}_t(x^{J_k}) = v^{I, j}_t(x) 
\end{equation*}
for all $j \in I$. This terminates the induction. For $k = 1$, the induction statement reads 
\begin{equation*}
    x_{i_1} \leq \sum_{j = 1}^N D_{i_1j} v^{J, j}_t(x^J)
\end{equation*}
and $v^{J, j}_t(x^J) = v^{I, j}_t(x)$ for all $j \in I$. Since $i_1$ was an arbitrary element of $I \setminus J$, this proves both the if statement and \eqref{eq:bc_down}.
\end{proof}

The next lemma states that after removing all particles in $\cal{I}^I(t, x)$ from $I$, i.e.\@ the ones which were killed, we obtain a system in which all particles are alive. Moreover, $I \setminus \cal{I}^I(t, x)$ is the maximal subsystem of $I$ with that property.

\begin{lemma} \label{lem:mult_default_down}
Assume that $v$ is nonincreasing. Let $I \in \P_N$ be nonempty and $(t, x) \in [0, T] \times \R^I$. Then $I \setminus \cal{I}^Iv(t, x)$ is the union over all sets $J \subset I$ such that $(t, x^J) \in \cal{D}^J_T v$.
\end{lemma}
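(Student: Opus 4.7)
Let $K = I \setminus \cal{I}^I v(t, x)$. My plan is to prove the two set inclusions: (a) every $J \subset I$ with $(t, x^J) \in \cal{D}^J_T v$ satisfies $J \subset K$, and (b) $K$ itself appears in the union, i.e.\@ $(t, x^K) \in \cal{D}^K_T v$.

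For (a) I would use the between-level monotonicity of Lemma \ref{lem:mon_well} \ref{it:monotonicity}. Fix any $i \in J$; since $J \setminus i \subset I \setminus i$, monotonicity gives $v^{I \setminus i, j}_t(x^{-i}) \leq v^{J \setminus i, j}_t(x^{J \setminus i})$ for $j \in I \setminus i$, while for $j \notin I \setminus i$ the inclusion $J \setminus i \subset I \setminus i$ forces both fields to equal $1$. Summing against the nonnegative weights $D_{ij}$ and using the strict inequality $x_i > \sum_{j = 1}^N D_{ij} v^{J \setminus i, j}_t(x^{J \setminus i})$ coming from $(t, x^J) \in \cal{D}^J_T v$ yields $x_i > \sum_{j = 1}^N D_{ij} v^{I \setminus i, j}_t(x^{-i})$, i.e.\@ $i \notin \cal{I}^I v(t, x)$, so $i \in K$.

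For (b) the idea is to apply identity \eqref{eq:bc_down} of Lemma \ref{lem:mult_default} to the pair $(I \setminus i, K \setminus i)$ in order to rewrite $v^{K \setminus i, j}_t(x^{K \setminus i})$ as $v^{I \setminus i, j}_t(x^{-i})$ for each $i \in K$. Applicability rests on the inclusion $\cal{I}^I v(t, x) \subset \cal{I}^{I \setminus i} v(t, x^{-i})$, i.e.\@ that deleting a live particle cannot revive a dead one; this is the one nontrivial input. I would establish it by another appeal to Lemma \ref{lem:mon_well} \ref{it:monotonicity}: for $k \in \cal{I}^I v(t, x)$, the inclusion $I \setminus \{i, k\} \subset I \setminus k$ yields $v^{I \setminus \{i, k\}, j}_t(x^{I \setminus \{i, k\}}) \geq v^{I \setminus k, j}_t(x^{-k})$, and summing against $D_{kj}$ upgrades the killing condition for $k$ within $I$ to the one within $I \setminus i$. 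Once this is in hand, for every $i \in K$ one has
\begin{equation*}
    \sum_{j = 1}^N D_{ij} v^{K \setminus i, j}_t(x^{K \setminus i}) = \sum_{j = 1}^N D_{ij} v^{I \setminus i, j}_t(x^{-i}) < x_i,
\end{equation*}
the strict inequality being precisely $i \notin \cal{I}^I v(t, x)$; this is $(t, x^K) \in \cal{D}^K_T v$. The edge case $I \setminus i = \varnothing$ is handled directly, since then both sums collapse to $\sum_{j = 1}^N D_{ij}$ and the same strict inequality applies.

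The main obstacle is the auxiliary inclusion $\cal{I}^I v(t, x) \subset \cal{I}^{I \setminus i} v(t, x^{-i})$ needed in (b); once this propagation of killed-indices to reduced systems is in place, the rest is a careful unfolding of the definitions using Lemmas \ref{lem:mon_well} and \ref{lem:mult_default}.
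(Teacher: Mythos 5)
Your proof is correct. Part (a) is essentially identical to the paper's argument for the inclusion $J_2 \subset I \setminus \cal{I}^I v(t, x)$ (same appeal to Lemma \ref{lem:mon_well}~\ref{it:monotonicity}, same contrapositive). Where you diverge is in part (b). The paper proves $(t, x^K) \in \cal{D}^K_T v$ by contradiction: assuming some $i_0 \in \cal{I}^K v(t, x^K)$, it applies the ``if'' direction of Lemma \ref{lem:mult_default} to get $x_i \leq \sum_j D_{ij} v^{K, j}_t(x^K)$ for $i \in \cal{I}^I v(t, x)$, upgrades this via Lemma \ref{lem:mon_well}~\ref{it:monotonicity} to the subsystem $K \setminus i_0$, and then applies the ``only if'' direction of Lemma \ref{lem:mult_default} to conclude $i_0 \in \cal{I}^I v(t, x)$, a contradiction. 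You instead argue directly: you first establish the auxiliary inclusion $\cal{I}^I v(t, x) \subset \cal{I}^{I \setminus i} v(t, x^{-i})$ (``removing an alive particle cannot revive a dead one'') via Lemma \ref{lem:mon_well}~\ref{it:monotonicity}, then invoke only the identity \eqref{eq:bc_down} from Lemma \ref{lem:mult_default} (applied to the pair $(I \setminus i, K \setminus i)$) to rewrite the boundary expression $\sum_j D_{ij} v^{K \setminus i, j}_t(x^{K \setminus i})$ as $\sum_j D_{ij} v^{I \setminus i, j}_t(x^{-i})$, and read off the strict inequality from $i \notin \cal{I}^I v(t, x)$. The ingredients are the same, but your route is constructive rather than by contradiction and isolates the auxiliary persistence-of-deaths inclusion as a clean intermediate fact, at the cost of having to verify the applicability hypothesis of \eqref{eq:bc_down}. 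Both approaches are valid; your edge-case handling of $I \setminus i = \varnothing$ and the vacuous case $K = \varnothing$ are also fine.
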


\begin{proof}
Set $J_1 = I \setminus \cal{I}^Iv(t, x)$ and let $J_2$ denote the union in the statement of the lemma. We shall show $J_1 \subset J_2$ and $J_2 \subset J_1$ separately. Let us begin with the former. Clearly, it is enough to prove that $(t, x^{J_1}) \in \cal{D}^{J_1}_Tv$. Suppose this is not the case and let $i_0 \in \cal{I}^{J_1}v(t, x^{J_1})$, so that
\begin{equation*}
    x_{i_0} \leq \sum_{j = 1}^N D_{i_0j} v^{J_1 \setminus i_0, j}_t(x^{J_1 \setminus i_0}).
\end{equation*}
We furthermore have by Lemma \ref{lem:mon_well} \ref{it:monotonicity} that
\begin{equation*}
    x_i \leq \sum_{j = 1}^N D_{ij} v^{J_1, j}_t(x^{J_1}) \leq \sum_{j = 1}^N D_{ij} v^{J_1 \setminus i_0, j}_t(x^{J_1 \setminus i_0})
\end{equation*}
for all $i \in \cal{I}^Iv(t, x)$, where we applied the if direction of Lemma \ref{lem:mult_default} in the first inequality. But this means $x_i \leq \sum_{j = 1}^N D_{ij} v^{J_1 \setminus i_0, j}_t(x^{J_1 \setminus i_0})$ for all $i \in I \setminus (J_1 \setminus i_0) = \cal{I}^Iv(t, x) \cup \{i_0\}$. Thus, employing the only if statement of Lemma \ref{lem:mult_default}, we find that $\cal{I}^Iv(t, x) \cup \{i_0\} = I \setminus (J_1 \setminus i_0) \subset \cal{I}^Iv(t, x)$. But $i_0 \in J_1$ and $J_1 \cap \cal{I}^Iv(t, x) = \varnothing$, so the above inclusion is a contradiction. Thus, $(t, x^{J_1}) \in \cal{D}^{J_1}_Tv$ as required.

To prove $J_2 \subset J_1$, it suffices to show that for all $J \subset I$ with $(t, x^J) \in \cal{D}^J_Tv$, we have $J \subset J_1$ or, equivalently, $J \cap \cal{I}^Iv(t, x) = \varnothing$. Suppose the latter is not the case and let $i$ lie in the intersection of $J$ and $\cal{I}^Iv(t, x)$. Then by Lemma \ref{lem:mon_well} \ref{it:monotonicity}, we have
\begin{equation*}
    x_i \leq \sum_{j = 1}^N D_{ij} v^{I \setminus i, j}_t(x^{-i}) \leq \sum_{j = 1}^N D_{ij} v^{J \setminus i, j}_t(x^{J \setminus i}),
\end{equation*}
which yields $i \in \cal{I}^Jv(t, x^J)$. This, however, contradicts $(t, x^J) \subset \cal{D}^J_Tv$.
\end{proof}

\subsubsection{Terminal Value}

The terminal value of a solution to PDE \eqref{eq:pde_moving_boundary} can be explicitly computed from the boundary and terminal condition.

\begin{lemma} \label{lem:terminal_condition}
Assume that $v$ is nonincreasing. Then, for all $I \in \P_N^{n - 1}$, $i \in I$, and $x \in \R^I$, we have
\begin{equation*}
    v^{I, i}_T(x) = \bf{1}_{i \in \cal{I}^Iv(T, x)}.
\end{equation*}
\end{lemma}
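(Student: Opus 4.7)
The plan is to split the verification based on whether $i \in \cal{I}^Iv(T,x)$ and, in each case, reduce the computation of $v^{I,i}_T(x)$ to a direct invocation of Lemmas \ref{lem:mult_default} and \ref{lem:mult_default_down}. Pleasantly, no separate induction on $\lvert I\rvert$ is needed once these preparatory results are in hand, because they already carry the monotonicity-based bookkeeping that makes the boundary condition of PDE \eqref{eq:pde_moving_boundary} agree across different reductions.

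First, I would treat the case $i \in \cal{I}^Iv(T,x)$. Taking $J = I \setminus \{i\}$, we have $I \setminus J = \{i\} \subset \cal{I}^Iv(T,x)$, so identity \eqref{eq:bc_down} of Lemma \ref{lem:mult_default} yields
\begin{equation*}
    v^{I,i}_T(x) = v^{J,i}_T(x^J).
\end{equation*}
Since $i \notin J$, the convention $v^{J,i} \equiv 1$ for indices outside the surviving configuration, built into Definition \ref{def:classical_solution}, gives $v^{J,i}_T(x^J) = 1$, matching $\bf{1}_{i \in \cal{I}^Iv(T,x)} = 1$.

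Next, I would address the case $i \notin \cal{I}^Iv(T,x)$. Setting $K = I \setminus \cal{I}^Iv(T,x)$, we have $i \in K$, and Lemma \ref{lem:mult_default_down} shows that $(T, x^K) \in \cal{D}^K_T v$, i.e., after removing the killed particles the reduced configuration has all its particles alive at the reduced state. Since $I \setminus K = \cal{I}^Iv(T,x)$ is trivially contained in $\cal{I}^Iv(T,x)$, a second application of identity \eqref{eq:bc_down} gives $v^{I,i}_T(x) = v^{K,i}_T(x^K)$, and the terminal condition in PDE \eqref{eq:pde_moving_boundary} for the configuration $K$ at the interior point $(T, x^K) \in \cal{D}^K_T v$ forces $v^{K,i}_T(x^K) = 0$. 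The degenerate sub-case $\cal{I}^Iv(T,x) = \varnothing$ is already covered by this argument, since then $K = I$ and the terminal condition is applied directly to $v^{I,i}$.

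The main obstacle here is really not in this proof but in the preparatory results: it is Lemmas \ref{lem:mon_well}, \ref{lem:mult_default}, and \ref{lem:mult_default_down} that leverage the nonincreasing assumption to guarantee the boundary value is independent of which killed particle is chosen and to characterise the maximal living sub-configuration. The only bookkeeping point to verify is that the auxiliary sets $J = I \setminus \{i\}$ and $K$ above both lie in $\P_N^{n-1}$ so that $v^{J,i}$ and $v^{K,i}$ are within the scope of the hypothesis on $v$; this is immediate since $\lvert J\rvert, \lvert K\rvert \leq \lvert I\rvert - 1 \leq n - 2$ (or the statement is vacuous when $K = I$ and we use the terminal condition directly).
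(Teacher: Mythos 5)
Your proposal is correct and essentially reproduces the paper's proof. Both arguments rest on exactly the same two ingredients: identity \eqref{eq:bc_down} from Lemma \ref{lem:mult_default} to reduce $v^{I,i}_T(x)$ to a value $v^{J,i}_T(x^J)$ over a smaller configuration $J$ with $I\setminus J\subset \cal{I}^Iv(T,x)$, and Lemma \ref{lem:mult_default_down} to assert that the reduced point $(T,x^J)$ sits inside $\cal{D}^J_T v$ so the terminal condition applies. The only cosmetic difference is that in the case $i\in\cal{I}^Iv(T,x)$ you reduce via $J = I\setminus\{i\}$, whereas the paper reduces once for both cases via $J = I\setminus\cal{I}^Iv(T,x)$ and then reads off $v^{J,i}_T(x^J)=1$ for $i\notin J$; these are interchangeable. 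One small slip in your closing bookkeeping remark: the bound $\lvert K\rvert \leq \lvert I\rvert - 1 \leq n-2$ fails precisely in the degenerate sub-case $\cal{I}^Iv(T,x)=\varnothing$, where $K=I$ and $\lvert K\rvert\leq n-1$; but since $K\subset I\in\P_N^{n-1}$ always gives $K\in\P_N^{n-1}$, the conclusion you want holds anyway, as you note.
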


\begin{proof}
By Lemma \ref{lem:mult_default}, we have that $v^{I, i}_T(x) = v^{J, i}_T(x)$, where $J = I \setminus \cal{I}^Iv(t, x)$. Lemma \ref{lem:mult_default_down} implies that $(T, x^J) \in \cal{D}_T^Jv$, so that $v^{I, i}_T(x) = v^{J, i}_T(x) = 0$ for $i \in J = I \setminus \cal{I}^Iv(t, x)$ by the terminal condition of PDE \eqref{eq:pde_moving_boundary}. For $i \in \cal{I}^Iv(t, x)$, it trivially holds that $v^{I, i}_T(x) = v^{J, i}_T(x) = 1$. The desired expression for $v^{I, i}_T(x)$ follows.
\end{proof}

By a simple induction, we can show that a solution of PDE \eqref{eq:pde_moving_boundary} approaches the terminal value in a continuous manner.

\begin{lemma} \label{eq:cont_to_terminal}
Assume that $v$ is nonincreasing. Then, for all $I \in \P_N^{n - 1}$, $i \in I$, and $x \in \R^I$, we have
\begin{equation*}
    \lim_{t \nearrow T} v^{I, i}_t(x) = v^{I, i}_T(x)
\end{equation*}
\end{lemma}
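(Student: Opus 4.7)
\textit{Plan.} I would prove the limit by induction on the cardinality $m = \lvert I\rvert$, using Lemma \ref{lem:terminal_condition} throughout to identify $v^{I,i}_T(x) = \bf{1}_{\{i \in \cal{I}^Iv(T,x)\}}$. For the base case $m = 1$, say $I = \{k\}$, the only lower-level function involved is $v^{\varnothing,j} \equiv 1$, so the domain reduces to the time-independent half-space $\{x_k > \sum_j D_{kj}\}$. If $x_k > \sum_j D_{kj}$, Property \ref{it:terminal} of Definition \ref{def:classical_solution} applied with the compact set $K = \{x\}$ gives the claim at once; if $x_k \leq \sum_j D_{kj}$, the boundary condition forces $v^{I,k}_t(x) = v^{\varnothing,k}_t(0) = 1$ for every $t \in [0,T]$, and the claim is immediate.

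For the inductive step with $\lvert I\rvert = m \geq 2$, I would split on whether $(T,x) \in \cal{D}^I_T v$. Case A (inside) follows again from Property \ref{it:terminal} with $K = \{x\}$. Case B (outside) is the interesting one. The key observation here is that nonincreasingness of $v$ forces any index $i_0 \in \cal{I}^Iv(T,x)$ to remain in $\cal{I}^Iv(t,x)$ for every $t \leq T$: indeed,
\[
	x_{i_0} \leq \sum_{j=1}^N D_{i_0 j}\, v^{I \setminus i_0, j}_T(x^{-i_0}) \leq \sum_{j=1}^N D_{i_0 j}\, v^{I \setminus i_0, j}_t(x^{-i_0}).
\]
In particular, $(t,x) \notin \cal{D}^I v$ for every $t \in [0,T)$, so $v^{I,i}_t(x) = \cal{F}^I_i v(t,x)$ throughout $[0,T)$.

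Now let $i_\ast = \min \cal{I}^Iv(T,x)$. For each $i' < i_\ast$ we have $i' \notin \cal{I}^Iv(T,x)$, so the strict inequality $x_{i'} > \sum_j D_{i'j}\, v^{I \setminus i', j}_T(x^{-i'})$ holds. Applying the induction hypothesis to the functions $v^{I \setminus i', j}$ (at level $m-1$), this strict inequality persists for all $t$ sufficiently close to $T$, so no index below $i_\ast$ enters $\cal{I}^Iv(t,x)$. Consequently, for $t$ near $T$, the minimal index in $\cal{I}^Iv(t,x)$ is exactly $i_\ast$, giving $v^{I,i}_t(x) = v^{I \setminus i_\ast, i}_t(x^{-i_\ast})$. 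A final application of the induction hypothesis yields the limit $v^{I \setminus i_\ast, i}_T(x^{-i_\ast})$, which coincides with $v^{I,i}_T(x)$ by Lemma \ref{lem:mult_default} applied with $J = I \setminus i_\ast$ (since $\{i_\ast\} \subset \cal{I}^Iv(T,x)$).

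\textit{Main obstacle.} The delicate point is that the topology of $\cal{I}^Iv(t,x)$ could a priori change as $t \nearrow T$: equalities defining membership at $t=T$ might become strict for $t$ slightly smaller, potentially letting $(t,x)$ dip into the open domain $\cal{D}^I v$, where the boundary representation is no longer available and Property \ref{it:terminal} does not apply at $x$. The monotonicity argument above both rules out this possibility in Case B and pins the minimal active index to $i_\ast$, reducing the limit to a single lower-level function to which induction applies cleanly.
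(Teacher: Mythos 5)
Your proof is correct and follows the same overall strategy as the paper: induction on $\lvert I\rvert$, split on whether $(T,x)$ lies in $\cal{D}^I_Tv$, use Property \ref{it:terminal} (with $K=\{x\}$) for the interior case, and exploit nonincreasingness in the exterior case to see that any $i_0 \in \cal{I}^Iv(T,x)$ remains in $\cal{I}^Iv(t,x)$ for all $t \leq T$. The one place you diverge is in how you then resolve the exterior case. You pin down the \emph{minimal} index of $\cal{I}^Iv(t,x)$, arguing via a second application of the induction hypothesis that indices $i' < i_\ast$ with $i' \notin \cal{I}^Iv(T,x)$ cannot slip into $\cal{I}^Iv(t,x)$ for $t$ close to $T$. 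This works, but it is more involved than necessary: Lemma \ref{lem:mon_well}\ref{it:well_definedness} already says that $v^{I\setminus\ell,i}_t(x^{-\ell})$ is independent of the choice of $\ell \in \cal{I}^Iv(t,x)$, so the boundary condition $v^{I,i}_t(x) = \cal{F}^I_iv(t,x)$ immediately gives $v^{I,i}_t(x) = v^{I\setminus i_0,i}_t(x^{-i_0})$ for \emph{every} $t \in [0,T]$ and the fixed index $i_0 \in \cal{I}^Iv(T,x)$ you already produced, regardless of whether the minimal index changes. The paper takes exactly this shortcut, so the limit then reduces to a single application of the induction hypothesis to $v^{I\setminus i_0,i}$, and no stabilisation argument is needed. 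In short: the ``delicate point'' you flag is real, but Lemma \ref{lem:mon_well}\ref{it:well_definedness} neutralises it without any extra inductive work. Your final identification of the limit via Lemma \ref{lem:mult_default} is also more than is needed, since the boundary condition at $(T,x)$ already yields $v^{I,i}_T(x) = v^{I\setminus i_\ast,i}_T(x^{-i_\ast})$ directly.
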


\begin{proof}
We prove the lemma by induction on the cardinality $m \in \{0, \dots, n - 1\}$ of $I$. For $m = 0$, the result is trivial since $v^{I, i}_t(x) = 1$ for all $t \in [0, T] \times \R^I$. Next, suppose that the result holds for cardinalities $m - 1$ with $m  \in [n]$. If $(T, x) \in \cal{D}^I_Tv$, then the continuity statement follows from the boundary condition of PDE \eqref{eq:pde_moving_boundary}, so let $(T, x)$ be outside the domain $\cal{D}^I_Tv$ instead. Then, due to the assumed nonincreasingness of $v$, we have that $(t, x)$ lies outside of $\cal{D}^I_Tv$. Moreover, we can find $i_0 \in I$ which is a member of $\cal{I}^Iv(t, x)$ for all $t \in [0, T]$. By Lemma \ref{lem:mon_well} \ref{it:well_definedness}, this implies that $v^{I, i}_t(x) = v^{I \setminus i_0, i}_t(x)$ for $t \in [0, T]$, so from the induction hypothesis, we deduce that
\begin{equation*}
    \lim_{t \nearrow T} v^{I, i}_t(x) = \lim_{t \nearrow T} v^{I, i}_t(x) v^{I \setminus i_0, i}_t(x) = v^{I, i}_T(x) v^{I \setminus i_0, i}_t(x) = v^{I, i}_T(x).
\end{equation*}
\end{proof}

\subsubsection{Domain and Boundary}

We derive some elementary regularity properties of the domains $\cal{D}^I_Tv$ and $\cal{D}^Iv$ and their boundaries. We begin by showing that both domains are open.

\begin{lemma} \label{lem:domain}
The domain $\cal{D}^Iv$ is an open subset of $[0, T] \times \R^I$ for all nonempty $I \in \P_N^n$. If $v$ is nonincreasing, the same is true for $\cal{D}^I_Tv$.
\end{lemma}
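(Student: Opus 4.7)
The plan is to address the two assertions separately. The openness of $\cal{D}^I v$ is a direct consequence of Definition \ref{def:classical_solution}\ref{it:regularity}. Indeed, on $[0, T) \times \R^I$, which is open in $[0, T] \times \R^I$, each function $F_i(t, x) := x_i - \sum_{j = 1}^N D_{ij} v^{I \setminus i, j}_t(x^{-i})$ is continuous, using that $v^{I \setminus i, j}$ is continuous on $[0, T) \times \R^{I \setminus i}$ when $j \in I \setminus i$ and identically $1$ otherwise. Hence $\cal{D}^I v = \bigcap_{i \in I} F_i^{-1}((0, \infty))$ is a finite intersection of open sets and therefore open.

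For the openness of $\cal{D}^I_T v$ under nonincreasingness, I would induct on the cardinality $m = |I|$, with $m = 0$, $1$ trivial (half-spaces). In the inductive step the argument above handles every $(t_0, x_0) \in \cal{D}^I_T v$ with $t_0 < T$, so it suffices to exhibit a neighbourhood of a given $(T, x_0) \in \cal{D}^I_T v$ inside $\cal{D}^I_T v$. I propose to do this by showing that each $F_i$ is lower semicontinuous at $(T, x_0)$, so that $\cal{D}^I_T v = \bigcap_{i \in I} F_i^{-1}((0, \infty))$ remains open. Concretely, for a sequence $(t_k, x_k) \to (T, x_0)$ the task is to verify
\begin{equation*}
    \limsup_{k \to \infty} v^{I \setminus i, j}_{t_k}(x_k^{-i}) \leq v^{I \setminus i, j}_T(x_0^{-i}) \quad \text{for each } j \in [N].
\end{equation*}

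The key step is a sandwich argument. Fixing $\eta > 0$ and setting $z_\eta := x_0 - \eta \bf{1}$, for $k$ large enough we have $x_k^{-i} \geq z_\eta^{-i}$ componentwise, so nonincreasingness of $v$ yields $v^{I \setminus i, j}_{t_k}(x_k^{-i}) \leq v^{I \setminus i, j}_{t_k}(z_\eta^{-i})$, and Lemma \ref{eq:cont_to_terminal} lets me pass to the limit in $k$, giving $v^{I \setminus i, j}_T(z_\eta^{-i})$. It remains to check that this equals $v^{I \setminus i, j}_T(x_0^{-i})$ for all sufficiently small $\eta > 0$. I would introduce $J := (I \setminus i) \setminus \cal{I}^{I \setminus i} v(T, x_0^{-i})$, the particles of $I \setminus i$ that are alive at $(T, x_0^{-i})$. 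By Lemma \ref{lem:mult_default_down}, $(T, x_0^J) \in \cal{D}^J_T v$; since $|J| \leq m - 1$, the induction hypothesis guarantees that $\cal{D}^J_T v$ is open, hence $(T, z_\eta^J) \in \cal{D}^J_T v$ for $\eta$ small, so the terminal condition of PDE \eqref{eq:pde_moving_boundary} forces $v^{J, j}_T(z_\eta^J) = 0$ for $j \in J$. Nonincreasingness also ensures dead particles stay dead, i.e.\@ $(I \setminus i) \setminus J \subset \cal{I}^{I \setminus i} v(T, z_\eta^{-i})$ for every $\eta \geq 0$; Lemma \ref{lem:mult_default} then collapses $v^{I \setminus i, j}_T(z_\eta^{-i})$ to $v^{J, j}_T(z_\eta^J)$, and by the same identification at $x_0$ together with Lemma \ref{lem:terminal_condition} one concludes $v^{I \setminus i, j}_T(z_\eta^{-i}) = v^{I \setminus i, j}_T(x_0^{-i})$ for $\eta$ small.

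The main obstacle is precisely this last identification: the terminal value $v^{I \setminus i, j}_T$ is generally discontinuous at $x_0^{-i}$ (it is an indicator by Lemma \ref{lem:terminal_condition}), so plain continuity in $x$ is not available. The resolution is to descend to the lower-dimensional system $J$ via Lemma \ref{lem:mult_default}, where $v^{J, j}_T$ vanishes on the open set supplied by the induction. Lemma \ref{lem:mult_default_down} furnishes exactly the starting point for this descent, and without nonincreasingness neither the descent nor the openness of $\cal{D}^J_T v$ would be available.
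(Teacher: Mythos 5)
Your proof of the openness of $\cal{D}^Iv$ matches the paper's in substance. For $\cal{D}^I_Tv$, you take a genuinely different route from the paper, and your route is correct. The paper argues directly at $(T, x)$ as follows: if no $t < T$ existed with $(t, x) \in \cal{D}^I_Tv$, then picking a sequence $t_m \nearrow T$ with some $i \in \cal{I}^Iv(t_m, x)$ and invoking Lemma \ref{eq:cont_to_terminal} would give $x_i \leq \sum_{j} D_{ij} v^{I\setminus i, j}_T(x^{-i})$, contradicting $(T, x) \in \cal{D}^I_Tv$. So there is $t < T$ with $(t, x) \in \cal{D}^I_Tv$; openness of $\cal{D}^Iv$ furnishes a ball around $(t, x)$ inside $\cal{D}^I_Tv$, and nonincreasingness translates this ball upward in time to a ball around $(T, x)$. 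In contrast, you establish lower semicontinuity of each $F_i$ at $(T, x_0)$ directly, by comparing to $z_\eta = x_0 - \eta\bf{1}$, passing to the terminal limit via Lemma \ref{eq:cont_to_terminal}, and then showing $v^{I\setminus i, j}_T(z_\eta^{-i}) = v^{I\setminus i, j}_T(x_0^{-i})$ for small $\eta$ via descent to the subsystem $J$ of alive particles, which requires Lemmas \ref{lem:mult_default} and \ref{lem:mult_default_down} together with an inner induction on $\lvert I \rvert$ to get openness of $\cal{D}^J_Tv$ at the lower level. The trade-off: the paper's ``lift a ball in time'' argument is shorter and avoids the multi-default machinery entirely; your argument is heavier but isolates the mechanism behind the apparent discontinuity of the terminal data, namely that the terminal value is locally constant along the half-line $x_0 - \eta\bf{1}$ because dead particles stay dead and the alive subsystem is already in its open domain.
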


\begin{proof}
Let $t \in [0, T)$. Then $\cal{D}^I_Tv \cap ([0, t) \times \R^I)$ is open in $[0, T] \times \R^I$
as the preimage of the open set $(0, \infty)^I$ under the continuous map
\begin{equation*}
    (s, x) \mapsto \biggl(x_i - \sum_{j = 1}^N D_{ij} v^{I \setminus i, j}_s(x^{-i})\biggr)_{i \in I}. 
\end{equation*}
Since $\cal{D}^Iv$ is the union of $\cal{D}^I_Tv \cap ([0, t) \times \R^I)$ over $t \in [0, T)$ it is an open subset of $[0, T] \times \R^I$ as well. 

Now, to prove that $\cal{D}^I_Tv$ is open, it is enough to show that for each $(t, x) \in \cal{D}^I_Tv$ there exists $\epsilon > 0$ such that the ball of radius $\epsilon$ in $[0, T] \times \R^I$ centred at $(t, x)$ lies in $(t, x) \in \cal{D}^I_Tv$. For $(t, x) \in \cal{D}^I_Tv$ with $t \in [0, T)$, this is clear by openness of $\cal{D}^Iv$ in $[0, T] \times \R^I$. So let us suppose that $x \in \R^I$ with $(T, x) \in \cal{D}^I_Tv$. We claim that for $t \in [0, T)$ sufficiently close to $T$, it holds that $(t, x) \in \cal{D}^I_Tv$. Otherwise, we can find a sequence $(t_m)_{m \geq 1}$ of elements in $[0, T)$ and an index $i \in I$ such that $i \in \cal{I}^Iv(t_m, x)$. From this and Lemma \ref{eq:cont_to_terminal}, we deduce that
\begin{equation*}
    x_i \leq \lim_{m \to \infty} \sum_{j = 1}^N D_{ij} v^{I \setminus i, j}_{t_m}(x^{-i}) = \sum_{j = 1}^N D_{ij} v^{I \setminus i, j}_T(x^{-i}),
\end{equation*}
which would imply that $(T, x)$ lies outside of $\cal{D}^I_Tv$ in contradiction with out assumption on $x$. Hence, we can find $t \in [0, T)$ with $(t, x) \in \cal{D}^I_Tv$. Now, choose $\epsilon > 0$ small enough such that the ball of radius $\epsilon$ in $[0, T] \times \R^I$ around $(t, x)$ is included in $\cal{D}^I_Tv$. Then nonincreasingness of the solution implies that the ball with the same radius $\epsilon$ but centred at $(T, x)$ instead of $(t, x)$ must be a subset of $\cal{D}^I_Tv$ as well. 
\end{proof}

By applying a transformation to the boundary of $\cal{D}^I_T v$ and appealing to the assumed nonincreasingness of $v$, we can deduce that it is $1$-Lipschitz in space.

\begin{lemma} \label{lem:boundary}
Assume that $v$ is nonincreasing. Then, the boundary of the set $\{x \in \R^I \define (t, x) \in \cal{D}^I_Tv\}$ is $1$-Lipschitz for all $t \in [0, T]$ and all nonempty $I \in \P_N^n$. In particular, it has vanishing Lebesgue measure. 
\end{lemma}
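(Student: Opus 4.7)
The plan is to represent the boundary of $\{x \in \R^I \define (t, x) \in \cal{D}^I_T v\}$ as the graph of a $1$-Lipschitz function over an $(|I|-1)$-dimensional hyperplane, using the nonincreasingness of $v$ to produce the right monotonicity of the domain. Set $A_t = \{x \in \R^I \define (t, x) \in \cal{D}^I_T v\}$, let $\mathbf{e} = (1, \ldots, 1) \in \R^I$, and decompose $\R^I = H \oplus \R \mathbf{e}$ with $H = \{x \in \R^I \define \sum_{i \in I} x_i = 0\}$. The first and crucial step is to verify that $A_t$ is up-closed for the componentwise order: if $x \in A_t$ and $y \geq x$, then $y \in A_t$. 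Writing $g_i(\xi) = \sum_{j = 1}^N D_{ij} v^{I \setminus i, j}_t(\xi)$ for $\xi \in \R^{I \setminus i}$, the assumed monotonicity of $v$ together with $D_{ij} \geq 0$ makes $g_i$ nonincreasing in each coordinate, so $y^{-i} \geq x^{-i}$ forces $g_i(y^{-i}) \leq g_i(x^{-i}) < x_i \leq y_i$.

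Thanks to up-closedness, for each $y \in H$ the fibre $\{c \in \R \define y + c \mathbf{e} \in A_t\}$ is of the form $(\phi(y), \infty)$ for some $\phi(y) \in \R \cup \{\pm \infty\}$, and combining this with openness of $A_t$ (Lemma \ref{lem:domain}) identifies the boundary of $A_t$ with $\{y + \phi(y) \mathbf{e} \define y \in H,\, \phi(y) \in \R\}$. To prove $\phi$ is $1$-Lipschitz with respect to $\|\cdot\|_\infty$, fix $y, y' \in H$, set $e = y' - y$, and take any $c > \phi(y)$. Then
\[
y' + (c + \|e\|_\infty)\mathbf{e} = (y + c \mathbf{e}) + (e + \|e\|_\infty \mathbf{e}) \geq y + c \mathbf{e}
\]
componentwise, since $e_i + \|e\|_\infty \geq 0$ for every $i \in I$. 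Up-closedness places the left-hand side in $A_t$, so $\phi(y') \leq c + \|e\|_\infty$; letting $c \searrow \phi(y)$ and exchanging the roles of $y$ and $y'$ gives $|\phi(y) - \phi(y')| \leq \|y - y'\|_\infty \leq \|y - y'\|_2$.

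The boundary of $A_t$ is therefore the image of (a subset of) the $(|I|-1)$-dimensional affine hyperplane $H$ under the Lipschitz map $y \mapsto y + \phi(y)\mathbf{e}$, so its Hausdorff dimension is at most $|I| - 1$ and its $|I|$-dimensional Lebesgue measure vanishes. The main subtlety in a complete write-up is the clean identification of $\partial A_t$ with the graph of $\phi$, which requires combining up-closedness (to exclude boundary points off the graph) with the openness of $A_t$ (to ensure every point $y + \phi(y)\mathbf{e}$ with finite $\phi(y)$ is actually a boundary point); once this is settled, the Lipschitz bound and the measure-theoretic corollary follow from the elementary manipulations sketched above.
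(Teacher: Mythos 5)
Your argument is essentially the same as the paper's: both parametrise the boundary as the graph, over the hyperplane $H = E_I = \{y : \mathbf{1}^\top y = 0\}$, of a function $\phi = r$ defined as the infimum of $s$ with $y + s\mathbf{1}$ inside the domain, and both obtain $1$-Lipschitzness in $\|\cdot\|_\infty$ from the up-closedness of the domain that nonincreasingness of $v$ confers. The one thing to add is that $\phi(y)$ is always finite, not merely in $\R \cup \{\pm\infty\}$: since $0 \leq v^{I\setminus i, j} \leq 1$, each barrier $\sum_j D_{ij}\, v^{I\setminus i, j}_t(\cdot)$ lies in $[0, \sum_j D_{ij}]$, so the fibre over every $y \in H$ is nonempty and bounded below; without this, the graph identification and the Lipschitz bound would only hold on a subset of $H$, which suffices for the measure-zero conclusion but not for the blanket ``$1$-Lipschitz boundary'' claim.
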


\begin{proof}
Fix $t \in [0, T]$ and a nonempty $I \in \P_N^n$. Let $\bf{1}$ denote the vector in $\R^I$ whose entries are all equal to one. We will establish a $1$-Lipschitz continuous one-to-one relationship between the elements of the plane $E_I = \{y \in \R^I \define \bf{1}^{\top} y = 0\}$ and the boundary of $\{x \in \R^I \define (t, x) \in \cal{D}^I_Tv\}$. To that end, we define the function $r \define E_I \to \R^I$ by
\begin{equation*}
    r(y) = \inf\Bigl\{s \in \R \define (t, y + s\bf{1}) \in \cal{D}^I_Tv\Bigr\}
\end{equation*}
for $y \in E_I$. Note that since $v^{I \setminus j, k}$ is bounded for $j \in I$, $k \in [N]$, the set on the right-hand side above is nonempty and bounded from below. Hence, the infimum is a well-defined value in $\R$. Moreover, since $\cal{D}^I_Tv$ is open in $[0, T] \times \R^I$ by Lemma \ref{lem:domain}, the element $y + r(y) \bf{1}$ lies on the boundary of $\{x \in \R^I \define (t, x) \in \cal{D}^I_Tv\}$ for all $y \in E_I$. 

We will now show that $\{y + r(y) \define y \in E_I\}$ is in fact identical with the boundary of $\{x \in \R^I \define (t, x) \in \cal{D}^I_Tv\}$ and that $r$ and, therefore, also the map $E_I \ni y \mapsto y + r(y) \bf{1}$ are $1$-Lipschitz continuous with respect to the maximum norm $\lvert \cdot \rvert_{\infty}$ on $\R^I$. Since we already argued that $y + r(y) \bf{1}$ lies on the boundary of $\{x \in \R^I \define (t, x) \in \cal{D}^I_Tv\}$, we only need to prove that the latter is included in $\{y + r(y) \define y \in E_I\}$. So let $z$ be on the boundary of $\{x \in \R^I \define (t, x) \in \cal{D}^I_Tv\}$. Then we can find $y \in E_I$ and $s \in \R$ such that $z = y + s\bf{1}$. Thus, we only need to show that $s = r(y)$. If $s$ were larger than $r(y)$, then it follows from the definition of $r(y)$ and the nonincreasingness of the solution that $z = y + s\bf{1}$ lies inside $\cal{D}^I_Tv$. But this is in contradiction with the fact that $z$ is on the boundary of $\{x \in \R^I \define (t, x) \in \cal{D}^I_Tv\}$ and that $\cal{D}^I_Tv$ is open. Hence, we know that $s \leq r(y)$. Suppose now that $s < r(y)$. Then from an analogous argument, it would follow that $y + r(y)$ is inside $\cal{D}^I_Tv$, in contradiction with the definition of the function $r$. Thus, we conclude that $s = r(y)$, as desired.

Finally, let us establish the $1$-Lipschitz continuity of $r$. Fix $y^1$, $y^2 \in E_I$ and set $r_i = r(y^i)$ for $i = 1$, $2$. We must show that $\lvert r_1 - r_2\rvert \leq \lvert y^1 - y^2\rvert_{\infty}$. Suppose this is not the case and that, say, $r_2 > r_1 + \lvert y^1 - y^2\rvert_{\infty}$. Then $y^2 + r_2 \bf{1} > y^1 + r_1 \bf{1}$, since
\begin{equation*}
    y^2_i + r_2 > y^2_i + \lvert y^1 - y^2\rvert_{\infty} + r_1 \geq y^2_i + y^1_i - y^2_i + r_1 = y^1_i + r_1.
\end{equation*}
Now, let us take $s > r_1$ such that $y^2 + r_2\bf{1} \geq y^1 + s \bf{1}$. By definition of $r_1 = r(y^1)$, we have that $y^1 + s \bf{1}$ lies inside $\cal{D}^I_Tv$. But then we may once more appeal to the nonincreasingness of the solution to obtain that
\begin{equation*}
    y^2_i + r_2 \geq y^1_i + s > \sum_{j = 1}^N D_{ij} v^{I\setminus i, j}_t\bigl((y^1 + s\bf{1})^{-i}\bigr) \geq \sum_{j = 1}^N D_{ij} v^{I\setminus i, j}_t\bigl((y^2 + r_2\bf{1})^{-i}\bigr)
\end{equation*}
for all $i \in I$. Consequently, $y^2 + r_2$ is an element of $\cal{D}^I_Tv$, contradicting the definition of $r_2 = r(y^2)$. This concludes the proof.
\end{proof}

\subsection{Proof of Theorem \ref{thm:pde_exist_unique}}

We now have all the prerequisites to prove Theorem \ref{thm:pde_exist_unique}. By using the Feynman--Kac formula for parabolic PDEs in general domains, we can construct a relatively explicit candidate solution (see Equation \eqref{eq:sol_representation}). As indicated earlier, the major challenge is to verify that this candidate is continuous. This challenge originates from the difficulty in establishing the time-regularity of the boundary, which is given by the solution from the previous level. Specifically, the issue that could occur is that at some boundary point $(t, x) \in ([0, T) \times \R^I) \setminus \cal{D}^Iv$, the boundary moves away so rapidly as time unfolds that regardless of how close to the boundary point $x$ we start a Brownian motion, its first hitting time on the boundary will be strictly bounded away from $t$. As a consequence, the solution would not be continuous at $(t, x)$. We circumvent this issue by an application of Lemma \ref{lem:mult_default} above, avoiding the need for technical regularity results for the boundary. Uniqueness for PDE \eqref{eq:pde_moving_boundary} follows from routine arguments.

\begin{proof}[Proof of Theorem \ref{thm:pde_exist_unique}]
\textit{Existence:} We construct a solution $v = (v^{I, i})_{i \in [N], I \in \P_N}$ iteratively. For $\varnothing \in \P_N$ there is nothing to do. Now, assume that $v^{I, i}$ is constructed for all $i \in [N]$ and $I \in \P_N^{n - 1}$ and some $n \in \{0, \dots, N - 1\}$, with the additional properties that $v^{I, i}$ is nonincreasing and $0 \leq v^{I, i} \leq 1$. Set
\begin{equation*}
    v^{(n - 1)} = (v^{I, i})_{i \in [N], I \in \P_N^{n - 1}}
\end{equation*}
and fix $I \in \P_N$ with $\lvert I \rvert = n$. For $i \in I \setminus [N]$, we simply set $v^{I, i}_t(x) = 1$ for all $(t, x) \in [0, T] \times \R^I$. For $i \in I$, we define a candidate solution $v^{I, i}$ as follows: for $(t, x) \in [0, T] \times \R^I$, we let $X^{t, x}_s = x + \sigma (\bf{W}^I_s - \bf{W}^I_t)$ for $s \in [t, T]$ and $\tau_{t, x} = \inf\{s \in [t, T] \define (s, X^{t, x}_s) \notin \cal{D}^I_Tv^{(n - 1)}\}$, where $\inf \varnothing = \infty$. Then, we define $v^{I, i} \define [0, T] \times \R^I \to \R$ by
\begin{equation} \label{eq:sol_representation}
    v^{I, i}_t(x) = \ev\Bigl[\F^I_iv^{(n - 1)}\bigl(\tau_{t, x}, X^{t, x}_{\tau_{t, x}}\bigr) \bf{1}_{\{\tau_{t, x} \leq T\}}\Bigr]
\end{equation}
for $t \in [0, T]$. By Lemma \ref{lem:domain}, the domain $\cal{D}^Iv$ is open and, by construction, $v^{I, i} \in C^{1, 2}(\cal{D}^Iv^{(n - 1)})$ and satisfies the heat equation inside $\cal{D}^Iv^{(n - 1)}$. Next, let $K$ be a compact subset of $\R^I$ with $\{T\} \times K \subset \cal{D}^I_Tv^{(n - 1)}v$. Then $\sup_{x \in K} \pr(\tau_{t, x} \leq T)$ tends to zero as $t \to T$, whence
\begin{equation*}
    \sup_{x \in K} \lvert v^{I, i}_t(x) - v^{I, i}_T(x)\rvert \leq \sup_{x \in K} \pr(\tau_{t, x} \leq T) \to 0
\end{equation*}
as $t \to T$. Thus, we have established the first two items of Property \ref{it:regularity} as well as Properties \ref{it:pde} and \ref{it:terminal}. Since $0 \leq v^{J, j} \leq 1$ and $v^{J, j}$ is nonincreasing for all $j \in [N]$ and $J \in \P_N^{n - 1}$, the same holds true for $v^{I, i}$ by \eqref{eq:sol_representation}. Consequently, it only remains to show that $v^{I, i} \in C_b([0, T) \times \R^I)$. 

Since $v^{I, i} \in C^{1, 2}(\cal{D}^Iv^{(n - 1)})$, continuity of $v^{I, i}$ is clear inside $\cal{D}^Iv^{(n - 1)}$. Next, on $([0, T) \times \R^I) \setminus \cal{D}^Iv^{(n - 1)}$, continuity follows from the boundary condition and Lemma \ref{lem:boundary_cond_cont}. The latter is applicable since the partial solution $v^{(n - 1)}$, is nonincreasing. Hence, it only remains to show that for any convergent sequence $(t_m, x^m)_{m \geq 1}$ in $\cal{D}^Iv^{(n - 1)}$ tending to an element $(t, x) \in [0, T) \times \R^I$ on the boundary of $\cal{D}^Iv^{(n - 1)}$, we have $v^{I, i}_{t_m}(x^m) \to v^{I, i}_t(x)$ as $m \to \infty$. Define $X^{(m)} = X^{t_m , x^m}$, $X = X^{t, x}$, $\tau_m = \tau_{t_m, x^m}$, and $\tau = \tau_{t, x} = t$. Here $\tau_{t, x} = t$ follows from the fact that $(t, x)$ is on the boundary of $\cal{D}^Iv^{(n - 1)}$. We will show that $\tau_m$ converges to $\tau = t$ almost surely. First, we prove that $\limsup_{m \to \infty} \tau_m \leq t$. 

Let $J = I \setminus \cal{I}^Iv^{(n - 1)}(t, x)$, so that $(t, x^J) \in \cal{D}^J_Tv^{(n - 1)}$ by Lemma \ref{lem:mult_default_down}. Consequently, $v^{J, k}$ is $C^{1, 2}$ near $(t, x^J) = (t, X^J_t)$ for all $k \in J$. We will exploit this regularity to show that for any $\delta \in (0, T - t)$ small enough, there exists $s \in (t, t + \delta]$ such that
\begin{equation} \label{eq:mid_step}
    X^k_s < \sum_{j = 1}^N D_{kj} v^{J, j}_s(X^J_s)
\end{equation}
for any $k \in I \setminus J = \cal{I}^Iv^{(n - 1)}(t, x)$. From this, the convergence of $(t_m, X^{(m)}_s)$ to $(t, X_s)$, and the continuity of $(u, y) \mapsto v^{I \setminus k, j}_u(y^{-k})$ on $[0, T) \times \R^I$, we then deduce that for all $m \geq 1$ large enough, it holds that
\begin{equation*}
    X^{(m), k}_s \leq \sum_{j = 1}^N D_{kj} v^{J, j}_s\bigl((X^{(m)}_s)^J\bigr)
\end{equation*}
for all $k \in I \setminus J$. In view of the only if statement of Lemma \ref{lem:mult_default}, we deduce that $\cal{I}^Iv^{(n - 1)}(s, X^{(m)}_s) \supset I \setminus J$ is nonempty, so $(s, X^{(m)}_s)$ lies outside the domain $\cal{D}^I_T v^{(n - 1)}$. This in turn means that $\tau_m \leq s \leq t + \delta$. Letting $\delta \to 0$ implies that $\limsup_{m \to \infty} \tau_m \leq t$. Let us establish \eqref{eq:mid_step}. Note that
\begin{equation*}
    \liminf_{s \searrow t} \Bigl(\max_{j \in I} \frac{W^j_s - W^j_t}{s - t}\Bigr) = -\infty
\end{equation*}
almost surely, so, on some set of full measure that we shall fix from now on, for any $\delta \in (0, T - t)$, we can find $s \in (t, t + \delta]$ such that $\max_{j \in I}(W^j_s - W^j_t) \leq - \frac{s - t}{\delta}$. Now, let us fix a $\delta \in (0, T - t)$ and the corresponding $s \in (t, t + \delta]$, so that in particular $X^k_s < X^k_t = x_k$ for all $k \in I$. 
Then, we estimate for $k \in I \setminus J$,
\begin{align*}
    X^k_s &= X^k_s - X^k_t + X^k_t \\
    &\leq \max_{j \in I} (X^j_s - X^j_t) + \sum_{j = 1}^N D_{kj} v^{J, k}_t(x^J) \\
    &\leq \sigma \max_{j \in I} (W^j_s - W^j_t) + \sum_{j = 1}^N D_{kj} \bigl(v^{J, j}_s(X^J_s) + C(s - t)\bigr) \\
    &\leq \frac{-\sigma + C D \delta}{\delta} (s - t) + \sum_{j = 1}^N D_{kj} v^{J, j}_s(X^J_s),
\end{align*}
where $D = \max_{\ell \in [N]} \sum_{j = 1}^N D_{\ell j}$. Here we used in the first inequality that $k \in \cal{I}^Iv^{(n - 1)}(t, x)$, in the second inequality that $v^{J, j}$ is $C^{1, 2}$ near $(t, x^J)$, and in the last inequality that $y \mapsto v^{J, j}_s(y)$ is nonincreasing and $X_s \leq x$. But $\frac{-\sigma + CD \delta}{\delta}(s - t)$ is negative for $\delta \in (0, T - t)$ sufficiently small, in which case 
\begin{equation*}
    X^k_s < \sum_{j = 1}^N D_{kj} v^{J, j}_s(X^J_s)
\end{equation*}
for all $k \in I \setminus J$. Since $\delta \in (0, T - t)$ was arbitrary, this proves \eqref{eq:mid_step}. Thus, we have $\limsup_{m \to \infty} \tau_m \leq t$. 

Next, we establish that $\liminf_{m \to \infty} \tau_m \geq t$. Let us choose a subsequence $(m_{\ell})_{\ell \geq 1}$ that achieves the limit inferior, i.e.\@ for which $\lim_{\ell \to \infty} \tau_{m_{\ell}} = \liminf_{m \to \infty} \tau_m = \tau_0$. Note that from above, we know that
\begin{equation*}
    \tau_0 \leq \limsup_{m \to \infty} \tau_m \leq t.
\end{equation*}
Fix $i_{\ell} \in \cal{I}^Iv^{(n - 1)}(\tau_{m_{\ell}}, X^{(m_{\ell})}_{\tau_{m_{\ell}}})$. Descending to a further subsequence if necessary, we may assume that the sequence $(i_{\ell})_{\ell \geq 1}$ is equal to some fixed $i_0 \in I$. Then, appealing to the uniform convergence of $X^{(m)}$ to $X$ and the continuity of $(s, y) \mapsto v^{I \setminus k, j}_s(y^{-k})$ for $k \in I$, we find
\begin{equation*} 
   X^{i_0}_{\tau_0} = \lim_{\ell \to \infty} X^{(m_{\ell}), i_0}_{\tau_{m_{\ell}}} \leq \lim_{\ell \to \infty} \sum_{j = 1}^N D_{i_0j} v^{I \setminus i_0, j}_{\tau_{m_{\ell}}}\bigl(\bigl(X^{(m_{\ell})}_{\tau_{m_{\ell}}}\bigr)^{-i_0}\bigr) = \sum_{j = 1}^N D_{i_0j}  v^{I \setminus i_0, j}_{\tau_0}\bigl((X_{\tau_0})^{-i_0}\bigr), 
\end{equation*}
so that $t \leq \tau_0 = \liminf_{m \to \infty} \tau_m$. Together, we have that $\lim_{m \to \infty} \tau_m = t$. 
From this and the fact that $t < T$, we can also deduce that $\bf{1}_{\{\tau_m \leq T\}} \to 1$. Hence, drawing 
on the continuity of the boundary condition, guaranteed by Lemma \ref{lem:boundary_cond_cont}, and applying the dominated convergence theorem, we find that
\begin{align*}
    \lim_{m \to \infty} v^{I, i}_{t_m}(x^m) &= \lim_{m \to \infty} \ev\Bigl[\F^I_iv^{(n - 1)}\bigl(\tau_m, X^{(m)}_{\tau_m} \bigr) \bf{1}_{\{\tau_m \leq T\}}\Bigr] \\
    &= \F^I_iv^{(n - 1)}\bigl(\tau, X_{\tau}\bigr) \\
    &= v^{I, i}_t(x),
\end{align*}
so that $v^{I, i}$ is continuous on $[0, T) \times \R^I$. Thus, $(v^{I, i})_{i \in I, I \in \P_N^n}$ is a classical solution to PDE \eqref{eq:pde_moving_boundary} up to level $n$. This terminates our construction.

\textit{Uniqueness:} Let $\tilde{v} = (\tilde{v}^{I, i})_{i \in I, I \in \P_N}$ be any classical solution to PDE \eqref{eq:pde_moving_boundary}. Proceeding by induction on the cardinality of $I \in \P_N$, we will show that $\tilde{v}^{I, i} = v^{I, i}$ for all $i \in I$ and $I \in \P_N$, where $v = (v^{I, i})_{i \in I, I \in \P_N}$ is the solution to PDE \eqref{eq:pde_moving_boundary} constructed above. If $I = \varnothing$, there is nothing to show, since $\tilde{v}^{\varnothing, i}_t(0) = 1 = v^{\varnothing, i}_t(0)$ for $t \in [0, T]$ and $i \in [N]$ by definition. Next, suppose that $\tilde{v}^{I, i} = v^{I, i}$ for all $i \in [N]$ and $I \in \P^{n - 1}_N$ and some $n \in \{0, \dots, N - 1\}$. We shall show that the above equality also holds in the case that $\lvert I \rvert = n$. So fix $I \in \P^n_N$ and let $(t, x) \in [0, T] \times \R^I$. For $i \in [N] \setminus I$, we have $\tilde{v}^{I, i}(x) = 1 = v^{I, i}_t(x)$ for all $(t, x) \in [0, T] \times \R^I$, so let us focus on the case $i \in I$. Since $\tilde{v}$ and $v$ coincide up to level $n - 1$, the domain and boundary conditions for the PDEs satisfied by $\tilde{v}^{I, i}$ and $v^{I, i}$ are the same. In particular, if $(t, x) \notin \cal{D}^I_T\tilde{v} = \cal{D}^I_Tv$, then
\begin{equation*}
    \tilde{v}^{I, i}_t(x) = \F^I_i\tilde{v}(t, x) = \F^I_iv(t, x) = v^{I, i}_t(x).
\end{equation*}
Next, suppose that $(t, x) \in \cal{D}^I_T\tilde{v}$ and let $X^{t, x}$ and $\tau_{t, x}$ be as defined above. Note that since $(t, x)$ are inside the domain, we either have that $t = T$, in which case $\tilde{v}^{I, i}_t(x) = 0 = v^{I, i}_t(x)$, or $t \in [0, T)$ and $\tau_{t, x} > t$. In the latter case, it follows from It\^o's formula that for $s \in [t, \tau_{t, x} \land T)$, we have
\begin{align} \label{eq:ito_on_second_sol}
    \tilde{v}^{I, i}_s(X^{t, x}_s) &= \tilde{v}^{I, i}_t(x) + \int_t^s \Bigl(\partial_t \tilde{v}^{I, i}_u(X^{t, x}_u) + \frac{\sigma^2}{2}\Delta\tilde{v}^{I, i}_u(X^{t, x}_u)\Bigr) \, \d u \notag \\
    &\ \ \ + \sum_{j \in I} \int_0^t \sigma \partial_{x_j} \tilde{v}^{I, i}_u(X^{t, x}_u) \, \d W^j_u \notag \\
    &= \tilde{v}^{I, i}_t(x) + \sum_{j \in I} \int_0^t \sigma \partial_{x_j} \tilde{v}^{I, i}_u(X^{t, x}_u) \, \d W^j_u,
\end{align}
where we use in the second equality that $\tilde{v}^{I, i}$ solves the heat equation inside the domain $\cal{D}^I\tilde{v}$ and that $X^{t, x}_u$ lies inside the domain for $u \in [t, \tau_{t, x} \land T)$. 

Define $\tau_m$ to be the first time $s \in [t, T]$ such that $(s, X^{t, x}_s)$ is at distance at most $1/m$ from $([0, T] \times \R^I) \setminus \cal{D}^I_T\tilde{v}$. We claim that $\tau_m \to \tau_{t, x}$ almost surely on $\{\tau_{t, x} < \infty\}$. First note that $\tau_m \leq \tau_{t, x}$ and that the sequence $\tau_m$, $m \geq 1$, is nondecreasing, so that $\tau_m$ has an a.s.\@ limit $\tilde{\tau}$ with values in $[t, \tau_{t, x} \land T] \cup \{\infty\}$. Next, by definition of $\tau_m$, $(\tilde{\tau}, X^{t, x}_{\tilde{\tau}})$ lies outside of $\cal{D}^I_T\tilde{v}$ on $\{\tau_{t, x} < \infty\}$. Since $\tilde{\tau} \leq \tau_{t, x}$ and $\tau_{t, x}$ is the first time at which $(s, X^{t, x}_s)$ lies outside the domain, we can conclude that $\tilde{\tau} = \tau_{t, x}$. Now, we define $\varrho_m$ to be the first time $s \in [t, T]$ that $X^{t, x}_s$ leaves the ball of radius $m$ in $\R^I$ centred at $x$. Clearly, $\varrho_m \to \infty$ as $m \to \infty$. For a fixed $m \geq 1$, all realisations $X^{t, x}_s$ for $s \in [t, T_m]$, where $T_m = \tau_m \land \varrho_m \land (T - \frac{1}{m})$, lie in some compact subset of the domain $\cal{D}^I\tilde{v}$. Since $\tilde{v}^{I, i}$ is an element of $C^{1, 2}(\cal{D}^I\tilde{v})$, it follows that $\partial_{x_j} \tilde{v}^{I, i}_s(X^{t, x}_s)$ is essentially bounded over $s \in [t, T_m]$ and $\omega \in \Omega$. Consequently, evaluating \eqref{eq:ito_on_second_sol} at $s = T_m$ and taking expectation on both sides, we find that $\tilde{v}^{I, i}_t(x) = \ev[\tilde{v}^{I, i}_{T_m}(X^{t, x}_{T_m})]$. Then, taking $m \to \infty$ and using that $\tilde{v}^{I, i}$ is bounded and continuous on $[0, T) \times \R^I$ and that $\pr(\tau_{t, x} = T) = 0$ by Lemma \ref{lem:domain}, we deduce from the dominated convergence theorem that
\begin{align*}
    \tilde{v}^{I, i}_t(x) &= \lim_{m \to \infty}\ev\bigl[\tilde{v}^{I, i}_{T_m}\bigl(X^{t, x}_{T_m}\bigr)\bigr] \\
    &= \lim_{m \to \infty} \ev\Bigl[\tilde{v}^{I, i}_{T_m}\bigl(X^{t, x}_{T_m}\bigr) \bf{1}_{\{\tau_{t, x} \leq T\}}\Bigr] + \lim_{m \to \infty} \ev\Bigl[\tilde{v}^{I, i}_{T_m}\bigl(X^{t, x}_{T_m}\bigr) \bf{1}_{\{\tau_{t, x} > T\}}\Bigr] \\
    &= \ev\Bigl[\tilde{v}^{I, i}_{\tau_{t, x}}\bigl(X^{t, x}_{\tau_{t, x}}\bigr) \bf{1}_{\{\tau_{t, x} \leq T\}}\Bigr] + \ev\Bigl[\tilde{v}^{I, i}_T\bigl(X^{t, x}_T\bigr) \bf{1}_{\{\tau_{t, x} > T\}}\Bigr] \\
    &= \ev\Bigl[\F^I_iv\bigl(\tau_{t, x}, X^{t, x}_{\tau_{t, x}}\bigr) \bf{1}_{\{\tau_{t, x} \leq T\}}\Bigr].
\end{align*}
Here we used in the third equality that if $\tau_{t, x} > T$, then for $\pr$-a.e.\@ $\omega \in \Omega$, the sequence $(X^{t, x}_{T_m})_{m \geq M}$, for some $M = M(\omega)$ large enough, is contained in some compact set $K = K(\omega) \subset \R^I$ such that $\{T\} \times K \subset \cal{D}^I_T\tilde{v}$. Hence, it follows from Property \ref{it:terminal} of the definition of a classical solution that
\begin{equation*}
    \tilde{v}^{I, i}_{T_m}\bigl(X^{t, x}_{T_m}\bigr) \to \tilde{v}^{I, i}_T(X^{t, x}_T) = 0
\end{equation*}
on $\{\tau_{t, x} > T\}$. This concludes the induction and the proof.
\end{proof}

\section{Verification Theorem for PDE  \texorpdfstring{\eqref{eq:pde_moving_boundary}}{(EQ)}} \label{sec:verification}

In this section, we will explain how to obtain a solution to FBSDE \eqref{eq:fbsde_hetero} from the decoupling field $v = (v^{I, i})_{i \in [N], I \in \P_N}$ constructed in Theorem \ref{thm:pde_exist_unique} for any initial data $(\varrho, (\xi_i)_{i \in [N]}, \chi)$. This solution turns out to satisfy a natural flow property, discussed in Proposition \ref{prop:fp}. Fix initial data $(\varrho, (\xi_i)_{i \in [N]}, \chi)$ and let $X^i = (X^i_t)_{t \in [\varrho, T]}$ be given by
\begin{equation} \label{eq:forward_part}
    X^i_t = \xi_i + \sigma (W^i_t - W^i_{\varrho}) 
\end{equation}
for $t \in [\varrho, T]$. We define the following sequences of hitting times and random index sets: set $\varrho_0 = \varrho$ and $\cal{I}_0 = [N]$. Next, assume $\varrho_n$ and $\cal{I}_n$ are given for some $n \in \{0, \dots, N - 1\}$. If $n + 1 \leq N - \lvert \chi\rvert$, set $\varrho_{n + 1} = \varrho$ and $\cal{I}_{n + 1} = \cal{I}_n \setminus i_{n + 1}$, where $i_{n + 1}$ is the smallest index in $\cal{I}_n \setminus \chi$. Otherwise, define
\begin{equation} \label{eq:killing_times_pde_solution}
    \varrho_{n + 1} = \inf\biggl\{t \in [\varrho_n, T] \define X^i_t \leq \sum_{j = 1}^N D_{ij} v^{\cal{I}_n \setminus i, j}_t\bigl(\bf{X}^{\cal{I}_n \setminus i}_t\bigr) \text{ for some } i \in \cal{I}_n\biggr\},
\end{equation}
where $\bf{X} = (X^1, \dots, X^N)$. Then, if $\varrho_n \leq T$, we set $\cal{I}_{n + 1} = \cal{I}_n \setminus i_{n + 1}$, where $i_{n + 1}$ is the smallest index $i$ in $\cal{I}_n$ with
\begin{equation*}
    X^i_{\varrho_{n + 1}} \leq \sum_{j = 1}^N D_{ij} v^{\cal{I}_n \setminus i, j}_{\varrho_{n + 1}}\bigl(\bf{X}^{\cal{I}_n \setminus i}_{\varrho_{n + 1}}\bigr).
\end{equation*}
Otherwise, we set $\cal{I}_{n + 1} = \cal{I}_n$. This completes the construction. 

The above procedure can be summarised as follows: we begin with a complete system $[N]$ of alive particles. Then, we first successively remove particles from $[N]$ until only those in the set $\chi$ are left. Next, we run the target system, iteratively eliminating particles when they hit the killing threshold. Once the final time $T$ is reached, we set the remaining hitting times to infinity and the set of alive particles becomes stationary. This process ensures that $\lvert \cal{I}_n\rvert = N - n$ if $\varrho_n < \infty$.

Now, let us define the random time-varying index set $\bf{I} = (\bf{I}_t)_{t \in [\varrho, T]}$ of living particles by $\bf{I}_t = \cal{I}_n$ if $t \in [\varrho_n, \varrho_{n + 1})$ for some $n \in \{0, \dots, N - 1\}$ and $\bf{I}_t = \varnothing$ if $t \in [\varrho_N, T]$. With this, we have
\begin{equation} \label{eq:in_domain}
    \bigl(t, \bf{X}^{\bf{I}_t}_t\bigr) \in \cal{D}^{\bf{I}_t}_T
\end{equation}
for all $t \in [\varrho, T]$. Then, we define the processes $Y^i = (Y^i_t)_{t \in [\varrho, T]}$ and $Z^i = (Z^i_t)_{t \in [\varrho, T]}$, $i \in [N]$ by
\begin{equation} \label{eq:y_and_z_decoupling}
    Y^i_t = v^{\bf{I}_t, i}_t\bigl(\bf{X}^{\bf{I}_t}_t\bigr) \quad \text{and} \quad Z^{ij}_t = - \sigma \partial_{x_j} v^{\bf{I}_t, i}_t\bigl(\bf{X}^{\bf{I}_t}_t\bigr)
\end{equation}
for $t \in [0, T]$ and $j \in [N]$. Note that $Z^{ij}_t$ vanishes if $j \notin \bf{I}_t$, since then $v^{\bf{I}_t, i}$ does not depend on the $j$th coordinate. We also introduce the stopping time $\tau_i$ given by $\tau_i = \varrho$ on $\{i \notin \chi\}$ and
\begin{equation} \label{eq:particle_killing_pde_sol}
    \tau_i = \inf\biggl\{t \in [\varrho, T] \define X^i_t \leq \sum_{j = 1}^N D_{ij} Y^j_t\biggr\}
\end{equation}
on $\{i \in \chi\}$. The definition of $\tau_i$ implies that $\tau_i = \varrho$ if $i \notin \chi$, $\tau_i = \varrho_n$ if $\varrho_n < \infty$ and $i = i_n$, and $\tau_i = \infty$ otherwise. Whenever we want to emphasise the dependence of $X^i$, $Y^i$, $Z^i$, and $\bf{I}$ on the initial data $D = (\varrho, (\xi_i)_{i \in [N]}, \chi)$, we write $X^{D, i}$, $Y^{D, i}$, $Z^{D, i}$, and $\bf{I}^D$ instead of $X^i$, $Y^i$, $Z^i$, and $\bf{I}$.

\begin{theorem} \label{thm:verification}
Let $v = (v^{I, i})_{i \in [N], I \in \P_N}$ be the classical solution of the system of PDEs \eqref{eq:pde_moving_boundary} and for initial data $(\varrho, (\xi_i)_{i \in [N]}, \chi)$, define $X^i$, $Y^i$ and $Z^i$, $i \in [N]$, as in \eqref{eq:forward_part} and \eqref{eq:y_and_z_decoupling}, respectively. Then $(X^i, Y^i, Z^i)_{i \in [N]}$ is a solution to FBSDE \eqref{eq:fbsde_hetero}. 
\end{theorem}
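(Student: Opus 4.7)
The forward equation in FBSDE \eqref{eq:fbsde_hetero} holds by construction via \eqref{eq:forward_part}, so the task reduces to verifying the backward representation $Y^i_t = \bf{1}_{\{\tau_i \leq T\}} - \int_t^T Z^i_s \cdot \d\bf{W}_s$ and checking that the hitting time $\tau_i$ defined in \eqref{eq:particle_killing_pde_sol} coincides with the one prescribed by the FBSDE, namely the first time that $X^i_t$ drops to $\sum_j D_{ij} Y^j_t$ (when $i \in \chi$). My plan is to apply It\^o's formula on each random interval $[\varrho_n, \varrho_{n+1} \wedge T)$ and glue the pieces together using continuity of $Y^i$ at the configuration-change times $\varrho_n$.

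On $[\varrho_n, \varrho_{n+1} \wedge T)$ the index set satisfies $\bf{I}_t \equiv \cal{I}_n$ and, by \eqref{eq:in_domain}, the pair $(t, \bf{X}^{\cal{I}_n}_t)$ stays in the open set $\cal{D}^{\cal{I}_n} v$, where $v^{\cal{I}_n, i}$ is $C^{1,2}$ and satisfies the heat equation. A standard localisation at exits of $(t, \bf{X}^{\cal{I}_n}_t)$ from compact subsets of $\cal{D}^{\cal{I}_n} v$ lets me apply It\^o's formula, and the drift cancels against the PDE. This yields the stochastic integral representation of $Y^i$ on the interval with integrand $\sigma \partial_{x_j} v^{\cal{I}_n, i}_s(\bf{X}^{\cal{I}_n}_s)$, which is precisely $Z^{ij}_s$ up to sign (and with $Z^{ij}_s$ vanishing for $j \notin \cal{I}_n$, consistent with $v^{\cal{I}_n, i}$ not depending on killed coordinates). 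At each finite transition $\varrho_{n+1}$, the construction places $i_{n+1}$ in $\cal{I}^{\cal{I}_n} v(\varrho_{n+1}, \bf{X}^{\cal{I}_n}_{\varrho_{n+1}})$, so Lemma \ref{lem:mon_well}\ref{it:well_definedness} gives
\begin{equation*}
v^{\cal{I}_n, i}_{\varrho_{n+1}}\bigl(\bf{X}^{\cal{I}_n}_{\varrho_{n+1}}\bigr) = v^{\cal{I}_n \setminus i_{n+1}, i}_{\varrho_{n+1}}\bigl(\bf{X}^{\cal{I}_n \setminus i_{n+1}}_{\varrho_{n+1}}\bigr) = v^{\cal{I}_{n+1}, i}_{\varrho_{n+1}}\bigl(\bf{X}^{\cal{I}_{n+1}}_{\varrho_{n+1}}\bigr)
\end{equation*}
for $i \in \cal{I}_{n+1}$, while for $i = i_{n+1}$ both limits equal $1$ by the convention $v^{J, i} \equiv 1$ for $i \notin J$. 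Hence $Y^i$ is continuous at $\varrho_{n+1}$ and the piecewise representations glue into a single identity over $[\varrho, T]$. At the terminal time, Lemma \ref{lem:terminal_condition} combined with the convention on killed particles yields $Y^i_T = \bf{1}_{\{\tau_i \leq T\}}$.

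It remains to match $\tau_i$ with the FBSDE hitting time. For $t \in [\varrho, \tau_i)$, so that $i \in \bf{I}_t$, Lemma \ref{lem:mon_well}\ref{it:monotonicity} and \eqref{eq:in_domain} give
\begin{equation*}
X^i_t > \sum_{j=1}^N D_{ij} v^{\bf{I}_t \setminus i, j}_t\bigl(\bf{X}^{\bf{I}_t \setminus i}_t\bigr) \geq \sum_{j=1}^N D_{ij} v^{\bf{I}_t, j}_t\bigl(\bf{X}^{\bf{I}_t}_t\bigr) = \sum_{j=1}^N D_{ij} Y^j_t,
\end{equation*}
so no FBSDE-hit occurs before $\tau_i$. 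Conversely, at $t = \tau_i = \varrho_n$ with $i = i_n$, we have $\bf{I}_{\varrho_n} = \cal{I}_{n-1} \setminus i$, so continuity together with \eqref{eq:killing_times_pde_solution} yields $X^i_{\varrho_n} \leq \sum_j D_{ij} v^{\cal{I}_{n-1} \setminus i, j}_{\varrho_n}(\bf{X}^{\cal{I}_{n-1} \setminus i}_{\varrho_n}) = \sum_j D_{ij} Y^j_{\varrho_n}$, so the FBSDE hit also occurs at $\tau_i$.

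The main technical subtlety is the continuity of $Y^i$ at the transitions $\varrho_n$: a priori, passing from configuration $\cal{I}_{n-1}$ to $\cal{I}_n$ switches both the decoupling field and its number of spatial coordinates, and without the cascade consistency provided by Lemma \ref{lem:mon_well}\ref{it:well_definedness}, the two representations of $Y^i$ could disagree at $\varrho_n$. The careful choice of domain in \eqref{eq:domain}, where the boundary data at configuration $\cal{I}$ are drawn from lower-level solutions $v^{\cal{I} \setminus j, \cdot}$ rather than $v^{\cal{I}, \cdot}$ itself, is precisely what makes the gluing automatic.
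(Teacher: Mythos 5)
Your decomposition and gluing strategy is essentially the paper's own: apply It\^o's formula piecewise on $[\varrho_n, \varrho_{n+1} \wedge T)$, where the heat equation eliminates the drift, and use Lemma \ref{lem:mon_well}\ref{it:well_definedness} (equivalently, the boundary condition in PDE \eqref{eq:pde_moving_boundary}) to match the two representations of $Y^i$ at finite transitions $\varrho_{n+1}$. Your explicit argument that the hitting time $\tau_i$ from \eqref{eq:particle_killing_pde_sol} coincides with the FBSDE hitting time, via Lemma \ref{lem:mon_well}\ref{it:monotonicity} and the domain inclusion \eqref{eq:in_domain}, spells out a step that the paper states without detail, which is a welcome addition.

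There is, however, a gap at the terminal time. A classical solution is only required to lie in $C_b([0, T) \times \R^I)$, so when $T$ falls strictly inside an interval $[\varrho_n, \varrho_{n+1})$ you cannot deduce $\lim_{t \nearrow T} Y^i_t = Y^i_T$ from interior continuity of $v^{\cal{I}_n, i}$ alone; this requires Property \ref{it:terminal} of Definition \ref{def:classical_solution}, which applies because $(T, \bf{X}^{\cal{I}_n}_T) \in \cal{D}^{\cal{I}_n}_T v$ by \eqref{eq:in_domain}. You must also rule out a transition occurring exactly at $T$: the paper does so via \eqref{eq:no_killing_at_terminal}, using that on $\{\varrho < T\}$ the law of $X^i_T$ has a density while the terminal boundary values take finitely many values. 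Without these two observations the a.s.\@ continuity of $Y^i$ at $T$, and hence $Y^i \in \bb{S}^2_{\varrho, T}$ and the gluing of the It\^o identity onto the last interval, remain unverified. A more minor omission: your localisation by exit from compact subsets of $\cal{D}^{\cal{I}_n} v$ only yields the It\^o identity at the local-martingale level; to conclude $Z^i \in \bb{H}^{2, N}_{\varrho, T}$ and that the stochastic integral is a true martingale one should, as the paper does through the $\epsilon$-buffered stopping times $\varrho_{n+1, \epsilon}$ and It\^o's isometry, transfer the uniform boundedness of $Y^i$ to an $L^2$-bound on the integrand.
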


\begin{proof}
We have to verify the following four statements for all $i \in [N]$: $Y^i \in \bb{S}^2_{\varrho, T}$, $Z^i \in \bb{H}^{2, N}_{\varrho, T}$,
\begin{equation*}
    Y^i_t = Y^i_{\varrho} + \int_{\varrho}^t Z^i_s \cdot \d \bf{W}_s
\end{equation*}
for $t \in [\varrho, T]$, and $Y^i_T = \bf{1}_{\{\tau_i \leq T\}}$. Let us begin with the first one. Since $v$ is bounded by Theorem \ref{thm:pde_exist_unique}, to obtain $Y^i \in \bb{S}^2_{\varrho, T}$ it is enough to prove that $Y^i$ is a.s.\@ continuous on $[\varrho, T]$. This is clear for $t \in [\varrho_n, \varrho_{n + 1}) \cap [\varrho, T)$, $n \in \{0, \dots, N - 1\}$, from the continuity of $v^{I, i}$ on $[0, T) \times \R^I$. Hence, we have to show left-continuity at $\varrho_n$ for $n = 1$,~\ldots, $N$ on $\{\varrho < \varrho_n < T\}$ and left-continuity at $T$. For the former, we simply note that on $\{\varrho < \varrho_n < T\}$, we have
\begin{align*}
    \lim_{t \nearrow \varrho_n} Y^i_t = \lim_{t \nearrow \varrho_n} v^{\cal{I}_{n - 1}, i}_t\bigl(\bf{X}^{\cal{I}_{n - 1}}_t\bigr) = v^{\cal{I}_{n - 1}, i}_{\varrho_n}\bigl(\bf{X}^{\cal{I}_{n - 1}}_{\varrho_n}\bigr) = v^{\cal{I}_n, i}_{\varrho_n}\bigl(\bf{X}^{\cal{I}_n}_{\varrho_n}\bigr) = Y^i_{\varrho_n},
\end{align*}
where we used in the second equality that $v^{I, i}$ is continuous on $[0, T) \times \R^I$ and in the third that $(\varrho_n, \bf{X}^{\cal{I}_{n - 1}}_{\varrho_n}) \in ([0, T) \times \R^{\cal{I}_{n - 1}}) \setminus \cal{D}^{\cal{I}_{n - 1}}v$, so that the boundary condition of PDE \eqref{eq:pde_moving_boundary} applies. To address the left-continuity at $T$, we first note that if $\varrho_N < T$, then $Y^i$ is constant on $[\varrho_N, T]$, so in particular continuous. Hence, we may assume that $\varrho_N \geq T$. Next, we observe that
\begin{equation} \label{eq:no_killing_at_terminal}
    \pr(\varrho_n = T,\, \varrho < T) = \pr\biggl(X^i_T = \sum_{j = 1}^N \bf{1}_{\tau_j \leq T} \text{ for some } i \in [N],\, \varrho < T\biggr) = 0,
\end{equation}
since the subprobability distribution of $X^i_T = (X^i_T - X^i_{\varrho}) + X^i_{\varrho}$ on the event $\{\varrho < T\}$ has a density with respect to the Lebesgue measure and the set of elements of the form $\sum_{j = 1}^N D_{ij} w_j$, for $w = (w_1, \dots, w_N) \in \{0, 1\}^N$, is finite. Thus, we may in fact assume that $\varrho_N > t$, in which case $T$ lies in one of the intervals $[\varrho_n, \varrho_{n + 1})$ for $n \in \{0, \dots, N - 1\}$. Hence, the left-continuity at $T$ follows from the continuity of $Y^i$ on those intervals. Thus, $Y^i$ is continuous on all of $[\varrho, T]$.

Next, we show that $Z^i \in \bb{H}^{2, N}_{\varrho, T}$ and that $Y^i_t = Y^i_{\varrho} + \int_{\varrho}^t Z^i_s \cdot \d \bf{W}_s$ holds for $t \in [\varrho, T]$. For $n \in \{0, \dots, N - 1\}$ on $\{\varrho_n < \varrho_{n + 1}\}$, we apply It\^o's formula for $t \in [\varrho_n, \varrho_{n + 1} \land T)$, whereby
\begin{align} \label{eq:martingale_pre}
    Y^i_t &= v^{\cal{I}_N, i}_t\bigl(\bf{X}^{\cal{I}_N}_t\bigr) \notag \\
    &= v^{\cal{I}_N, i}_{\varrho_n}\bigl(\bf{X}^{\cal{I}_N}_{\varrho_n}\bigr) + \int_{\varrho_n}^t \biggl(\partial_s v^{\cal{I}_N, i}_s\bigl(\bf{X}^{\cal{I}_N}_s\bigr) + \frac{\sigma^2}{2} \Delta v^{\cal{I}_N, i}_s\bigl(\bf{X}^{\cal{I}_N}_s\bigr)\biggr) \, \d s \notag \\
    &\ \ \ + \sum_{j \in \cal{I}_N}\int_{\varrho_n}^t \sigma \partial_{x_j} v^{\cal{I}_N, i}_s\bigl(\bf{X}^{\cal{I}_N}_s\bigr) \, \d W^j_s \notag \\
    &= Y^i_{\varrho_n} - \sum_{j \in \cal{I}_N} \int_{\varrho_n}^t Z^{ij}_s \, \d W^j_s.
\end{align}
Note that the finite variation term in the second line above vanishes, since on $\{\varrho_n < \varrho_{n + 1}\}$, the function $v^{\cal{I}_N, i}$ solves the heat equation in the domain $\cal{D}^{\cal{I}_n}v$ and $(s, \bf{X}^{\cal{I}_N}_s)$ lies in this domain for $s \in [\varrho_n, t)$. Now, define
\begin{equation*}
    \varrho_{n + 1, \epsilon} = \inf\biggl\{t \in [\varrho_n, T] \define X^k_t \leq \sum_{j = 1}^N D_{kj} v^{\cal{I}_n \setminus k, j}_t\bigl(\bf{X}^{\cal{I}_n \setminus k}_t\bigr) + \epsilon \text{ for some } k \in \cal{I}_n\biggr\}
\end{equation*}
for $\epsilon > 0$. Then we deduce from the continuity of $v^{J, j}$ on $[0, T) \times \R^J$ for $j \in [N]$ and $J \in \P_N$ that $\varrho_{n + 1, \epsilon} < \varrho_{n + 1}$, $\lim_{\epsilon \to 0} \varrho_{n + 1, \epsilon} = \varrho_{n + 1}$, and $(\varrho_{n + 1, \epsilon}, \bf{X}^{\cal{I}_N}_{\varrho_{n + 1, \epsilon}}) \in \cal{D}^{\cal{I}_n}v$ on $\{\varrho_{n + 1} < \infty\}$. Consequently, evaluating \eqref{eq:martingale_pre} at $t = \varrho_{n + 1, \epsilon} \land T$, we deduce from It\^o's isometry that
\begin{align*}
    \ev\bigl\lvert Y^i_{\varrho_{n + 1} \land T} - Y^i_{\varrho_n \land T}\bigr\rvert^2 &= \lim_{\epsilon \to 0} \ev\bigl\lvert Y^i_{\varrho_{n + 1, \epsilon} \land T} - Y^i_{\varrho_n \land T}\bigr\rvert^2 \\
    &= \lim_{\epsilon \to 0} \ev\biggl[\int_{\varrho_n \land T}^{\varrho_{n + 1, \epsilon} \land T} \lvert Z^i_s\rvert^2 \, \d s\biggr] \\
    &= \ev\biggl[\int_{\varrho_n \land T}^{\varrho_{n + 1} \land T} \lvert Z^i_s\rvert^2 \, \d s\biggr].
\end{align*}
Here we applied the dominated convergence theorem in the first step and the monotone convergence theorem in the last equality. Summing the above equality over $n \in \{0, \dots, N - 1\}$ shows that $\ev \int_{\varrho}^{\varrho_N \land T} \lvert Z^i_s\rvert^2 \, \d s = \ev \int_{\varrho}^T \lvert Z^i_s\rvert^2 \, \d s$ is finite. Thus, $Z^i \in \bb{H}^{2, N}_{\varrho, T}$. Moreover from \eqref{eq:martingale_pre} and the continuity of $Y^i$, we deduce that $Y^i_t = Y^i_{\varrho} + \int_{\varrho}^t Z^i_s \cdot \d \bf{W}_s$ for $t \in [\varrho, T]$. 

Lastly, to verify the terminal condition of FBSDE \eqref{eq:fbsde_hetero}, note that by \eqref{eq:in_domain}, the terminal condition of PDE \eqref{eq:pde_moving_boundary}, and the fact that $v^{I, i}_t(x) = 1$ for $(t, x) \in [0, T] \times \R^I$ whenever $i \notin I \in \P_N$, we have
\begin{equation*}
    Y^i_T = v^{\bf{I}_T}_T\bigl(\bf{X}^{\bf{I}_T}_T\bigr) = \bf{1}_{\{i \notin \bf{I}_T\}}.
\end{equation*}
However, it clearly holds that $i \notin \bf{I}_T$ if and only if $\tau_i \leq T$, so that $Y^i_T = \bf{1}_{\{\tau_i \leq T\}}$. This completes the proof.
\end{proof}

The solution constructed in \eqref{eq:y_and_z_decoupling}, which we verified in Theorem \ref{thm:verification}, satisfies the following flow property. Note that this flow property is analogous to the one derived for the maximal solution to FBSDE \eqref{eq:fbsde_hetero} in Corollary \ref{cor:flow_property}.

\begin{proposition} \label{prop:fp}
Let $D_k = (\varrho_k, (\xi^k_i)_{i \in [N]}, \chi_k)$, $k = 1$, $2$, be initial data such that $\varrho_1 \leq \varrho_2$, $\xi^2_i = X^{D_1, i}_{\varrho_2}$ for $i \in [N]$, and $\chi_2 = \bf{I}^{D_1}_{\varrho_2}$. Then a.s.\@
\begin{equation*}
    \bigl(X^{D_1, i}_t, Y^{D_1, i}_t, Z^{D_1, i}_t\bigr)_{i \in [N]} = \bigl(X^{D_2, i}_t, Y^{D_2, i}_t, Z^{D_2, i}_t\bigr)_{i \in [N]}
\end{equation*}
for all $t \in [\varrho_2, T]$ and $i \in [N]$.
\end{proposition}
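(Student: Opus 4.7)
The plan is to reduce the claimed equality to matching (i) the forward particle processes and (ii) the running alive-index set $\bf{I}$ on $[\varrho_2, T]$, since $Y^{D_k, i}_t$ and $Z^{D_k, i}_t$ are defined purely as pointwise evaluations of the common decoupling field $v$ and its gradient via \eqref{eq:y_and_z_decoupling}.

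For (i), I would observe that for $t \in [\varrho_2, T]$,
\[
X^{D_2, i}_t = \xi^2_i + \sigma(W^i_t - W^i_{\varrho_2}) = X^{D_1, i}_{\varrho_2} + \sigma(W^i_t - W^i_{\varrho_2}) = X^{D_1, i}_t,
\]
using the hypothesis $\xi^2_i = X^{D_1, i}_{\varrho_2}$ together with the telescoping of Brownian increments. In particular, $\bf{X}^{D_1}_t = \bf{X}^{D_2}_t$ on $[\varrho_2, T]$.

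For (ii), set $m_0 = N - \lvert \chi_2 \rvert$. On the $D_1$ side, the assumption $\bf{I}^{D_1}_{\varrho_2} = \chi_2$ forces $\cal{I}^{D_1}_{m_0} = \chi_2$ and $\varrho^{D_1}_{m_0} \leq \varrho_2 < \varrho^{D_1}_{m_0+1}$, the strict inequality on the right coming from $(\varrho_2, \bf{X}^{D_1, \chi_2}_{\varrho_2}) \in \cal{D}^{\chi_2}_T v$ guaranteed by \eqref{eq:in_domain}. On the $D_2$ side, the first $m_0$ iterations of the construction are pre-removal steps executed at the frozen time $\varrho^{D_2}_k = \varrho_2$, stripping off the indices in $[N] \setminus \chi_2$ in increasing order, so that $\cal{I}^{D_2}_{m_0} = \chi_2$ and $\varrho^{D_2}_{m_0} = \varrho_2$. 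From this synchronised configuration, the next killing time in \eqref{eq:killing_times_pde_solution} is defined by exactly the same infimum for both data sets, since the current alive index set $\chi_2$, the forward trajectories on $[\varrho_2, T]$, and the decoupling field $v$ all agree. Hence $\varrho^{D_1}_{m_0+1} = \varrho^{D_2}_{m_0+1}$, and the minimal index achieving the hitting condition coincides, giving $\cal{I}^{D_1}_{m_0+1} = \cal{I}^{D_2}_{m_0+1}$. A straightforward induction on the level $n \geq m_0$ then propagates the agreement to all remaining killing times and index sets, yielding $\bf{I}^{D_1}_t = \bf{I}^{D_2}_t$ for every $t \in [\varrho_2, T]$.

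With (i) and (ii) in hand, substituting $\bf{X}^{D_1}_t = \bf{X}^{D_2}_t$ and $\bf{I}^{D_1}_t = \bf{I}^{D_2}_t$ into \eqref{eq:y_and_z_decoupling} delivers $Y^{D_1, i}_t = Y^{D_2, i}_t$ and $Z^{D_1, i}_t = Z^{D_2, i}_t$ on $[\varrho_2, T]$ for every $i \in [N]$, including the trivial case $i \in [N] \setminus \chi_2$, where both sides reduce to $1$ and $0$ respectively in view of the convention $v^{I, i} \equiv 1$ for $i \notin I$. The only mildly delicate point is keeping track of the index shift introduced by the $D_2$ pre-removal phase, but this poses no genuine analytical obstacle: the entire construction in \eqref{eq:forward_part}--\eqref{eq:y_and_z_decoupling} is deterministic given the Brownian path and the common decoupling field, and the proof is essentially a bookkeeping exercise.
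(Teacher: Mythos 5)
Your proposal is correct and follows essentially the same route as the paper's argument, but you flesh out the bookkeeping that the paper dismisses with ``it is easy to verify that from $\varrho_2$ onwards, the construction \ldots\ is completely identical.'' You correctly identify that matching $\bf{X}$ on $[\varrho_2, T]$ (immediate from $\xi^2_i = X^{D_1,i}_{\varrho_2}$) and matching the alive-index processes $\bf{I}^{D_1}$, $\bf{I}^{D_2}$ on $[\varrho_2, T]$ suffices, and the synchronisation argument at level $m_0 = N - \lvert\chi_2\rvert$---using that $\cal{I}^{D_1}_{m_0} = \chi_2 = \cal{I}^{D_2}_{m_0}$, $\varrho^{D_2}_{m_0} = \varrho_2$, and $\varrho^{D_1}_{m_0+1} > \varrho_2$ by \eqref{eq:in_domain}---is exactly the right content, after which the induction propagates. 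One very minor stylistic divergence: the paper gets $Z^{D_1,i} = Z^{D_2,i}$ as a corollary of $Y^{D_1,i} = Y^{D_2,i}$ via uniqueness in the martingale representation theorem, whereas you read it off directly from the second formula in \eqref{eq:y_and_z_decoupling}; both are valid, and yours is arguably more in the spirit of the decoupling-field construction. You also implicitly absorb the paper's separate case $\varrho_2 \geq \varrho^{D_1}_N$ (where $\chi_2 = \varnothing$, $m_0 = N$) into the general induction, which is fine.
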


\begin{proof}
Let $\varrho^k_n$, $n = 0$,~\ldots, $N$, denote the hitting times defined in and above \eqref{eq:killing_times_pde_solution} for the solution $(X^{D_k, i}, Y^{D_k, i}, Z^{D_k, i})_{i \in [N]}$. We only have to show that
\begin{equation*}
    Y^{D_1, i}_t = Y^{D_2, i}_t
\end{equation*}
for all $t \in [\varrho_2, T]$ and $i \in [N]$, since the identity $Z^{D_1, i}_t = Z^{D_2, i}_t$ then follows from the uniqueness part of the martingale representation theorem. Now, on $\{\varrho_1 = \varrho_2\}$, the equality $Y^{D_1, i}_t = Y^{D_2, i}_t$ follows immediately from the construction in \eqref{eq:y_and_z_decoupling}. On $\{\varrho_1 < \varrho_2\}$, we can either find $n \in \{0 ,\dots, N - 1\}$ such that $\varrho_2 \in [\varrho^1_n, \varrho^1_{n + 1})$ or $\varrho_2 \in [\varrho^1_N, T)$. In the latter case, $\chi_2 = \bf{I}^{D_1}_{\varrho_2} = \varnothing$, so that
\begin{equation*}
    Y^{D_1, i}_t = 1 = Y^{D_2, i}_t
\end{equation*}
for all $t \in [\varrho_2, T]$ and $i \in [N]$. Hence, let us assume that $\varrho_2 \in [\varrho^1_n, \varrho^1_{n + 1})$ for some $n \in \{0 ,\dots, N - 1\}$. Then it is easy to verify that from $\varrho_2$ onwards, the construction of $Y^{D_1, i}_t$ and $Y^{D_2, i}_t$ for $i \in [N]$ is completely identical, so these processes must coincide. This concludes the proof.
\end{proof}

\section{Uniqueness for FBSDE \texorpdfstring{\eqref{eq:fbsde_hetero}}{(EQ)}} \label{sec:uniqueness_fbsde}

Building on the results from the previous two section, we can finally prove the uniqueness of FBSDE \eqref{eq:fbsde_hetero}. We only require the following auxiliary result. Despite its simplicity, it turns out to be crucial for the proof of the uniqueness theorem.

\begin{lemma} \label{lem:hitting_time}
Let $S > 0$ and $f^i_{\pm} \define [0, S] \to \R$, $i \in [n]$, be continuous functions such that $f^i_-(0) \leq 0 < f^i_+(0)$. Next, define
\begin{equation*}
    \tau_{\pm} = \inf\Bigl\{t \in [0, S] \define W^i_t = f^i_{\pm}(t) \textup{ for some } i \in [n]\Bigr\}.
\end{equation*}
Then $\pr(\tau_- < \tau_+) > 0$.
\end{lemma}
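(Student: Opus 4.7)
The plan is to construct a positive-probability event on which one of the Brownian motions is forced to cross its lower boundary $f^i_-$ strictly before any $W^j$ can approach its upper boundary $f^j_+$. Since each $f^j_+$ is continuous on $[0, S]$ and $f^j_+(0) > 0$, I can choose $\delta \in (0, S]$ and $\epsilon > 0$ such that $f^j_+(t) \geq \epsilon$ for all $t \in [0, \delta]$ and $j \in [n]$. The functions $f^j_-$ are continuous on $[0, S]$ and therefore bounded, so I may fix $M > 0$ with $f^j_-(t) \geq -M$ for all $t \in [0, S]$ and $j \in [n]$.

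Next, I would single out the first coordinate and define
\begin{equation*}
    A_1 = \Bigl\{\sup_{t \in [0, \delta]} W^1_t < \epsilon, \; \inf_{t \in [0, \delta]} W^1_t < -M - 1\Bigr\}, \qquad A_i = \Bigl\{\sup_{t \in [0, \delta]} \lvert W^i_t\rvert < \epsilon\Bigr\} \text{ for } i \in \{2, \dots, n\},
\end{equation*}
and $A = \bigcap_{i = 1}^n A_i$. By the independence of the coordinates of $(W^1, \dots, W^n)$ we have $\pr(A) = \prod_i \pr(A_i)$. Each factor is strictly positive: for $i \geq 2$ this is a standard Brownian continuity estimate, and $\pr(A_1) > 0$ follows from the positive probability that $W^1$ approximates, in the sup norm on $[0, \delta]$, any continuous function $g$ with $g(0) = 0$, $\sup g < \epsilon/2$, and $\inf g < -M - 2$.

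Finally, I would verify $\tau_- \leq \delta < \tau_+$ on $A$. By compactness and path continuity there exists $s \in [0, \delta]$ with $W^1_s < -M - 1$; since $f^1_-(s) \geq -M$, this gives $W^1_s - f^1_-(s) < 0$, while $W^1_0 - f^1_-(0) = -f^1_-(0) \geq 0$. Applying the intermediate value theorem to $t \mapsto W^1_t - f^1_-(t)$ on $[0, s]$ yields $r \in [0, s]$ with $W^1_r = f^1_-(r)$, so $\tau_- \leq r \leq \delta$. Conversely, for every $j \in [n]$ and $t \in [0, \delta]$ the events $A_i$ give $W^j_t < \epsilon \leq f^j_+(t)$; as $\{t \in [0, S] \define W^j_t = f^j_+(t) \text{ for some } j\}$ is closed by continuity and disjoint from $[0, \delta]$, we obtain $\tau_+ > \delta \geq \tau_-$. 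I do not foresee a substantive obstacle: the only subtle point is arranging two-sided control on $W^1$ within $A_1$ so that the construction delivers the strict separation $\tau_- < \tau_+$ rather than merely $\tau_- \leq \tau_+$.
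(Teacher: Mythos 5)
Your proof is correct and follows the same essential strategy as the paper: reduce to constant barriers over a short initial time window (using continuity of the $f^i_\pm$), and then use independence of the coordinate Brownian motions to produce a positive-probability event on which the lower crossing must occur strictly before the upper one. The only cosmetic difference is that you single out $W^1$ to make the downward excursion while confining $W^2, \dots, W^n$ near $0$, whereas the paper treats the coordinates symmetrically by introducing constant-barrier hitting times $\tilde{\tau}_\pm$ on $[0, t_0]$, showing the inclusion $\{\tilde{\tau}_- < \tilde{\tau}_+\} \subset \{\tau_- < \tau_+\}$, and then asserting $\pr(\tilde{\tau}_- < \tilde{\tau}_+) > 0$ without spelling out the support-theorem argument that you make explicit.
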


\begin{proof}
Set $b_{\pm} = \frac{1}{2} \min_{i \in [n]} f^i_{\pm}(0)$ and let  $t_0$ be the infimum over all times $t \in [0, S]$ such that $\min_{i \in [n]} f^i_-(t) \leq b_-$ or $\min_{i \in [n]} f^i_+(t) \leq b_+$. Next, we set
\begin{equation*}
    \tilde{\tau}_{\pm} = \inf\Bigl\{t \in [0, t_0] \define W^i_t = b_{\pm} \text{ for some } i \in [n]\Bigr\}.
\end{equation*}
Then it follows that $\pr(\tilde{\tau}_- < \tilde{\tau}_+) \leq \pr(\tau_- < \tau_+)$. However, we clearly have that $\pr(\tilde{\tau}_- < \tilde{\tau}_+)$ is positive, which then implies $\pr(\tau_- < \tau_+) > 0$.
\end{proof}

The uniqueness proof proceeds by inductively linking an arbitrary solution to the solution constructed from the decoupling field in Section \ref{sec:verification}, moving backwards from the final time. The main challenge lies in verifying that if we start an arbitrary solution and the solution based on the decoupling field from the same initial data, then the first killing occurs at the same time in both systems. This is where Lemma \ref{lem:hitting_time} comes in handy.

\begin{theorem} \label{thm:uniqueness}
FBSDE \eqref{eq:fbsde_hetero} has a unique solution for any initial data $(\varrho, (\xi_i)_{i \in [N]}, \chi)$.
\end{theorem}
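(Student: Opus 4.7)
The plan is to induct on the cardinality $k=\lvert\chi\rvert$ of the initially alive particle set, with base case $k=0$ (trivial, since $Y^i\equiv 1$ and $Z^i\equiv 0$) and inductive hypothesis asserting uniqueness for initial data with fewer than $k$ alive particles. For the inductive step, compare the PDE-based solution $(X^i,Y^i,Z^i)_{i\in[N]}$ from Theorem~\ref{thm:verification} with any other solution $(\tilde X^i,\tilde Y^i,\tilde Z^i)_{i\in[N]}$ sharing the initial data $(\varrho,(\xi_i),\chi)$. The forward parts agree automatically, so it suffices to prove equality of the backward parts; the $Z^i$ then coincide by the uniqueness part of the martingale representation theorem. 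The essential step is to show that the first absorption times $\varrho_1$ and $\tilde\varrho_1$ coincide almost surely and absorb the same particle. Once this is in place, Proposition~\ref{prop:fp} (together with the elementary restart property for arbitrary solutions) allows reinitialising both systems at the common time $\varrho_1=\tilde\varrho_1$ with $k-1$ alive particles, where the induction hypothesis applies; the equality $Y^i_T=\tilde Y^i_T$ then follows, and the martingale property extends it to all of $[\varrho,T]$.

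Establishing $\varrho_1=\tilde\varrho_1$ is the main obstacle. I argue by contradiction: assume $\pr(\varrho_1<\tilde\varrho_1)>0$ (the reverse case is symmetric). Decomposing this event according to the identity $i_0$ of the particle absorbed at $\varrho_1$ in the PDE solution and the size of the gap $\tilde\varrho_1-\varrho_1$, one can fix $i_0\in[N]$ and $\eta>0$ such that
\begin{equation*}
    A_\eta := \bigl\{i_0 \text{ absorbed at }\varrho_1\text{ in PDE}\bigr\} \cap \bigl\{\varrho_1\leq T-\eta\bigr\} \cap \bigl\{\tilde\varrho_1\geq\varrho_1+\eta\bigr\}
\end{equation*}
has positive probability. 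On $A_\eta$, no particle is absorbed in the tilde system on $[\varrho,\varrho_1+\eta]$, so each tilde threshold $\sum_j D_{ij}\tilde Y^j_t$ is nonnegative on this interval. Consequently, any sub-event of $A_\eta$ on which $X^{i_0}_t\leq 0$ at some $t\in[\varrho_1,\varrho_1+\eta]$ would force $\tilde\tau_{i_0}\leq t<\varrho_1+\eta$, contradicting the definition of $A_\eta$. It thus remains to exhibit such a sub-event of positive probability.

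This is where Lemma~\ref{lem:hitting_time} enters. On the $\F_{\varrho_1}$-measurable overapproximation $\hat A_\eta := \bigl\{i_0 \text{ absorbed at }\varrho_1\text{ in PDE}\bigr\}\cap\{\varrho_1\leq T-\eta\}$, the identity $X^{i_0}_{\varrho_1}=\sum_j D_{i_0j}Y^j_{\varrho_1}\leq D:=\max_i\sum_j D_{ij}$ provides a deterministic upper bound on $X^{i_0}_{\varrho_1}$; by the strong Markov property, the continuation $(W^i_{\varrho_1+s}-W^i_{\varrho_1})_{s\in[0,\eta]}$ is a standard $N$-dimensional Brownian motion independent of $\F_{\varrho_1}$. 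Applying Lemma~\ref{lem:hitting_time} to this continuation, with the lower barrier for the $i_0$-th coordinate placed so that its crossing forces $X^{i_0}\leq 0$ and upper barriers for the remaining coordinates placed so high that they can only be reached after an intervening tilde absorption (which would itself trigger the contradiction), produces a positive-probability event on which the lower barrier is crossed first. The main subtlety is precisely that $A_\eta\subsetneq\hat A_\eta$ and $A_\eta$ itself is not $\F_{\varrho_1}$-measurable, since $\tilde\varrho_1$ depends on the continuation Brownian motion through the tilde solution; Lemma~\ref{lem:hitting_time} bypasses this measurability gap by reducing the question to Brownian first-passage probabilities, which are strictly positive regardless of the random but continuous tilde barriers. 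Identification of the first-absorbed particle then follows by the same scheme applied to each candidate, and the flow property combined with the induction hypothesis closes the argument.
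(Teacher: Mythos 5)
Your overall strategy—induct, compare the arbitrary solution with the PDE-based one via the flow property, and use Lemma \ref{lem:hitting_time} to force a contradiction if the first absorption times disagree—matches the paper in spirit. However, the crucial step of showing $\pr(\varrho_1<\tilde\varrho_1)=0$ has a genuine gap.

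The difficulty is exactly the one you flag and then wave away: the event $A_\eta$ is \emph{not} $\F_{\varrho_1}$-measurable, and no direct conditioning on $\F_{\varrho_1}$ can establish $\pr(A_\eta\cap\{X^{i_0}\text{ hits }0\})>0$. Indeed, this intersection is empty \emph{by construction}—on $A_\eta$ the tilde thresholds are nonnegative on $[\varrho_1,\varrho_1+\eta]$, so $X^{i_0}$ cannot cross zero there. The proof-by-contradiction succeeds only if one can show this empty set has positive probability, and your route through the overapproximation $\hat A_\eta$ does not do that: one gets $\pr\bigl(\hat A_\eta\cap\{\text{$W^{i_0}$ drops far enough}\}\bigr)>0$, but the ``bad'' Brownian behaviour may be entirely confined to $\hat A_\eta\setminus A_\eta=\hat A_\eta\cap\{\tilde\varrho_1<\varrho_1+\eta\}$, where no contradiction arises. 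The tilde thresholds $\sum_j D_{ij}\tilde Y^j_t$ depend on the full Brownian path including $W^{i_0}$ after $\varrho_1$, so there is no coordinate split that makes the upper barriers in your Lemma \ref{lem:hitting_time} application conditionally independent of the coordinate you want to push down; ``upper barriers placed so high that they can only be reached after an intervening tilde absorption'' does not correspond to a well-defined deterministic (or $\F_{\varrho_1}$-measurably random) barrier as the lemma requires.

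The paper circumvents this by two moves you are missing. First, it establishes the ordering $\varrho_n\leq\tilde\varrho_n$ outright from the induction hypothesis (so ``the reverse case'' never needs a separate argument; your symmetry claim would require extra justification). Second, it does \emph{not} work directly at $\varrho_n$ but passes to a later stopping time $\tau<\tilde\varrho_n$ (with positive probability) at which the coordinates split into a ``dead in PDE'' set $\cal{D}$ and an ``alive'' set $\cal{A}$ with strict $\epsilon$-separation from the \emph{PDE-level} boundary. The barriers fed into Lemma \ref{lem:hitting_time} are then built from $v^{\cal{A},j}$, which depend only on $\cal{A}$-coordinates; conditionally on $\F_\tau$, the $\cal{D}$-coordinates are independent of those barriers and of the stopping time $\varsigma$ governing the validity window. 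Together with the fact that $\{\tau<\tilde\varrho_n\}\in\F_\tau$, this factorises as $\pr(\tau_-<\tau_+,\,\tau<\tilde\varrho_n)=\ev[\pr(\tau_-<\tau_+\vert\F_\tau)\bf{1}_{\{\tau<\tilde\varrho_n\}}]>0$, and a separate pathwise argument (using Lemmas \ref{lem:mon_well} and \ref{lem:mult_default}) shows $\tau+(\tau_+\wedge\varsigma)\leq\tilde\varrho_n$ on $\{\tau<\tilde\varrho_n\}$, producing the contradiction. Your proposal lacks both the passage to $\tau$ with $\epsilon$-separation and the replacement of the tilde barriers by PDE barriers depending only on the complementary coordinates, and without these the required positivity cannot be extracted.

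Finally, the induction variable in the paper is the running absorption count $n$ within a single trajectory (with base case $n=N$), not the cardinality of $\chi$; your version would additionally have to handle the fact that $\chi$ is a random subset that may have different cardinality on different events, and that the restart data $(\varrho_1\wedge T,(X^i_{\varrho_1\wedge T})_i,\cdot)$ is itself random. The paper's formulation sidesteps this cleanly.
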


\begin{proof}
Let $(X^i, \tilde{Y}^i, \tilde{Z}^i)_{i \in [N]}$ be a solution to FBSDE \eqref{eq:fbsde_hetero} with initial data $(\varrho, (\xi_i)_{i \in [N]}, \chi)$. Similarly to above Theorem \ref{thm:verification}, let us define a sequence of hitting times and random index sets. We set $\tilde{\varrho}_0 = \varrho$ and $\tilde{\cal{I}}_0 = [N]$. Next, assuming $\tilde{\varrho}_n$ and $\tilde{\cal{I}}_n$ are given for some $n \in \{0, \dots, N - 1\}$, we set $\tilde{\varrho}_{n + 1} = \varrho$ and $\tilde{\cal{I}}_{n + 1} = \tilde{\cal{I}}_n \setminus i_{n + 1}$ if $n + 1 \leq N - \lvert \chi\rvert$, where $i_{n + 1}$ is the smallest index in $\tilde{\cal{I}}_n \setminus \chi$. Otherwise, we define
\begin{equation}
    \tilde{\varrho}_{n + 1} = \inf\biggl\{t \in [\tilde{\varrho}_n, T] \define X^i_t \leq \sum_{j = 1}^N D_{ij} \tilde{Y}^j_t \text{ for some } i \in \tilde{\cal{I}}_n \biggr\}.
\end{equation}
If $\tilde{\varrho}_n \leq T$, we set $\tilde{\cal{I}}_{n + 1} = \tilde{\cal{I}}_n \setminus i_{n + 1}$, where $i_{n + 1}$ is the smallest index $i \in \tilde{\cal{I}}_n$ with
\begin{equation*}
    X^i_{\tilde{\varrho}_{n + 1}} \leq \sum_{j = 1}^N D_{ij} \tilde{Y}^j_{\tilde{\varrho}_{n + 1}}.
\end{equation*}
Otherwise, we set $\tilde{\cal{I}}_{n + 1} = \tilde{\cal{I}}_n$. Lastly, define the index set $\tilde{\bf{I}}_t = \{i \in [N] \define \tau_i > t\}$ for $t \in [\varrho, T]$. 

Next, for $n \in \{0, \dots, N\}$, we let $(X^i, Y^{(n), i}, Z^{(n), i})_{i \in [N]}$ denote the solution to FBSDE \eqref{eq:fbsde_hetero} constructed above Theorem \ref{thm:verification} from the unique solution $v$ of PDE \eqref{eq:pde_moving_boundary} with initial data
\begin{equation*}
    \bigl(\tilde{\varrho}_n \land T, (X^i_{\tilde{\varrho}_n \land T})_{i \in [N]}, \tilde{\cal{I}}_n\bigr).
\end{equation*}
Note that on $\{\tilde{\varrho}_n < \infty\}$, we have that $\lvert \tilde{\cal{I}}_n\rvert = N - n$. Denote the corresponding sequential killing times and random index sets defined in and above \eqref{eq:killing_times_pde_solution} by $\varrho^n_k$ and $\cal{I}^n_k$, $k \in \{0, \dots, N\}$, the particle killing times defined in \eqref{eq:particle_killing_pde_sol} by $\tau^n_i$, $i \in [N]$, and define the time-varying index set $\bf{I}^n_t = \{i \in [N] \define \tau^n_i > t\}$ for $t \in [\tilde{\varrho}_n \land T, T]$. Since $\lvert \tilde{\cal{I}}_n\rvert = N - n$ when $\tilde{\varrho}_n < \infty$, it follows from the construction of $(X^i, Y^{(n), i}, Z^{(n), i})_{i \in [N]}$ that $\varrho^n_k = \tilde{\varrho}_n$ on $\{\tilde{\varrho}_n < \infty\}$ for $k \in \{0, \dots, n\}$. Our goal is to inductively show that for all $n \in \{0, \dots, N\}$, we have
\begin{equation*}
    \tilde{Y}^i_t = Y^{(n), i}_t
\end{equation*}
for $t \in [\tilde{\varrho}_n, T]$ and $i \in [N]$, where we recall the convention $[\infty, T] = \varnothing$. Since $\tilde{\varrho}_0 = \varrho$ and $\tilde{\cal{I}}_0 = \chi$, applying this result for $n = 0$ implies that $\tilde{Y}^i_t = Y^{(0), i}_t$ for $t \in [\varrho, T]$, which uniquely determines the process $\tilde{Y}^i$ for $i \in [N]$. The uniqueness part of the martingale representation theorem then implies that $\tilde{Z}^i$, $i \in [N]$, is uniquely determined, so that the solution $(X^i, \tilde{Y}^i, \tilde{Z}^i)_{i \in [N]}$ is unique. 

\textit{Induction:} Let $n = N$. If $\tilde{\varrho}_N = \infty$, there is nothing to show. If $\tilde{\varrho}_N < \infty$, then $\tilde{\cal{I}}_N = \varnothing$, so that $\tilde{Y}^i_t = 1 = Y^{(N), i}_t$ for all $t \in [\tilde{\varrho}_N, T]$ and $i \in [N]$. Next, suppose that the induction statement holds for some $n \in [N]$. Let us claim for now without proof that $\tilde{\varrho}_n = \varrho^{n - 1}_n$ and $\tilde{\cal{I}}_n = \cal{I}^{n - 1}_n$. If this is true then on $\{\tilde{\varrho}_n < \infty\}$, we have by Proposition \ref{prop:fp} that $Y^{(n), i}_t = Y^{(n - 1), i}_t$ and by the induction hypothesis that $\tilde{Y}^i_t = Y^{(n), i}_t$ for $t \in [\tilde{\varrho}_n, T]$. Combining these two statements implies that 
\begin{equation*}
    \tilde{Y}^i_t = Y^{(n - 1), i}_t
\end{equation*}
for $t \in [\tilde{\varrho}_n, T]$. Next, on $\{\tilde{\varrho}_{n - 1} < \tilde{\varrho}_n = \infty\}$, it follows from the fact that $\varrho^{n - 1}_n = \tilde{\varrho}_n = \infty$ that neither in the system $(X^i, \tilde{Y}^i, \tilde{Z}^i)_{i \in [N]}$ nor in the system $(X^i, Y^{(n - 1), i}, Z^{(n - 1), i})_{i \in [N]}$ a particle is killed between $\tilde{\varrho}_{n - 1}$ and $T$. Thus, both $\tilde{Y}^i_T$ and $Y^{(n - 1), i}_T$ vanish if $i \in \tilde{\cal{I}}_{n - 1}$ and are equal to one otherwise. Consequently, we showed that on $\{\tilde{\varrho}_{n - 1} < \infty\}$ it holds that
\begin{equation*}
    \tilde{Y}^i_{\tilde{\varrho}_n \land T} = Y^{(n - 1), i}_{\tilde{\varrho}_n \land T}.
\end{equation*}
Now, let $\tau$ be any stopping time such that $\tilde{\varrho}_{n - 1} \leq \tau \leq \tilde{\varrho}_n$. Then, since both $\tilde{Y}^i$ and $Y^{(n - 1), i}$ are martingales and $\{\tilde{\varrho}_{n - 1} < \infty\}$ is $\F_{\tau}$-measurable, we have
\begin{equation*}
    0 = \ev\bigl[\bf{1}_{\{\tilde{\varrho}_{n - 1} < \infty\}} \bigl(\tilde{Y}^i_{\tilde{\varrho}_n \land T} - Y^{(n - 1), i}_{\tilde{\varrho}_n \land T}\bigr) \big\vert \F_{\tau}\big] = \bf{1}_{\{\tilde{\varrho}_{n - 1} < \infty\}} \bigl(\tilde{Y}^i_{\tau \land T} - Y^{(n - 1), i}_{\tau \land T}\bigr).
\end{equation*}
Since $\tau$ was arbitrary with $\tilde{\varrho}_{n - 1} \leq \tau \leq \tilde{\varrho}_n$ and both $\tilde{Y}^i$ and $Y^{(n - 1), i}$ have a.s.\@ continuous trajectories, we deduce that $\tilde{Y}^i_t = Y^{(n - 1), i}_t = 0$ for $t \in [\tilde{\varrho}_{n - 1}, T]$. Here we are implicitly using that $[\tilde{\varrho}_{n - 1}, T]$ is nonempty only on $\{\tilde{\varrho}_{n - 1} < \infty\}$. This concludes the induction step. It remains to establish our earlier claim that $\tilde{\varrho}_n = \varrho^{n - 1}_n$ and $\tilde{\cal{I}}_n = \cal{I}^{n - 1}_n$. This turns out to be the main intricacy of the proof.

\textit{Proof of} $\tilde{\varrho}_n = \varrho^{n - 1}_n$\textit{:} First, let us prove that $\varrho_n \leq \tilde{\varrho}_n$, where we set $\varrho_n = \varrho^{n - 1}_n$ for notational simplicity. If $\tilde{\varrho}_n = \infty$, this is trivial, so let us suppose that $\tilde{\varrho}_n < \infty$. By the induction hypothesis, we have that $\tilde{Y}^i$ coincides with $Y^{(n), i}$ on $[\tilde{\varrho}_n, T]$. Let $i$ be the unique element in $\tilde{\cal{I}}_{n - 1} \setminus \tilde{\cal{I}}_n$. Since the corresponding particle is killed at time $\tilde{\varrho}_n$, it holds that
\begin{equation} \label{eq:dead_at_step_n}
    X^i_{\tilde{\varrho}_n} \leq \sum_{j = 1}^N D_{ij} \tilde{Y}^j_{\tilde{\varrho}_n} = \sum_{j = 1}^N D_{ij} v^{\tilde{\cal{I}}_{n - 1} \setminus i, j}_{\tilde{\varrho}_n}\Bigl(\bf{X}^{\tilde{\cal{I}}_{n - 1} \setminus i}_{\tilde{\varrho}_n}\Bigr).
\end{equation}
However, by definition of $\varrho_n$, we have
\begin{equation*}
    \varrho_n = \inf\biggl\{t \in [\tilde{\varrho}_{n - 1}, T] \define X^k_t \leq \sum_{j = 1}^N D_{kj} v^{\tilde{\cal{I}}_{n - 1} \setminus k, j}_t\Bigl(\bf{X}^{\tilde{\cal{I}}_{n - 1} \setminus k}_t\Bigr) \text{ for some } k \in \tilde{\cal{I}}_{n - 1} \biggr\} \leq \tilde{\varrho}_n.
\end{equation*}
Thus, we can conclude that $\varrho_n \leq \tilde{\varrho}_n$.

To deduce equality between $\varrho_n$ and $\tilde{\varrho}_n$, it remains to show that $\pr(\varrho_n < \tilde{\varrho}_n) = 0$. Suppose that this is not the case. As we demonstrate below, this implies $\inf_{\varrho_n \leq t \leq \tilde{\varrho}_n} X^i_t < 0$ for some $i \in \tilde{\cal{I}}_{n - 1}$ with positive probability. But the latter leads to a contradiction since $\tilde{\varrho}_n$ must occur before the first time that one of the processes $X^i_t$, for $i \in \tilde{\cal{I}}_{n - 1}$, hits the origin. So to proceed, let us establish that $\pr(\varrho_n < \tilde{\varrho}_n) > 0$ implies $\inf_{\varrho_n \leq t \leq \tilde{\varrho}_n} X^i_t < 0$ for some $i \in \tilde{\cal{I}}_{n - 1}$ with positive probability. Note that on $\{\varrho_n < \tilde{\varrho}_n\}$, we have $\tilde{\varrho}_{n - 1} \leq \varrho_n < \tilde{\varrho}_n$, so owing to the continuity of $\bf{X}$ and $(\tilde{Y}^1, \dots, \tilde{Y}^N)$, \eqref{eq:dead_at_step_n} holds with equality. Now, define the random index set $\cal{D}$ to consist of those $i \in \tilde{\cal{I}}_{n - 1}$ such that
\begin{equation*}
    X^i_{\varrho_n} \leq \sum_{j = 1}^N D_{ij} v^{\tilde{\cal{I}}_{n - 1} \setminus i, j}_{\varrho_n}\Bigl(\bf{X}^{\tilde{\cal{I}}_{n - 1} \setminus i}_{\varrho_n}\Bigr)
\end{equation*}
if $\varrho_n < \infty$ and $\cal{D} = \varnothing$ if $\varrho_n = \infty$. Set $\cal{A} = \tilde{\cal{I}}_{n - 1} \setminus \cal{D}$. On $\{\varrho_n < \infty\}$, we have for all $t \in [\varrho_n, T]$ sufficiently close to $\varrho_n$ by the construction of $(X^i, Y^{(n - 1), i}, Z^{(n - 1), i})_{i \in [N]}$ that
\begin{equation*}
    Y^{(n - 1), i}_t = v^{\cal{A}, i}_t\bigl(\bf{X}^{\cal{A}}_t\bigr)
\end{equation*}
for $i \in [N]$. Since the increments $X^i_{\varrho_n + t} - X^i_{\varrho_n}$, $i \in \tilde{\cal{I}}_{n - 1}$, are independent conditional on $\{\varrho_n < \infty\}$ and the function $v^{\cal{A}, i}$ is continuous, it follows that with positive probability there exists $t \in [\varrho_n, \tilde{\varrho}_n)$ such that such that
\begin{equation*}
    X^k_t < \sum_{j = 1}^N D_{kj} v^{\cal{A}, j}_t\bigl(\bf{X}^{\cal{A}}_t\bigr) \quad \text{and} \quad X^{\ell}_t > \sum_{j = 1}^N D_{{\ell}j} v^{\cal{A}, j}_t\bigl(\bf{X}^{\cal{A}}_t\bigr)
\end{equation*}
for all $k \in \cal{D}$ and $\ell \in \cal{A}$. Consequently, letting $\tau$ denote the infimum over all $t \in [\varrho_n, T]$ such that
\begin{equation*}
    X^k_t \leq \sum_{j = 1}^N D_{kj} v^{\cal{A}, j}_t\bigl(\bf{X}^{\cal{A}}_t\bigr) - \epsilon \quad \text{and} \quad X^{\ell}_t \geq \sum_{j = 1}^N D_{{\ell}j} v^{\cal{A}, j}_t\bigl(\bf{X}^{\cal{A}}_t\bigr) + \epsilon
\end{equation*}
for all $k \in \cal{D}$ and $\ell \in \cal{A}$, we have $\pr(\tau < \tilde{\varrho}_n) > 0$ for all sufficiently small $\epsilon > 0$. Next, let us define the processes
\begin{equation*}
    \tilde{\bf{X}}_t = \bf{X}_{(\tau \land T) + t} \quad \text{and} \quad \bf{B}_t = \biggl(\sum_{j = 1}^N D_{ij} v^{\cal{A}, j}_{(\tau \land T) + t}\Bigl(\bf{X}^{\cal{A}}_{(\tau \land T) + t}\Bigr)\biggr)_{i \in [N]}
\end{equation*}
for $t \in [0, (T - \tau)_+]$ as well as the random time $\varsigma$ as the minimum between $(T - \tau)_+$ and the infimum over all $t \in [0, (T - \tau)_+]$ such that $\tilde{\bf{X}}^{\cal{A}}_t \leq \bf{B}^{\cal{A}}_t$. Finally, let us define 
\begin{equation*}
    \tau_- = \inf\bigl\{t \in [0, \varsigma] \define \tilde{\bf{X}}^{\cal{D}}_t \leq -1 \bigr\} \quad \text{and} \quad \tau_+ = \inf\bigl\{t \in [0, \varsigma] \define \tilde{\bf{X}}^{\cal{D}}_t \geq \bf{B}^{\cal{D}}_t\bigr\}.
\end{equation*}
Since \eqref{eq:dead_at_step_n} holds with equality if $\tau < \tilde{\varrho}_n$, we have that $\tau + (\tau_+ \land \varsigma) \leq \tilde{\varrho}_n$ on $\{\tau < \tilde{\varrho}_n\}$. Indeed, suppose otherwise, that $\tilde{\varrho}_n$ occurs before $\tau + (\tau_+ \land \varsigma)$. Then by \eqref{eq:dead_at_step_n}, we have
\begin{equation} \label{eq:dead_at_step_n_equality}
    X^i_{\tilde{\varrho}_n} = \sum_{j = 1}^N D_{ij} v^{\tilde{\cal{I}}_{n - 1} \setminus i, j}_{\tilde{\varrho}_n}\Bigl(\bf{X}^{\tilde{\cal{I}}_{n - 1} \setminus i}_{\tilde{\varrho}_n}\Bigr)
\end{equation}
for some $i \in \tilde{\cal{I}}_{n - 1}$. If $i \in \cal{A}$, it follows from the boundary condition of PDE \eqref{eq:pde_moving_boundary}, Lemma \ref{lem:mon_well} \ref{it:monotonicity}, and the assumption that $\tilde{\varrho}_n < \tau + \varsigma$ that
\begin{equation*}
    X^i_{\tilde{\varrho}_n} = \sum_{j = 1}^N D_{ij} v^{\tilde{\cal{I}}_{n - 1} \setminus i, j}_{\tilde{\varrho}_n}\Bigl(\bf{X}^{\tilde{\cal{I}}_{n - 1} \setminus i}_{\tilde{\varrho}_n}\Bigr) = \sum_{j = 1}^N D_{ij} v^{\tilde{\cal{I}}_{n - 1}, j}_{\tilde{\varrho}_n}\Bigl(\bf{X}^{\tilde{\cal{I}}_{n - 1}}_{\tilde{\varrho}_n}\Bigr) \leq \sum_{j = 1}^N D_{ij} v^{\cal{A}, j}_{\tilde{\varrho}_n}\bigl(\bf{X}^{\cal{A}}_{\tilde{\varrho}_n}\bigr) < X^i_{\tilde{\varrho}_n}.
\end{equation*}
Thus, it should hold that $i \in \cal{D}$. We will show that this cannot be the case either. Indeed, since $\tau + \tau_+$ did not occur before or at $\tilde{\varrho}_n$, we have
\begin{equation} \label{eq:plus_not_occured}
    X^k_{\tilde{\varrho}_n} < \sum_{j = 1}^N D_{kj} v^{\cal{A}, j}_t\bigl(\bf{X}^{\cal{A}}_{\tilde{\varrho}_n}\bigr)
\end{equation}
for all $k \in \cal{D}$. Hence, the only if direction of Lemma \ref{lem:mult_default} implies that $\cal{D} \subset \cal{I}^{\tilde{\cal{I}}_{n - 1}}\bigl(\tilde{\varrho}_n, \bf{X}^{\tilde{\cal{I}}_{n - 1}}_{\tilde{\varrho}_n}\bigr)$. Thus, Equation \eqref{eq:bc_down} from Lemma \ref{lem:mult_default} shows that 
\begin{equation*}
    v^{\tilde{\cal{I}}_{n - 1} \setminus i, j}_{\tilde{\varrho}_n}\Bigl(\bf{X}^{\tilde{\cal{I}}_{n - 1} \setminus i}_{\tilde{\varrho}_n}\Bigr) = v^{\tilde{\cal{I}}_{n - 1}, j}_{\tilde{\varrho}_n}\bigl(\bf{X}^{\tilde{\cal{I}}_{n - 1}}_{\tilde{\varrho}_n}\bigr) = v^{\cal{A}, j}_{\tilde{\varrho}_n}\bigl(\bf{X}^{\cal{A}}_{\tilde{\varrho}_n}\bigr)
\end{equation*}
for $j \in \tilde{\cal{I}}_{n - 1}$. From this and \eqref{eq:dead_at_step_n_equality}, we derive $X^i_{\tilde{\varrho}_n} = v^{\cal{A}, j}_{\tilde{\varrho}_n}\bigl(\bf{X}^{\cal{A}}_{\tilde{\varrho}_n}\bigr)$ in contradiction with \eqref{eq:plus_not_occured}. In conclusion, it must hold that $\tau + (\tau_+ \land \varsigma) \leq \tilde{\varrho}_n$. Thus, if we can show that $\pr(\tau_- < \tau_+,\, \tau < \tilde{\varrho}_n) > 0$, it follows that $\inf_{\varrho_n \leq t \leq \tilde{\varrho}_n} X^k_t \leq -1 < 0$ for some $k \in \cal{D} \subset \tilde{\cal{I}}_{n - 1}$ with positive probability, as desired.

Now, conditional on $\F_{\tau}$, the process $\tilde{\bf{X}}^{\cal{D}}$ is independent of $\bf{B}^{\cal{D}}$ and $\varsigma$. Thus, we can apply Lemma \ref{lem:hitting_time} with the choice $f^i_-(t) = -\sigma^{-1}$ and $f^i_+(t) = \sigma^{-1} \bf{B}^i_t$, $i \in \cal{D}$, and $S = \varsigma$. This implies that $\pr$-a.s.\@ it holds that $\pr(\tau_- < \tau_+ \vert \F_{\tau}) > 0$ if $\tau < T$. Next, as in \eqref{eq:no_killing_at_terminal}, we have that $\pr(\tilde{\varrho}_n = T,\, \varrho < T) = 0$. Consequently, up to a $\pr$-nullset, we have that $\tau < \tilde{\varrho}_n$ implies that $\tau < T$. From this and the above application of Lemma \ref{lem:hitting_time} it follows that
\begin{equation*}
    \pr\bigl(\tau_- < \tau_+,\, \tau < \tilde{\varrho}_n \bigr) = \ev\bigl[\pr(\tau_- < \tau_+ \vert \F_{\tau}) \bf{1}_{\{\tau < \tilde{\varrho_n}\}}\bigr] > 0,
\end{equation*}
where we used in the last equality that $\pr(\tau < \tilde{\varrho_n}) > 0$ by the choice of $\epsilon$ in the definition of $\tau$. This concludes the proof of the first part of the claim, namely that $\tilde{\varrho}_n = \varrho_n$. It remains to show that $\tilde{\cal{I}}_n = \cal{I}^{n - 1}_n$.

\textit{Proof of} $\tilde{\cal{I}}_n = \cal{I}^{n - 1}_n$\textit{:} If $\tilde{\varrho}_n < \infty$, then a particle is removed at time $\tilde{\varrho}_n = \varrho_n$, both from the system $(X^i, \tilde{Y}^i, \tilde{Z}^i)_{i \in [N]}$ and the system $(X^i, Y^{(n - 1), i}, Z^{(n - 1), i})_{i \in [N]}$. Let us denote the corresponding indices by $i_1$ and $i_2 \in \tilde{\cal{I}}_{n - 1}$, respectively. Since $\tilde{\cal{I}}_n = \tilde{\cal{I}}_{n - 1} \setminus i_1$ and $\cal{I}^{n - 1}_n = \tilde{\cal{I}}_{n - 1} \setminus i_2$, it suffices to show that $i_1 = i_2$. At time $\tilde{\varrho}_n$, since $i_1 \in \cal{I}^{\tilde{\cal{I}}_{n - 1}}\bigl(\tilde{\varrho}_n, \bf{X}^{\tilde{\cal{I}}_{n - 1}}_{\tilde{\varrho}_n}\bigr)$, we have by the boundary condition of PDE \eqref{eq:pde_moving_boundary} that
\begin{align*}
    \tilde{Y}^j_{\tilde{\varrho}_n} = Y^{(n), j}_{\tilde{\varrho}_n} = v^{\tilde{\cal{I}}_{n - 1} \setminus i_1, j}_{\tilde{\varrho}_n}\Bigl(\bf{X}^{\tilde{\cal{I}}_{n - 1} \setminus i_1}_{\tilde{\varrho}_n}\Bigr) = v^{\tilde{\cal{I}}_{n - 1}, j}_{\tilde{\varrho}_n}\bigl(\bf{X}^{\tilde{\cal{I}}_{n - 1}}_{\tilde{\varrho}_n}\bigr).
\end{align*}
An analogous argument implies $Y^{(n - 1), j}_{\tilde{\varrho}_n} = v^{\tilde{\cal{I}}_{n - 1}, j}_{\tilde{\varrho}_n}\bigl(\bf{X}^{\tilde{\cal{I}}_{n - 1}}_{\tilde{\varrho}_n}\bigr)$. From this we deduce that
\begin{equation*}
    X^{i_1}_{\tilde{\varrho}_n} \leq \sum_{j = 1}^N D_{i_1j} \tilde{Y}^j_{\tilde{\varrho}_n} = \sum_{j = 1}^N D_{i_1j} Y^{(n - 1), j}_{\tilde{\varrho}_n},
\end{equation*}
so particle $i_1$ is also dead in the system $(X^i, Y^{(n - 1), i}, Z^{(n - 1), i})_{i \in [N]}$. Symmetrically, $i_2$ is dead in the system $(X^i, \tilde{Y}^i, \tilde{Z}^i)_{i \in [N]}$. However, both $i_1$ and $i_2$ are the minimal elements of $\tilde{\cal{I}}_{n - 1}$ with that property, so we conclude that $i_1 = i_2$. This finishes the proof.
\end{proof}

\section{The Mean-Field Limit} \label{sec:mfl}

We conclude the paper by stating some preliminary observations regarding the mean-field limit of FBSDE \eqref{eq:fbsde_hetero} in the symmetric setting $D_{ij} = \frac{\alpha}{N}$ as the number of particles tends to infinity. First, note that the symmetry allows one to write the system in a simplified form. Indeed, if we set
\begin{equation*}
    \bar{Y}^N = \frac{1}{N} \sum_{j = 1}^N Y^i \quad \text{and} \quad \bar{Z}^N = \frac{1}{N} \sum_{j = 1}^N Z^i,
\end{equation*}
then FBSDE \eqref{eq:fbsde_hetero} becomes
\begin{equation} \label{eq:fbsde_avg}
    \d X^i_t = \sigma \, \d W^i_t, \quad 
    \d \bar{Y}^N_t = \bar{Z}^N_t \cdot \d \bf{W}_t
\end{equation}
with $X^i_t = \xi_i$ and $\bar{Y}^N_T = \frac{1}{N} \sum_{j = 1}^N\bf{1}_{\tau_j \leq T}$, where
\begin{equation*}
    \tau_i = \inf\bigl\{t \in [0, T] \define X^i_t \leq \alpha \bar{Y}^N_t\bigr\}.
\end{equation*}
In this way, the $N$ equations for $Y^1$,~\ldots, $Y^N$ in the backward part of FBSDE \eqref{eq:fbsde_hetero} get replaced by a single equation for $\bar{Y}^N$. 

If we suppose that the killing times $\tau_i$ of the particles asymptotically decorrelate, in the sense that
\begin{equation*}
    \pr(\tau_i \leq T,\, \tau_j \leq T) - \pr(\tau_i \leq T) \pr(\tau_j \leq T) \to 0
\end{equation*}
as $N \to \infty$ for $i \neq j$, then the martingale $\int_0^{\cdot} \bar{Z}^N_t \cdot \d \bf{W}_t$ vanishes in the limit. As a result, the limit $Y_t$ of $\bar{Y}^N_t$ is simply given by $Y_t = \pr(\tau \leq T)$ for $t \in [0, T]$, where $\tau$ is the killing time of the representative particle in the mean-field limit. Passing to the limit in the definition of $\tau_i$, suggests that this killing time is given by
\begin{equation*}
    \tau = \inf\bigl\{t \in [0, T] \define X_t \leq \alpha Y_t\bigr\} = \inf\Bigl\{t \in [0, T] \define X_t \leq \alpha \pr(\tau \leq T)\Bigr\},
\end{equation*}
where the state $X_t$ of the representative particle in the mean-field limit follows the dynamics
\begin{equation*}
    \d X_t = \sigma \, \d W_t
\end{equation*}
with $X_0 = \xi$ for a Brownian motion $W$. Hence, the mean-field limit is completely determined by the probability $\pr(\tau \leq T)$, which satisfies the fixed-point equation
\begin{equation} \label{eq:mfl_fpe}
    \pr(\tau \leq T) = \pr\Bigl(\min_{t \in [0, T]} X_t \leq \alpha \pr(\tau \leq T)\Bigr).
\end{equation}
As we shall demonstrate below, this equation may have more than one solution, in contrast to the finite particle for which uniqueness attains.

First, however, let us note that at this stage we have no theoretical support for the supposition that the killing times $\tau_i$ become asymptotically independent. In the absence of asymptotic independence, the limiting behaviour of FBSDE \eqref{eq:fbsde_avg} is much harder to capture. First of all, it is not obvious whether and in what sense the process $\bar{Y}^N$ converges nor if its limit should be continuous. If the limit $Y$ were discontinuous, stating the FBSDE satisfied by $X$ and $Y$ becomes a subtle enterprise, since fluctuations of the process $\bar{Y}^N$ present in the pre-limit system get lost in the jumps of $Y$. Consequently, setting $\tau = \inf\{t \in [0, T] \define X_t \leq \alpha Y_t\}$ would lead to an underestimation of the proportion of absorptions in the mean-field system, i.e.\@
\begin{equation*}
    \pr(\tau \leq T) < \ev[Y_T].
\end{equation*}
Instead, one should ``decorate'' the jumps of $Y$ with the fluctuations stemming from the pre-limit system. That is, each jump time $t \in [0, T]$ comes attached with a random interval $[Y^-_t, Y^+_t]$ including $Y_{t-}$ and $Y_t$ and containing the asymptotic fluctuations. Then the representative particle is killed between $t-$ and $t$ if $X_t \leq \alpha Y^-_t$. Setting $Y^-_t = Y^+_t = Y_t$ for continuity points $t \in [0, T]$ of $Y$, the correct definition of the killing time $\tau$ would then by
\begin{equation*}
    \tau = \inf\bigl\{t \in [0, T] \define X_t \leq \alpha Y^-_t\bigr\}.
\end{equation*}
The fixed-point condition \eqref{eq:mfl_fpe} now becomes
\begin{equation}
    Y_t = \pr(\tau \leq T \vert \F^Y_t),
\end{equation}
where $\bb{F}^Y = (\F^Y_t)_{t \in [0, T]}$ is given by $\F^Y_t = \sigma(Y_s, Y^-_s, Y^+_s \define s \in [0, t])$. This would appear to be a rather complicated system, an analysis of which we leave for future research.

Let us come back to the simpler fixed-point equation \eqref{eq:mfl_fpe} characterising the mean-field limit under asymptotic independence of the killing times. We shall show that in many cases, it admits several solutions. For a given $p \in [0, 1]$, we compute
\begin{align} \label{eq:fp_calcuation}
    \pr\Bigl(\min_{t \in [0, T]} \leq \alpha p\Bigr) &= \pr\biggl(\min_{t \in [0, T]} W_t \leq \frac{\alpha p - \xi}{\sigma}\biggr) \notag \\
    &= \int_{[0, \infty)} \pr\biggl(\min_{t \in [0, T]} W_t \leq \frac{\alpha p - x}{\sigma}\biggr) \, \d \L(\xi)(x) \notag \\
    &= \int_{[0, \infty)} 2\pr\biggl(W_T \leq \frac{(\alpha p - x) \land 0}{\sigma}\biggr) \, \d \L(\xi)(x) \notag \\
    &= \int_{[0, \infty)} 2\pr\biggl(W_1 \leq \frac{(\alpha p - x) \land 0}{\sigma\sqrt{T}}\biggr) \, \d \L(\xi)(x),
\end{align}
where we used the reflection principle in the third equality. Now, let us consider the case $T = 1$, $\sigma = 1$, and $\xi = \alpha$. Regardless of the choice of $\alpha$, $p_+ = 1$ always yields a fixed point of the map
\begin{equation*}
    p \mapsto \pr\Bigl(\min_{t \in [0, T]} \leq \alpha p\Bigr),
\end{equation*}
corresponding to the solution of the mean-field limit with absorption time $\tau_+ = 0$. Next, note that for the specified parameters, the expression on the right-hand side of \eqref{eq:fp_calcuation} becomes $2\pr(W_1 \leq -(1 - p)\alpha)$. Hence, setting $p = \frac{1}{2}$, we can select $\alpha > 0$ such that $\pr(W_1 \leq -\frac{\alpha}{2}) = \frac{1}{4}$. With this choice of $\alpha$, $p_- = \frac{1}{2}$ is another fixed point with associated killing time $\tau_- = \inf\{t > 0 \define X_t \leq \frac{\alpha}{2}\}$.

Note that for the chosen parameters, all particles in the finite system are immediately killed, so that the particle system trivially converges to the maximal solution $p_+ = 1$ of the fixed-point equation \eqref{eq:mfl_fpe}.

\appendix

\printbibliography

\end{document}